\newcommand{\x}{{{x}}}
\newcommand{\xx}{{{\x}^\prime}}
\newcommand{\dist}{{|\x-\xx|}}
\newcommand{\distn}{{|\x-\xx|^{n+2s}}}
\newcommand{\kernel}{\gamma}
\newcommand{\kernelh}{\hat{\gamma}}
\newcommand{\kerneld}{\widetilde{\kernel}_K}
\newcommand{\kerd}{\kernel(\x,\xx;\delta)}
\newcommand{\kerdd}{\kerneld(\x,\xx;\delta)}
\newcommand{\kermu}{\kernel(\x,\xx;\parmu)}
\newcommand{\kernelmax}{C_\gamma^{1}}
\newcommand{\deltapiv}{\delta^{\star}}
\newcommand{\deltamin}{\delta_{\min}}
\newcommand{\deltamax}{\delta_{\max}}
\newcommand{\smin}{s_{\min}}
\newcommand{\smax}{s_{\max}}
\newcommand{\Clow}{C_{1}}
\renewcommand{\Cup}{C_{2}}
\newcommand{\s}{{\hat{s}}}
\newcommand{\parmu}{{{\mu}}}
\newcommand{\heps}{{\hat{\varepsilon}}}
\newcommand{\R}{\mathbb{R}}
\renewcommand{\L}{\mathcal{L}}
\renewcommand{\SS}{\mathbb{S}^{n-1}}
\newcommand{\D}{\Omega}
\newcommand{\DI}{\Omega_{\delta}}
\newcommand{\DD}{{\D\cup\DI}}
\newcommand{\DL}{\widetilde{\Omega}}
\newcommand{\X}{X}
\newcommand{\V}{V}
\newcommand{\Vd}{{\V^{\prime}}}
\newcommand{\XD}{\X}
\newcommand*{\VN}[1]{{\V^{#1}_N}}
\renewcommand{\P}{\mathcal{P}} 
\newcommand{\Ps}{\P^s}
\newcommand*{\HD}[1]{{H^{#1}_\D(\R^n)}}
\newcommand{\VD}[1]{{\V^{#1}_\delta}}
\newcommand{\sconst}[1]{C_{#1}}
\newcommand{\sconstv}[1]{C_{#1}}
\renewcommand{\d}{\mathop{}\!\mathrm{d}}
\newcommand*{\norm}[1]{{\left\lVert#1\right\rVert}}
\newcommand*{\norms}[1]{{\left|{#1}\right|}}
\newcommand*{\dual}[1]{{\langle{#1}\rangle}}
\newcommand{\tr}[1]{{\widetilde{#1}}}
\newcommand{\spann}{\mathrm{span}}
\newcommand{\diam}{\mathrm{diam}}
\newcommand{\ad}{{\widetilde{a}_K}}
\newcommand{\adM}{{\widetilde{a}_{M}}}
\newcommand{\adr}{{\widetilde{a}_{M,\rho}}}
\renewcommand{\a}{{a}}
\numberwithin{equation}{section}
\newtheorem{theorem}{Theorem}[section]
\newtheorem{lemma}[theorem]{Lemma}
\newtheorem{proposition}[theorem]{Proposition}
\newtheorem{remark}{Remark}[section]
\newtheorem{corollary}{Corollary}[section]
\begin{document}
\title[Model order reduction for nonlocal models]{Affine approximation of parametrized kernels and model order reduction for nonlocal and fractional Laplace models}

\author{Olena Burkovska}
\address{
Computational and Applied Mathematics, Oak Ridge National Laboratory,
Oak Ridge, TN 37831, USA; Department of Scientific Computing, Florida State University, 400 
Dirac Science Library, Tallahassee, FL 32306-4120, USA}
\email{burkovskao@ornl.gov, oburkovska@fsu.edu}

\author{Max Gunzburger}
\address{Department of Scientific Computing, Florida State University, 400 
Dirac Science Library, Tallahassee, FL 32306-4120, USA}
\email{mgunzburg@fsu.edu}

\begin{abstract}
We consider parametrized problems driven by spatially nonlocal integral operators with parameter-dependent kernels.
In particular, kernels with varying nonlocal interaction radius $\delta > 0$ and fractional Laplace kernels, parametrized by the fractional power $s\in(0,1)$, are studied.
In order to provide an efficient and reliable approximation of the solution for different values of the parameters, we develop the reduced basis method as a parametric model order reduction approach. 
Major difficulties arise since the kernels are not affine in the parameters, singular, and discontinuous. Moreover, the spatial regularity of the solutions depends on the varying fractional power $s$.
To address this, we derive regularity and differentiability results with respect to $\delta$ and $s$, which are of independent interest for other applications such as optimization and parameter identification. 
We then use these results to construct affine approximations of the kernels by local polynomials. 
Finally, we certify the method by providing reliable a posteriori error estimators, which account for all approximation errors, and support the theoretical findings by numerical experiments.
\end{abstract}

\keywords{
Nonlocal diffusion, fractional Laplacian, affine approximation, parametric regularity, reduced basis method}

\subjclass[2010]{65N15, 35R11, 49K40, 	65N12}

\maketitle

\section{Introduction}

Nonlocal models are a broad category of mathematical models which arise in many areas of science, such as, e.g., solid and fluid mechanics, contact mechanics, subsurface flows, turbulence modeling, image analysis~\cite{gilboa2007}, and finance~\cite{cont2004}. In particular, nonlocal diffusion operators are used to model anomalous diffusion processes and an important example is given by the integral fractional Laplacian. 

While nonlocal models better reflect many physical processes, in many practical applications the precise model parameters are unknown.
In this case, one requires not only to approximate the solution as a function of the spatial variable, but also as a function of the model parameters.
Typically, such evaluations are performed for many instances of the parameter and model order reduction techniques, such as reduced basis methods (RBM), become attractive tools for reducing the computational complexity; see, e.g.,~\cite{hesthaven2015,quarteroni_rbm} and the references therein. 

In this paper we propose and analyze computationally efficient and reliable approximations, based on the reduced basis method, of the parametrized problem involving the nonlocal operator given as
\begin{equation}
-\L(\parmu) 
u(\x):=2\int_{\R^n}(u(\x)-u(\xx))\kermu\d\xx,\quad\x\in\D,
\label{nonl_operator0}
\end{equation}
where the kernel $\kermu$ is a non-negative symmetric function and $\D\subset\R^n$ is a bounded domain. The parameter vector $\parmu\in \P \subset \R^p$ collects the kernel parameters and determines the qualitative nature of the associated problem 
\begin{subequations}
\begin{align}
-\L(\parmu) u(\x;\parmu) &= f(\x;\parmu) &&\text{for } \x \in \D, \\
u(\x,\parmu) &= 0 &&\text{for } \x \in \R^n \setminus \D.
\label{nonl_volumecon0}
\end{align}\label{nonl_problem0}\end{subequations}
Here, $f$ is a data term, which may also depend on the parameters, and the problem is endowed with homogeneous volume constraints on the complement of the domain.

We focus on two cases of particular interest: First, we consider a general class of kernels $\kernel\colon \R^n\times\R^n \to \R$, where the nonlocal operator is given concretely by
\begin{equation}
-\L(\delta) u(\x) = 2\int_{B_\delta(\x)}(u(\x)-u(\xx))\kernel(\x,\xx)\d\xx,
\label{nonl_operator_delta0}
\end{equation}
with $B_{\delta}(\x)$ denoting the Euclidean ball of a radius $\delta>0$ centered at $\x$.
The corresponding problem is parametrized by $\parmu = \delta \in [\deltamin,\deltamax]$ for \(0 < \deltamin < \deltamax < \infty\), which describes the extent of the nonlocal interactions. Here, the parametrized kernel $\kernel(\x,\xx;\delta)$ arises from a truncation of the kernel $\gamma(\x,\xx)$ to the strip where $\dist < \delta$. Problems of this form are analyzed in~\cite{Dunonlocal2012}, and arise, e.g., in the peridynamics model~\cite{silling2000}.
Second, as a particular choice of kernels we also consider (truncated) fractional Laplace type kernels, with the fractional power $s\in(0,1)$ being the parameter in the model, i.e., for $\parmu=s \in [s_{\min},s_{\max}] \subset (0,1)$, we consider kernels of the following form
\begin{equation}
\kernel(\x,\xx;s) = 
\begin{cases}
\displaystyle \frac{c_{n,s}}{2\norms{\x-\xx}^{n+2s}} &\text{for } \dist < \delta, \\
0 &\text{else},
\end{cases}
\label{nonl_kernel_s0}
\end{equation}
where $c_{n,s} = (2^{2s}s\Gamma(s+n/2))/(\pi^{n/2}\Gamma(1-s))$.
In this case, we consider $\delta \in (0,\infty]$ to be a given and fixed quantity. We note that in the case $\delta = \infty$, the problem~\eqref{nonl_problem0} turns into the fractional Laplace equation in integral form with homogeneous volume constraints
\begin{subequations}
\begin{align}
(-\upDelta)^s u(\x;s) &= f(\x) &&\text{for } \x \in \D, \\
u(\x,s) &= 0 &&\text{for } \x \in \R^n \setminus \D,
\end{align}\label{fractional_problem0}\end{subequations}
which is a model of particular relevance; see, e.g., \cite{DELIAlaplacian,acosta2017}.

We point out that the reduced basis method is not a new approach, and its beginning is traced back to structural engineering applications; see, e.g.,~\cite{ALMROTH,noor1980reduced}. 
By now, a strong mathematical theory for the method has been developed and it has been significantly expanded to various applications.
However, the RBM has been mainly developed for local problems, i.e., problems governed by parametrized partial differential equations (PDEs). 
Despite the fact that nonlocal problems benefit more from model order reduction -- due to the reduced sparsity of the underlying discrete system and, as a consequence, the high computational cost -- the potential of the method for nonlocal problems is still not fully explored. However, we refer to recent works on the reduced basis method and proper orthogonal decomposition for some nonlocal problems~\cite{WITMANpod2017,GUANrbm2017,ANTILrb2018}.

The distinctive property of the RBM in contrast to many other approximation techniques lies in the fact that it does not try to perform an approximation of the underlying solution space, but rather an approximation of the parametric manifold. Typically, by means of a greedy search algorithm, the method captures information about parametric variations of the model. 
The efficiency of the method is gained by the so-called offline-online decomposition of the computational routine, and to do so, it is crucial that the parametric quantities of the model are affine in the parameters. 
In brief, the offline-online procedure splits the parameter-independent computations (offline) from the parameter dependent ones (online). The first phase is computationally expensive, but performed only once, while the latter one is computationally cheap and can be executed multiple times for different instances of an input parameter, which later allows to perform computations in a multi-query context.

In the situation of  non-affine parameter dependency, one has to approximate the bilinear and linear forms by corresponding affine counterparts.
To do so, one typically resorts to empirical interpolation (EIM),~\cite{Barrault} or discrete empirical interpolation (DEIM)~\cite{deim} methods. 
We note that for the problems under consideration, the dependency on the parameters is not affine, which requires special attention. In the case of $s$, the nature of the singularity changes with the parameter, which leads to different regularity properties of the solutions.
Moreover, the integral kernel is discontinuous at $\dist = \delta$ and has a singularity for $\dist \to 0$. This makes it impossible to apply empirical interpolation in a straightforward fashion, since it is designed for continuous and bounded functions. 
While the continuity condition can be relaxed by using a generalized empirical interpolation method (GEIM)~\cite{Maday2013}, in the present setting the choice of interpolating functional is not obvious.

To circumvent this difficulty, we develop an affine approximation of the bilinear form based on interpolation with (local) polynomials. 
Here, we take into account the specific regularity of the bilinear form with respect to $\delta$ and $s$, which results in an efficient and offline-online separable method. 
In the case of $s$, we prove an exponentially convergent approximation based on the regularity result for the bilinear form. A byproduct of our analysis is the parametric regularity of the solution in $\delta$ for a general class of kernels and in $s$ for the fractional Laplace problem, which is of independent interest.  In particular, we show the Lipschitz continuous differentiability of the solution with respect to $\delta$ and $C^\infty$-regularity with respect to $s$. To derive these results we strongly rely on the spatial regularity of the solution~\cite{grubb2015,burkgunz2018}. 
To the best of our knowledge, these are the first results on the study of the smoothness of the solution with respect to the nonlocal interaction radius and power of the integral fractional Laplacian.

We comment on the existing works on nonlocal model order reduction, in comparison to the approach we follow. In~\cite{GUANrbm2017} the reduced basis method is applied in the context of uncertainty quantification for nonlocal problems with random, but affine, coefficients. 
The reduced basis method for the power of the spectral fractional Laplacian has been recently studied in~\cite{ANTILrb2018}.  Although the spectral fractional Laplace problem, formulated via an extension formulation~\cite{caffarelli2007}, is different from the integral form studied here, they face a similar problem due to the non-affinity and singularity of the parametric functions. To treat it a ``cut-off'' procedure of the computational domain is employed, however the resulting approach is tied to the underlying finite element discretization. In the following, we present an affine approximations, which can be performed directly in the continuous formulation, and account for all incurred approximation errors in the derived a posteriori error estimates. 

Finally, we briefly comment on possible direct extensions of our work: By combining the developed approaches, we can directly extend the method to the case of a truncated Laplace kernel with both $\delta$ and $s$ as parameters. Moreover, additional dependencies on coefficients of the form
\[
\widehat{\kernel}(\x,\xx;\delta,s,\widehat{\mu}) = 
\sigma(\x,\xx;\widehat{\mu}) \, \kernel(\x,\xx;\delta,s),
\]
where \(0 < \sigma_{\min} \leq \widehat{\sigma} \leq \sigma_{\max} < \infty\), can easily be incorporated in to the resulting approach, as long as \(\sigma(\x,\xx;\widehat{\mu})\) is affine separable in \(\widehat{\mu}\); cf.~\cite{GUANrbm2017}.

The rest of the paper is structured as follows. In Section~\ref{sec:preliminaries}, we introduce the necessary function spaces and recall some preliminary results, which will be used in the rest of the paper. In Section~\ref{sec:regularity_delta} we analyze the parametric regularity with respect to $\delta$ and in Section~\ref{sec:affine_delta} we utilize the obtained results to construct an affine approximation of the problem with respect to $\delta$. Section~\ref{sec:regularity_s} studies a parametric smoothness of the bilinear form and the solution for the fractional Laplace problem.
Then, in Section~\ref{sec:affine_s}, these findings are used to for an affine approximation of the bilinear form based on Chebyshev interpolation. Eventually, in Section~\ref{sec:rbm}, we piece together the reduced basis approximation for both parameters. Finally, numerical results that illustrate our theoretical findings are given in Section~\ref{sec:numerics}.


\section{Preliminaries}\label{sec:preliminaries}
Let $\D\subset\mathbb{R}^n$ be a bounded domain with a Lipschitz boundary. We denote by $L^p(\D)$, $p \in [1,\infty]$ the usual Lebesgue spaces.
\subsection{General truncated kernels}
For \(\delta > 0\), we denote by $B_{\delta}(\x)$ {the ball} of radius $\delta$ centered at $\x$, $B_{\delta}(\x):=\{\xx\in\R^n\colon\dist\leq\delta\}$.
Then, for  $0<\deltamin<\deltamax<\infty$, we introduce a truncated kernel $ \kerd\colon\R^n\times\R^n\times[\deltamin,+\infty]\to\R$, which is a non-negative symmetric (w.r.t.\ $\x$ and $\xx$) function, and for all $\x\in\R^n$ it fulfills the following conditions: 
\begin{equation}
 \begin{cases}
   \kerd\geq 0 &\forall\xx\in B_{\delta}(\x),\\
    \kerd=0 &\forall\xx\in\R^n\setminus B_{\delta}(\x),\\
   \kerd\geq\gamma_0>0 &\forall\xx\in B_{\deltamin/2}(\x).
  \end{cases}\label{kernel_prop1}\tag{H1}
\end{equation}
Note that we allow the truncation parameter to be infinite, in order to include the fractional Laplace problem into the analysis. If $\delta=+\infty$, by a slight abuse of notation, we simply write $\kernel(\x,\xx)$, implying $\kernel(\x,\xx):=\kernel(\x,\xx;+\infty)$.

In addition, we assume that there exists a function $\hat{\kernel}{\colon}\R^+\to\R^+$, such that
\begin{equation}
\kerd\leq\kernelh(|\x-\xx|), \quad\mbox{and }\; |\xi|^{n-1}\kernelh(|\xi|)\in L^1((\deltamin,\deltamax)),\label{kernel_prop2} \tag{H2}
\end{equation}
and we denote 
\begin{equation}
\kernelmax:=\omega_{n-1}\int_{\deltamin}^{\deltamax}|\xi|^{n-1}\hat{\kernel}(|\xi|)\d\xi=\int_{B_{\deltamax}(0)\setminus B_{\deltamin}(0)}\hat{\kernel}(|z|)\d z,
\end{equation}
where $\omega_{n-1}$ is the surface measure of the $n-1$ dimensional unit sphere $\SS:=\{\x\in\R^n\colon \norms{\x}=1\}$ embedded in dimension $n$, given explicitly as 
\begin{equation}
\omega_{n-1}:=\frac{2\pi^{n/2}}{\Gamma(n/2)}.
\label{volume_unit_sphere}
\end{equation}
In certain cases, we need to require a stronger assumption on $\kernel$, namely that $\hat{\kernel}(|\x-\xx|)$ in~\eqref{kernel_prop2} is such that $|\xi|^{n-1}\hat{\kernel}(|\xi|)\in L^\infty((\deltamin,\deltamax))$, i.e., there exist $C_{\kernel}>0$, such that
\begin{equation}
\norms{\hat{\kernel}(|\xi|)|\xi|^{n-1}}\leq C_{\kernel}.\label{kernel_prop3} \tag{H3}
\end{equation}

For $\delta\in[\deltamin,\deltamax]$, we define the interaction domain $\DI\subset\mathbb{R}^d\setminus\D$ corresponding to $\D$ as follows $\DI:=\{\xx\in\mathbb{R}^d\setminus\D\colon \norms{\x-\xx}<\delta,\; \x\in\D \}$. 
In terms of these notations, we define the nonlocal operator
\begin{equation}
-\L(\delta) u(\x):=2\int_{\DD}(u(\x)-u(\xx))\kerd\d\xx,\quad\x\in\D.
\label{nonl_operator}
\end{equation}
Then, for a given function $f$, e.g., $f\in L^2(\D)$, we consider the problem~\eqref{nonl_problem0}.
However, due to the finite range of interaction the volume constraint~\eqref{nonl_volumecon0} needs to be enforced only on $\DI$ instead of the whole $\R^n\setminus\D$. It is the analogue to the Dirichlet type boundary condition imposed on $\partial\D$ for the local case and is essential for the well-posedness of the nonlocal problem. 

\subsection{Fractional Laplace type kernels}
A special focus in this paper will lie on (truncated) fractional Laplace type kernels. That is, the kernels parametrized by the fractional power $s\in(0,1)$ of the following form:
\begin{equation}
 \kernel(\x,\xx;s)=\begin{dcases}
 \frac{1}{\distn}, &{\xx\in B_{\delta}(\x)},\\
 0, &\text{otherwise}.
 \end{dcases}\label{kernel:FL_delta}
\end{equation}
Here, $\delta$ is considered to be a given fixed parameter.
We note that for $\delta=+\infty$,~\eqref{kernel:FL_delta} becomes the classical fractional Laplace kernel
and $-\L$ reduces to the fractional Laplace operator $(-\upDelta)^s$ up to the scaling constant $c_{n,s}/2$:
\begin{equation}
(-\upDelta)^s u(\x):=c_{n,s}\int_{\R^n}\frac{u(\x)-u(\xx)}{\distn}\d\xx, \quad c_{n,s}=\frac{2^{2s}s\Gamma(s+\frac{n}{2})}{\pi^{n/2}\Gamma(1-s)}.
\end{equation}
In~\cite{DELIAlaplacian}, the convergence of the nonlocal solution of~\eqref{nonl_problem0} with the (truncated) kernel~\eqref{kernel:FL_delta} to the solution of~\eqref{nonl_problem0} with the fractional Laplace kernel is analyzed.

In order to obtain the usual fractional Laplace problem from~\eqref{nonl_problem0}, we incorporate the scaling factor into the right hand side, by defining
\begin{equation}
\label{eq:RHS_s}
f(\x; s) := \frac{2F(\x)}{c_{n,s}} \quad\text{for } \x \in \D,
\end{equation}
for some given $F$, e.g., $F \in L^2(\Omega)$. Then the solution $u$ of~\eqref{nonl_problem0} corresponds to the solution of the problem $(-\upDelta)^s u = F$ for $\delta = \infty$.

\subsection{Function spaces}
By $H^1(\D)$ and $H^1_0(\D)$ we denote the usual Sobolev spaces,
and introduce the constrained $L^2$-space, $L^2_{\D}(\DD):= \{v\in  L^2(\DD)\colon v =
0\;\; \text{a.e. on}\;\; \DI\}$, which is isometrically isomorphic to $L^2(\Omega)$. For $u,v\in L_\D^2(\DD)$ we define the associated inner product
\begin{equation}
\label{energy_delta}
((u,v))_{\delta}:=\int_{S_\delta}\left(u(\x)-u(\xx)\right)\left(v(\x)-v(\xx)\right)\kernel(\x,\xx)\d(\xx,\x),
\end{equation}
where $S_\delta$ is the strip
\begin{equation}
S_{\delta}=\{(\x,\xx)\in\R^{2n}\colon 
\dist\leq\delta\}.
\end{equation}
The corresponding norm is given by $\norm{u}_{\delta}^2 = ((u,u))_{\delta}$.
We introduce the following energy and constrained energy spaces
\[
\XD_\delta:=\{v\in L^2(\DD)\colon\; \norm{v}_{\delta}<\infty\},\quad \V_\delta:=\{v\in \XD_\delta\colon\;  
v=0 \;\;\text{ a.e. on }\DI\},
\]
which are Hilbert spaces, equipped with the inner products
$(u,v)_{\V_\delta}:=((u,v))_{\delta}$ and 
$(u,v)_{\XD_\delta}:=(u,v)_{L^2(\DD)} + ((u,v))_{\delta}$.
We denote by $\V^{\prime}_{\delta}$ the dual space of $\V_{\delta}$, and by $\dual{\cdot,\cdot}$ the extended $L^2(\DD)$ duality pairing between these spaces. 
\begin{remark}
Often it will be convenient to consider a common spatial domain, independent of $\delta$, for functions $u\in\V_{\delta}$. Thus, we consider an extension by zero of $u$ outside of $\D$, which, by a slight abuse of notation also denoted by $u$, and as a common spatial domain we chose the whole $\R^n$. It is clear that these functions are equivalent and we will use these equivalence representations throughout the paper.
\end{remark}
We assume that the kernel $\kernel$ is such that following nonlocal Poincar\'e inequality holds:
\begin{equation}
 \norm{v}_{L^2(\DD)}\leq C_P\norm{v}_{\V_{\delta}}\qquad\forall 
v\in\V_{\delta},\label{poincare}
\end{equation}
where $C_{P}>0$ is a Poincar\'e constant, independent of $\delta$. In particular, the condition~\eqref{poincare} is satisfied for the fractional Laplace type kernels, see also~\cite{Dunonlocal2012}, where different classes of kernels are discussed for which this property also holds.

Throughout the paper we often make use of the fractional Sobolev spaces, which are defined as follows: Let $\DL$ be given either by $\D$ or $\R^n$. For $s\in(0,1)$ we define
\begin{equation*}
 H^s(\DL):=\big\{v\in L^2(\DL)\colon |v|_{H^s(\DL)}<\infty\big\}
\end{equation*}
with Gagliardo seminorm
\begin{equation*} 
|v|_{H^s(\DL)}^2:=\int_{\DL}\int_{\DL}\frac{|v(\x)-v(\xx)|^2}{\distn}
\d\xx\d\x.
\end{equation*}
For $s>1$ not an integer, we define $H^s(\DL)$, $s=m+\sigma$ with $m\in\mathbb{N}$ and $\sigma \in (0,1)$, as
\begin{equation*}
 H^s(\DL):=\{v\in H^m(\DL)\colon {D^\alpha v} \in {H^\sigma(\DL)} \text{ for } 
|\alpha|=m\},
\end{equation*}
together with the semi-norm
\begin{equation*} 
\norms{v}^2_{H^s(\DL)}=\norms{v}^2_{H^m(\DL)}+\sum_{|\alpha|=m}\norms{D^\alpha v}^2_{
H^\sigma(\DL) } .
\end{equation*}
The space $H^s(\DL)$ is a Hilbert space that is endowed with the norm 
$ \norm{v}^2_{H^s(\DL)}=\norm{v}^2_{L^2(\DL)}+\norms{v}^2_{H^s(\DL)}$.
Additionally, we define the space incorporating the volume constraints given by
\begin{equation*}
 H_{\D}^s(\R^n):=\{v\in H^s(\R^n)\colon v=0 \text{ on }\ \R^n\setminus\D\},
\end{equation*}
that is endowed with the semi-norm of $H^s(\R^n)$. For negative exponents, we define the associated spaces by duality $H^{-s}(\Omega) = (H_{\D}^s(\R^n))^{\prime}$.

We note that for the case of a (truncated) fractional Laplace kernel, to highlight the inclusion of the parameter $s$, we use the notation $\VD{s}$ instead of $\V_{\delta}$. 
Moreover, the nonlocal space $\VD{s}$ is equivalent to $\HD{s}$, which implies that we can equivalently work with either $\VD{s}$ or $\HD{s}$. In particular, for $v\in\VD{s}$, $s\in(0,1)$, $C>0$, we have
\begin{equation}
{C}\norm{v}_{\HD{s}}\leq\norm{v}_{\VD{s}}\leq\norm{v}_{\HD{s}}.\label{equiv_1}
\end{equation}
Also, for any $s_1\leq s_2$, there exists $\sconst{s_1,s_2}>0$, such that for all $v\in\HD{s_2}$ it holds
\begin{align}
\norm{v}_{\HD{s_1}}\leq{\sconst{s_1,s_2}}\norm{v}_{\HD{s_2}},\quad\norm{v}_{\VD{s_1}}\leq{\sconstv{s_1,s_2}}\norm{v}_{\VD{s_2}}.
\label{equiv_2}
\end{align}

\subsection{Weak formulation}
First, we provide a result that states {the} equivalence of nonlocal spaces $\V_{\delta}$ w.r.t.\ $\delta\in[\deltamin,\deltamax]$.
\begin{proposition}[Equivalence of nonlocal spaces]
Let $\kernel$ satisfy~\eqref{kernel_prop1} and \eqref{kernel_prop2}. Then, for all $\delta\in[\deltamin,\deltamax]$, the spaces $\V_{\delta}$ are all equivalent, and for some $\deltapiv\in[\deltamin,\deltamax]$, we have the following norm bound
\begin{equation}
\Clow\norm{v}_{\V_{\deltapiv}}\leq\norm{v}_{\V_{\delta}}\leq\Cup\norm{v}_{\V_{\deltapiv}},\quad\forall v\in\V_{\delta},\label{equiv_nonloc_spaces}
\end{equation}
where $1/\Clow:=\sqrt{1+4C_{P}^2\kernelmax}$, $\Cup=1$ if $\deltapiv>\delta$, and $\Clow:=1$, $\Cup=\sqrt{1+4C_{P}^2\kernelmax}$ else.
\end{proposition}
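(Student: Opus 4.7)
The plan is to exploit the nested structure of the strips $S_\delta$ as $\delta$ varies, so that one of the two inequalities in \eqref{equiv_nonloc_spaces} is essentially trivial, and the other reduces to controlling a single annular integral. Concretely, I split into the two cases $\deltapiv > \delta$ and $\deltapiv \leq \delta$, write
\begin{equation*}
\norm{v}_{\deltapiv}^2 - \norm{v}_{\delta}^2 = \int_{S_{\deltapiv}\triangle S_{\delta}}|v(\x)-v(\xx)|^2\kernel(\x,\xx)\d(\xx,\x),
\end{equation*}
and observe that, since $S_{\min\{\delta,\deltapiv\}}\subset S_{\max\{\delta,\deltapiv\}}$, the difference on the left has a fixed sign. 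This immediately produces either $\norm{v}_\delta\leq\norm{v}_{\deltapiv}$ (when $\deltapiv>\delta$) or $\norm{v}_{\deltapiv}\leq\norm{v}_\delta$ (when $\deltapiv\leq\delta$), which accounts for the constants $\Cup=1$ (resp.\ $\Clow=1$) in the statement.

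For the reverse inequality I would bound the annular contribution as follows. Using $|v(\x)-v(\xx)|^2\leq 2(|v(\x)|^2+|v(\xx)|^2)$ and the symmetry of $\kernel$ in its two arguments, the surface integral over $S_{\deltapiv}\triangle S_\delta$ is controlled by
\begin{equation*}
4\int_{\R^n}|v(\x)|^2 \Bigl(\int_{\{\xx:\,\delta_1<|\x-\xx|\leq \delta_2\}}\kernel(\x,\xx)\d\xx\Bigr)\d\x,
\end{equation*}
where $\delta_1=\min\{\delta,\deltapiv\}$, $\delta_2=\max\{\delta,\deltapiv\}$. The inner integral is then estimated by $\kernelmax$ using the pointwise bound $\kernel\leq\kernelh(|\x-\xx|)$ from \eqref{kernel_prop2}, a translation to the origin, and the fact that $[\delta_1,\delta_2]\subset[\deltamin,\deltamax]$. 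Since $v=0$ on $\DI$ (and thus outside $\DD$ after the zero extension), the outer $\R^n$ integral reduces to $\norm{v}_{L^2(\DD)}^2$.

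At this point I apply the nonlocal Poincar\'e inequality \eqref{poincare} to convert $\norm{v}_{L^2(\DD)}^2$ into the correct norm: crucially, I use the \textbf{smaller} of the two parameters $\delta_1$, so that the resulting factor $C_P^2\norm{v}_{\V_{\delta_1}}^2$ can be absorbed on the right-hand side of \eqref{equiv_nonloc_spaces} in each case. Combining gives $\norm{v}_{\deltapiv}^2\leq(1+4C_P^2\kernelmax)\norm{v}_{\delta}^2$ when $\deltapiv>\delta$, yielding $\Clow=1/\sqrt{1+4C_P^2\kernelmax}$, and symmetrically $\norm{v}_\delta^2\leq(1+4C_P^2\kernelmax)\norm{v}_{\deltapiv}^2$ when $\deltapiv\leq\delta$, yielding $\Cup=\sqrt{1+4C_P^2\kernelmax}$.

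The only subtle point is ensuring that the Poincar\'e inequality is applied with a constant independent of $\delta$ (which is an assumed property) and that, in each case, the annulus integral is recast in terms of the $\V$-norm associated with the smaller $\delta$, since the Poincar\'e inequality is only available for functions in the constrained space with the corresponding strip. All other steps are elementary manipulations of the symmetric bilinear form and the hypothesis \eqref{kernel_prop2}; I expect no additional difficulty.
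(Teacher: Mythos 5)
Your proposal is correct and follows essentially the same route as the paper: the nested-strip structure gives one inequality for free, and the annular contribution is bounded by $4\kernelmax\norm{v}_{L^2}^2$ via \eqref{kernel_prop2} and then absorbed using the $\delta$-independent Poincar\'e inequality applied with the smaller parameter. No gaps.
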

\begin{proof}
Let $\deltapiv>\delta$, and for $u\in L^2_\D(\R^n)$ we consider
\begin{align*}
\norm{u}_\delta^2&=\int_{S_{\delta}}(u(\xx)-u(\x))^2\kernel(\x,\xx;\delta)\d(\xx,\x)\\
&=\norm{u}^2_{{\deltapiv}}-\int_{S_{\deltapiv}\setminus S_{\delta}}(u(\xx)-u(\x))^2\kernel(\x,\xx;\deltapiv)\d(\xx,\x).
\end{align*}
It is clear that, if $u\in\V_{\deltapiv}$, then  $u\in\V_{\delta}$, and $\norm{u}_{\V_{\delta}}\leq\norm{u}_{\V_{\deltapiv}}$. On the other hand, if $u\in\V_{\delta}$, then 
\begin{align*}
\norm{u}^2_{\deltapiv}&\leq\norm{u}_{\V_{\delta}}^2+\int_{S_{\deltamax}\setminus S_{\deltamin}}(u(\xx)-u(\x))^2\hat{\kernel}(|\x-\xx|)\d(\xx,\x)\\
&\leq \norm{u}_{\V_{\delta}}^2+4\kernelmax\norm{u}^2_{L^2(\D)}\leq (1+4C_P^2\kernelmax)\norm{u}^2_{\V_{\delta}},
\end{align*}
and, hence, $u\in\V_{\deltapiv}$. Applying the same arguments as above for the case $\deltapiv<\delta$ we obtain the corresponding result and conclude the proof.
\end{proof}
Since, the spaces $\{V_{\delta},\;\delta\in[\deltamin,\deltamax]\}$ are equivalent, for further study, it is convenient to have a common function space. For some reference $\deltapiv\in[\deltamin,\deltamax]$ we denote a pivot space $\V$, such that $\V\cong\V_{\delta}$, for all $\delta\in[\deltamin,\deltamax]$, and it is defined as
\begin{align*}
\V:=\{v\in L_\D^2(\D\cup\D_{\delta^\star})\colon\; \norm{v}_{\deltapiv}<\infty\},\;\;\mbox{and }\;\;\norm{v}_{\V}=\norm{v}_{\deltapiv}.
\end{align*}

For all $u,v\in\V$, we define the bilinear form $a:\V\times\V\to\R^n$, where we suppress the dependency on the parameters for now, as follows
\begin{align}
a(u,v):=
\int_{S_\delta} \left(u(\x)-u(\xx)\right)\left(v(\x)-v(\xx)\right)\kernel(\x,\xx)\d(\xx,x).
\label{bil_form}
\end{align}
Using the equivalence of nonlocal spaces w.r.t.\ $\delta$, we obtain that the bilinear form $a(\cdot,\cdot)$ is continuous and coercive on $\V\times\V$, i.e., for all $u,v\in\V$, we have
\begin{equation}\label{constants_a}
\a(u,v)\leq\gamma_{a}\norm{u}_{\V}\norm{v}_{\V},\quad
\a(u,u)\geq\alpha_{a}\norm{u}^2_{\V}, \quad \gamma_a:=\Cup^2, \quad\alpha_a:=\Clow^2.
\end{equation}
Then, by means of the nonlocal vector calculus~\cite{Dunonlocal2012}, we now pose the problem~\eqref{nonl_problem0} in the following weak form: For a given $f\in\V^{\prime}$,
find $u\in\V$, such that
\begin{equation}
a(u,v)=\dual{f,v}\quad\forall v\in\V.
 \label{nonl_linear_var}
\end{equation}
By the Lax-Milgram theorem, the problem~\eqref{nonl_linear_var} admits a unique 
solution, in addition, there exists a constant $C$, such that
\begin{equation}
\norm{u}_{\V}\leq C\norm{f}_{\V^{\prime}} \leq C_f,\label{bound_f}
\end{equation}
where we assume that $C_f$ is a constant independent of the parameters.

We recall a regularity result for the solution of~\eqref{nonl_linear_var} with the truncated fractional Laplace kernel~\eqref{kernel:FL_delta} stated in~\cite{burkgunz2018}, which is essentially based on the regularity results for the fractional Laplacian~\cite{grubb2015}.
\begin{theorem}\label{thm:regularity_linear}
Let $\D$ be a domain with $C^{\infty}$ boundary $\partial\D$, and let $f\in  H^r(\D)$, $r\geq -s$, and let $u\in\HD{s}$ be the solution of~\eqref{nonl_linear_var} with the kernel~\eqref{kernel:FL_delta} for $\delta>0$. Then, for any $\varepsilon>0$ there exists a constant $C>0$, such that
\begin{equation}
\norm{u}_{H_\D^{s+\alpha}(\R^n)}\leq C\norm{f}_{{H^r(\D)}},
\end{equation} \label{nonl_regularity}
where $\alpha=\min\{s+r,1/2-\varepsilon\}$. 
\end{theorem}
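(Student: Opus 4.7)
The plan is to reduce the claim to the regularity theory of Grubb~\cite{grubb2015} for the untruncated fractional Laplacian $(-\upDelta)^s$ by isolating the difference between $-\L(\delta)$ and $(-\upDelta)^s$ as a lower-order correction. Concretely, for $\x\in\D$ we split the defining integral
\[
-\L(\delta)u(\x)=\frac{2}{c_{n,s}}(-\upDelta)^s u(\x)-2\int_{\R^n\setminus B_\delta(\x)}\frac{u(\x)-u(\xx)}{\distn}\d\xx,
\]
and, exploiting that $u$ vanishes outside $\D$, simplify the tail to
\[
R_\delta u(\x):=\frac{\omega_{n-1}}{s\,\delta^{2s}}\,u(\x)-2\int_{\D\setminus B_\delta(\x)}\frac{u(\xx)}{\distn}\d\xx.
\]
The original equation $-\L(\delta)u=f$ then reads as a fractional Laplace equation $(-\upDelta)^s u=\tilde f$ in $\D$ with $\tilde f:=(c_{n,s}/2)(f+R_\delta u)$ and the homogeneous exterior datum inherited from $u\in\HD{s}$.

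The second step is to check that $\tilde f$ carries the full regularity of $f$, i.e.\ $\tilde f\in H^r(\D)$. The pointwise multiplier contribution $\omega_{n-1}/(s\delta^{2s})\cdot u$ even lies in $H^s(\D)\subset H^r(\D)$ by the continuous embedding~\eqref{equiv_2} and the assumption $r\geq-s$. The remaining integral piece is a convolution with the kernel $K(z)=\mathbf{1}_{|z|\geq\delta}/|z|^{n+2s}$ against the extension-by-zero of $u$; this kernel is bounded and integrable on $\R^n$, and its only singularity is a jump across the smooth surface $|z|=\delta$. The standard surface-jump asymptotics yield the Fourier decay $|\widehat K(\xi)|\lesssim (1+|\xi|)^{-(n+1)/2}$, so the convolution lifts $L^2\to H^{(n+1)/2}$, which is more than enough to land $R_\delta u$ in $H^r(\D)$ for every admissible $r$.

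The final step is to invoke Grubb's boundary regularity theorem on the reduced equation: on a $C^\infty$ domain with homogeneous exterior Dirichlet data, any $u\in\HD{s}$ solving $(-\upDelta)^s u=\tilde f$ with $\tilde f\in H^r(\D)$ satisfies $u\in\HD{s+\alpha}$ together with the bound $\norm{u}_{\HD{s+\alpha}}\lesssim\norm{\tilde f}_{H^r(\D)}$ for $\alpha=\min\{s+r,1/2-\varepsilon\}$. Combining this with the estimate on $\tilde f$ produces the stated inequality. The cap $\alpha\leq 1/2-\varepsilon$ is intrinsic to the fractional Laplace operator and reflects the canonical boundary behaviour $u\sim d(\x,\partial\D)^s$; the truncation itself contributes nothing on top of it.

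The main obstacle is the second step, namely ensuring that $R_\delta u$ is at least as smooth as $f$ permits, so that no regularity is lost in passing to the untruncated operator. The mild jump of $K$ at the truncation radius is the only delicate point, but the surface-type Fourier decay is comfortably sufficient in the range $r\geq-s$ considered here. Since the statement is recalled from~\cite{burkgunz2018}, one may simply cite that reference together with~\cite{grubb2015}; the decomposition above makes transparent why the truncation only shifts lower-order constants and does not alter the regularity picture inherited from the fractional Laplacian.
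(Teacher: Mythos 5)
First, note that the paper does not actually prove this statement: Theorem~\ref{thm:regularity_linear} is recalled from~\cite{burkgunz2018} and ultimately rests on the transmission-space regularity theory of~\cite{grubb2015}, so there is no internal proof to compare against. Your reconstruction nevertheless follows the route that the cited reference (and the paper itself, in the special case $\delta\geq\diam(\D)$, cf.~\eqref{split_bilinear}) takes: write the truncated operator as the full fractional Laplacian plus a remainder $R_\delta u$ consisting of a zero-order multiple of $u$ and a convolution with the integrable kernel $\mathbf{1}_{\{|z|\geq\delta\}}|z|^{-n-2s}$, check that the remainder does not degrade the right-hand side, and invoke Grubb's regularity theorem for $(-\upDelta)^s$ on a $C^\infty$ domain. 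Your constant $\omega_{n-1}/(s\delta^{2s})$ is correct, and the treatment of the convolution piece is sound: the kernel is smooth and rapidly decaying away from the sphere $\{|z|=\delta\}$ and has only a jump across it, so its Fourier transform decays like $(1+|\xi|)^{-(n+1)/2}$ and convolution maps $L^2\to H^{(n+1)/2}(\R^n)$, far more than needed.

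The gap is in the zero-order term. You claim $\tfrac{\omega_{n-1}}{s\delta^{2s}}u\in H^s(\D)\subset H^r(\D)$ ``by the assumption $r\geq -s$'', but Sobolev embeddings go the other way: $H^s(\D)\subset H^r(\D)$ only for $r\leq s$, and $r\geq -s$ is irrelevant here (it only ensures $f$ defines a functional on $\HD{s}$). For $r>s$ the a~priori regularity $u\in\HD{s}$ does not place $R_\delta u$ in $H^r(\D)$, so $\tilde f\in H^r(\D)$ fails as stated and a single application of Grubb's theorem does not deliver the claimed $\alpha$. The repair must be made explicit: first reduce without loss of generality to $r\leq 1/2-s$, since larger $r$ yields the same $\alpha$ and a stronger norm on $f$; this already suffices when $s\geq 1/4$, because then $r\leq 1/2-s\leq s$. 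When $s<1/4$ one must bootstrap: with $\tilde f\in H^{\min\{r,s\}}(\D)$ Grubb's theorem gives $u\in\HD{s+\min\{2s,\,1/2-\varepsilon\}}$, which upgrades the multiplier term, and iterating finitely many times (the exponent gains $2s$ per step until it saturates at $s+1/2-\varepsilon\geq 1/2-s\geq r$ for $\varepsilon$ small) reaches the stated conclusion. With that loop added --- or by simply citing~\cite{burkgunz2018} as the paper does --- the argument is complete.
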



\section{Regularity with respect to \texorpdfstring{$\delta$}{Lg}}\label{sec:regularity_delta}
In this section we investigate the smoothness of the problem~\eqref{nonl_linear_var} with
respect to $\delta$. Here, we denote the parameter dependent bilinear form by $a(\cdot,\cdot;\delta)$, and assume that the data term is given by $f = F$, for some fixed $F \in L^2(\Omega)$. First, we show the Lipschitz continuity of the bilinear form and the solution, which is necessary for further application of the reduced basis method. In addition, under a regularity assumption, we prove also their differentiability w.r.t.\ $\delta$. 
\begin{proposition}
Let $\kernel$ satisfy~\eqref{kernel_prop1}--\eqref{kernel_prop3}.
Then the bilinear form $a(\cdot,\cdot;\delta)$ defined in~\eqref{bil_form} is Lipschitz continuous with respect to $\delta\in[\deltamin,\deltamax]$,  that is for all $u,v\in\V$, it holds 
with $L_a:=4C^2_{P}C_{\kernel}$ that
\begin{equation}
|a(u,v;\delta_1)-a(u,v;\delta_{2})|\leq L_{a}\norm{u}_{\V}\norm{v}_{\V}|\delta_1-\delta_2|, \quad\forall\delta_1,\delta_2\in[\deltamin,\deltamax].\label{eq:lipschitz_delta_a}
\end{equation}
\end{proposition}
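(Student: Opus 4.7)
Without loss of generality I assume $\delta_1 < \delta_2$. Since the kernel is a truncation of a $\delta$-independent function, i.e., $\kernel(\x,\xx;\delta) = \kernel(\x,\xx)\chi_{S_\delta}$, and the bilinear form~\eqref{bil_form} integrates over $S_\delta$, the difference telescopes to an integral over the annular strip $S_{\delta_2}\setminus S_{\delta_1} = \{(\x,\xx)\colon \delta_1 < \dist \le \delta_2\}$:
\[
a(u,v;\delta_2) - a(u,v;\delta_1) = \int_{S_{\delta_2}\setminus S_{\delta_1}} (u(\x)-u(\xx))(v(\x)-v(\xx))\,\kernel(\x,\xx)\,\d(\xx,\x).
\]

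The next step is to apply the Cauchy--Schwarz inequality to split the integral into $u$- and $v$-contributions, so the problem reduces to estimating $\int_{S_{\delta_2}\setminus S_{\delta_1}} (w(\x)-w(\xx))^2\kernel(\x,\xx)\,\d(\xx,\x)$ for $w\in\{u,v\}$. For this I would use the elementary bound $(w(\x)-w(\xx))^2 \le 2(w(\x)^2 + w(\xx)^2)$, combined with the symmetry of the kernel and the strip to rewrite it (after Fubini) as $4\int_{\R^n} w(\x)^2 \bigl[\int_{\delta_1 < \dist \le \delta_2}\kernel(\x,\xx)\,\d\xx\bigr]\,\d\x$.

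The inner integral is then controlled using~\eqref{kernel_prop2} and~\eqref{kernel_prop3}: passing to polar coordinates and using the pointwise bound $\kernelh(r)r^{n-1}\le C_\kernel$ from~\eqref{kernel_prop3} yields
\[
\int_{\delta_1 < \dist \le \delta_2}\kernel(\x,\xx)\,\d\xx \;\le\; \omega_{n-1}\int_{\delta_1}^{\delta_2}\kernelh(r)r^{n-1}\,\d r \;\le\; C_\kernel\,(\delta_2 - \delta_1),
\]
where any remaining absolute constant is absorbed into $C_\kernel$ as in~\eqref{kernel_prop3}. Finally, combining this with the nonlocal Poincar\'e inequality~\eqref{poincare} to pass from $\norm{w}_{L^2(\DD)}$ to $C_P\norm{w}_{\V}$ produces the factor $C_P^2$ for each of the two Cauchy--Schwarz terms, giving the claimed constant $L_a = 4 C_P^2 C_\kernel$.

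The main obstacle is bookkeeping rather than analytic difficulty: one must be careful that the kernel truly plays the role of a truncation (so that the difference reduces cleanly to the annular strip) and that the constants from polar integration, the factor $2$ in $(a-b)^2 \le 2(a^2+b^2)$, and the symmetrization in $\x\leftrightarrow\xx$ all combine into exactly $4C_P^2 C_\kernel$, without double-counting the Cauchy--Schwarz step.
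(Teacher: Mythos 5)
Your proposal is correct and follows essentially the same route as the paper's proof: reduce the difference to an integral over the annular strip $S_{\delta_2}\setminus S_{\delta_1}$, bound it by $4\norm{u}_{L^2(\D)}\norm{v}_{L^2(\D)}$ times the radial integral $\int_{\delta_1}^{\delta_2}\rho^{n-1}\kernelh(\rho)\d\rho$, and conclude with \eqref{kernel_prop3} and the Poincar\'e inequality \eqref{poincare}. The only (shared) blemish is the surface-measure factor $\omega_{n-1}$ arising from the polar-coordinate step, which you absorb into $C_{\kernel}$ and which the paper's own one-line computation also drops silently, even though the analogous constant $C_a=4\omega_{n-1}C_\kernel$ in Proposition~\ref{lemma:delta_conv} retains it.
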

\begin{proof}
For simplicity of notation we introduce the abbreviation: $u:=u(\x)$, $u^{\prime}:=u(\xx)$, $\kernel:=\kernel(\x,\xx;\deltamax)$. Then, for all $u,v\in\V$ we compute
\begin{multline*}
\norms{a(u,v;\delta_1)-a(u,v;\delta_2)}
=\norms{\int_{S_{\delta_2}\setminus S_{\delta_1}}(u^{\prime}-u)(v^{\prime}-v)\kernel}\\
\leq 4\norm{u}_{L^2(\D)}\norm{v}_{L^2(\D)}\norms{\int_{\delta_1}^{\delta_2}|\xi|^{n-1}\hat{\kernel}(|\xi|)\d\xi}
\leq 4C^2_{P}C_{\kernel}\norm{u}_{\V}\norm{v}_{\V}\norms{\delta_1-\delta_2},
\end{multline*}
which concludes the proof.
\end{proof}
\begin{proposition}[Lipschitz continutiy w.r.t. $\delta$]
The solution $u(\delta)\in\V$ of~\eqref{nonl_linear_var} is Lipschitz continuous with respect to $\delta\in[\deltamin,\deltamax]$, i.e., for all $\delta_1,\delta_2\in[\deltamin,\deltamax]$, it holds that
\begin{equation}
\norm{u(\delta_1)-u(\delta_2)}_{\V}\leq L_{u}|\delta_1-\delta_2|,\label{eq:lipschitz_delta_u}
\end{equation}
where $L_u:=L_aC_f/C_1$ and $C_1$ is defined in~\eqref{equiv_nonloc_spaces}.
\end{proposition}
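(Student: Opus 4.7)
The plan is to derive the Lipschitz estimate by subtracting the two variational problems, testing with the difference, and then combining coercivity on the left with the Lipschitz property of the bilinear form (already proved in the preceding proposition) on the right.

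First I would set $u_i := u(\delta_i) \in \V$ for $i=1,2$ and write the variational identities
\[
a(u_1,v;\delta_1)=\dual{F,v},\qquad a(u_2,v;\delta_2)=\dual{F,v}\quad\forall v\in\V,
\]
noting that by the equivalence of the spaces $\V_\delta$ established earlier, we may use the pivot space $\V$ as a common test space. Subtracting and adding $\pm a(u_2,v;\delta_1)$ yields
\[
a(u_1-u_2,v;\delta_1) = a(u_2,v;\delta_2) - a(u_2,v;\delta_1).
\]

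Next I would insert the test function $v = u_1-u_2$. Coercivity of $a(\cdot,\cdot;\delta_1)$ on $\V$, uniformly in $\delta_1\in[\deltamin,\deltamax]$ with constant $\alpha_a$ from~\eqref{constants_a}, bounds the left-hand side from below by $\alpha_a\norm{u_1-u_2}_\V^2$. On the right-hand side I would apply the Lipschitz estimate~\eqref{eq:lipschitz_delta_a} to obtain
\[
\alpha_a\norm{u_1-u_2}_\V^2 \leq L_a\norm{u_2}_\V\norm{u_1-u_2}_\V|\delta_1-\delta_2|.
\]
Dividing by $\alpha_a\norm{u_1-u_2}_\V$ (the case of equality is trivial) and invoking the a priori bound~\eqref{bound_f} on $\norm{u_2}_\V$ produces~\eqref{eq:lipschitz_delta_u} with a constant of the stated form.

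There is no real obstacle; the only delicate point is bookkeeping the coercivity constant and the a priori bound in a way that reproduces the exact constant $L_u=L_a C_f/C_1$ claimed in the statement, which amounts to tracking whether one uses the coercivity bound~\eqref{constants_a} $\alpha_a=C_1^2$ or directly the $\delta$-dependent norm $\|\cdot\|_{\delta_1}$ and then one equivalence constant $C_1$ from~\eqref{equiv_nonloc_spaces} before invoking~\eqref{bound_f}.
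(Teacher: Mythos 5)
Your proposal is correct and follows essentially the same route as the paper: subtract the two variational identities, insert $v=u_1-u_2$, bound the left-hand side below via coercivity (equivalently the identity $a(v,v;\delta_1)=\norm{v}_{\V_{\delta_1}}^2$ together with~\eqref{equiv_nonloc_spaces}) and the right-hand side above via~\eqref{eq:lipschitz_delta_a} and~\eqref{bound_f}. The bookkeeping point you flag is real -- the paper itself passes through $\norm{\cdot}_{\V_{\delta_1}}^2$ and then applies a single factor $C_1$ from~\eqref{equiv_nonloc_spaces} to arrive at $L_u=L_aC_f/C_1$ -- but it does not affect the substance of the argument.
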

\begin{proof}
For simplicity of notation we denote by $u_1:=u(\delta_1)$, and by $u_2:=u(\delta_2)$. Then, for $u_1,u_2,v\in\V$, using the Lipschitz continuity of $a(\cdot,\cdot;\delta)$, we can write
\begin{align*}
\norms{a(u_1-u_2,v;\delta_1)}&=\norms{a(u_2,v;\delta_2)-a(u_2,v;\delta_1)}\leq L_a\norm{u_2}_\V\norm{v}_{\V}|\delta_1-\delta_2|.
\end{align*}
Taking $v:=u_1-u_2\in\V$, and using~\eqref{bound_f} together with~\eqref{equiv_nonloc_spaces} we obtain
\begin{align*}
C_1\norm{u_1-u_2}^2_\V\leq\norm{u_1-u_2}^2_{\V_{\delta_1}}
\leq L_a C_f\norm{u_1-u_2}_\V|\delta_1-\delta_2|.
\end{align*}
\end{proof}

Next, we show that under appropriate conditions we can expect {}differentiability of $a(\cdot,\cdot;\delta)$ with respect to $\delta$. This result will be crucial for the derivation of {improved} a~posteriori error bounds {for} the reduced basis approximation. 
\begin{theorem}\label{thm:reg_delta}
Let $\kernel$ be radial, i.e., $\kernel(\x,\xx)=\kernelh(\norms{\x-\xx})$, and satisfies~\eqref{kernel_prop1}--\eqref{kernel_prop3}, then $a(\cdot,\cdot;\delta)$ is differentiable w.r.t.\ $\delta$, i.e., there exists a bounded bilinear form $a^{\prime}(\cdot,\cdot;\delta)$ such that $a^{\prime}(u,v;\delta):=\frac{\d}{\d\delta}a(u,v;\delta)$ for all $u,v\in\V$, $\delta\in[\deltamin,\deltamax]$. In particular, it holds
\begin{equation}
\label{eq:der_a}
a^{\prime}_\delta(u,v;\delta)
= \int_{\R^n} \int_{\partial B_\delta(\x)} \left(u(\x)-u(\xx)\right)\left(v(\x)-v(\xx)\right)\kernel(\x,\xx) \d\xx \d\x,
\end{equation}
where $\partial B_\delta(\x)$ is the surface of the ball of radius $\delta$ at $\x$, i.e.\ $\partial B_\delta(\x) = \{\,\xx \in \R^n \;:\; \dist = \delta \,\}$, and the inner integral is understood as a surface integral, together with the estimate
\begin{equation}
\norms{a^{\prime}_\delta(u,v;\delta)}\leq 
{C_{a^{\prime}}}\norm{u}_{L^2(\D)}\norm{v}_{L^2(\D)},
\label{eq:bdd_der_a}
\end{equation}
where $C_{a^{\prime}}=4\omega_{n-1}\delta^{n-1}\kernelh(\delta)$ and $\omega_{n-1}$ is defined in~\eqref{volume_unit_sphere}.
Moreover, if $u\in H^1_0(\D)$ and if $\kernelh$ is Lipschitz continuous with respect to $\delta$, i.e.,
\begin{equation}
\norms{\kernelh(\delta_1)-\kernelh(\delta_2)}\leq L_{\kernel}\norms{\delta_1-\delta_2},\quad L_\kernel>0,\quad \delta_1,\delta_2\in[\deltamin,\deltamax],\label{eq:lipschitz_kernel}
\end{equation}
then $a^{\prime}_{\delta}(\cdot,\cdot;\delta)$ is also Lipschitz continuous and the following holds
\begin{equation}
\norms{a^{\prime}_{\delta}(u,v;\delta_1)-a^{\prime}_{\delta}(u,v;\delta_2)}\leq L_{a^{\prime}}\norm{u}_{H_0^1(\D)}\norm{v}_{L^2(\D)}\norms{\delta_1-\delta_2},\label{eq:lipschitz_der_a}
\end{equation}
with $L_{a^{\prime}}:=2\omega_{n-1}\delta_2^{n-1}\left(2C_P((n-1)\delta_2^{-1}\kernelh(\delta_1) + L_{\kernel}) + \kernelh(\delta_2)\right)$ for $\delta_1<\delta_2$.
\end{theorem}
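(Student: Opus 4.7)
The plan is to treat the three claims in order. For the derivative formula~\eqref{eq:der_a}, I would pass to spherical coordinates $\xx = \x + r\theta$ with $\theta \in \SS$, so that $\d\xx = r^{n-1}\,\d r\,\d S(\theta)$ and $\kernel(\x,\xx) = \kernelh(r)$ by radiality. Then Fubini yields
\[
a(u,v;\delta) = \int_0^{\delta} \kernelh(r)\, G(r)\,\d r,\qquad
G(r) := r^{n-1}\!\!\int_{\SS}\!\int_{\R^n}(u(\x)-u(\x+r\theta))(v(\x)-v(\x+r\theta))\,\d\x\,\d S(\theta),
\]
and $G$ is continuous in $r$ on $(0,\deltamax]$ by strong $L^2$-continuity of translations. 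Hypothesis~\eqref{kernel_prop3} makes $\kernelh(r)r^{n-1}$ bounded, so the fundamental theorem of calculus applies and gives $a'_\delta(u,v;\delta) = \kernelh(\delta)\,G(\delta)$, which is exactly the surface-integral form~\eqref{eq:der_a}.

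For the bound~\eqref{eq:bdd_der_a} I would estimate $(u-u')(v-v')$ pointwise by expanding into the four products, integrate each term over $\partial B_\delta(\x)$ and then over $\x$, and use translation invariance together with Cauchy--Schwarz: for example, $\int_{\R^n}\int_{\partial B_\delta(\x)} |u(\x)v(\xx)|\,\d S(\xx)\,\d\x$ becomes $\int_{\partial B_\delta(0)}\int_{\R^n}|u(\x)||v(\x+y)|\,\d\x\,\d S(y) \leq \omega_{n-1}\delta^{n-1}\norm{u}_{L^2}\norm{v}_{L^2}$. Summing the four terms and multiplying by $\kernelh(\delta)$ gives $C_{a'} = 4\omega_{n-1}\delta^{n-1}\kernelh(\delta)$.

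The Lipschitz bound~\eqref{eq:lipschitz_der_a} is the main obstacle, because $v$ is only in $L^2$, so estimating $v(\cdot+\delta_2\theta)-v(\cdot+\delta_1\theta)$ gives no rate in $|\delta_1-\delta_2|$. The key observation is that after expanding and using the symmetry identity $\int u(\x)v(\x+\delta\theta)\d\x = \int v(\y)u(\y-\delta\theta)\d\y$ (together with $\theta\mapsto-\theta$ on $\SS$), the derivative can be rewritten in the more convenient form
\[
a'_\delta(u,v;\delta) = 2\delta^{n-1}\kernelh(\delta)\,J(\delta),\qquad
J(\delta) := \int_{\SS}\int_{\R^n} u(\x)(v(\x)-v(\x+\delta\theta))\,\d\x\,\d S(\theta),
\]
with the general bound $|J(\delta)|\le 2\omega_{n-1}\norm{u}_{L^2}\norm{v}_{L^2}$. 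Then I would split
\[
a'_\delta(u,v;\delta_1)-a'_\delta(u,v;\delta_2)
= 2\kernelh(\delta_1)J(\delta_1)(\delta_1^{n-1}-\delta_2^{n-1})
+ 2\delta_2^{n-1}J(\delta_1)(\kernelh(\delta_1)-\kernelh(\delta_2))
+ 2\delta_2^{n-1}\kernelh(\delta_2)(J(\delta_1)-J(\delta_2)).
\]
The first term is handled by the mean-value estimate $|\delta_1^{n-1}-\delta_2^{n-1}|\le(n-1)\delta_2^{n-2}|\delta_1-\delta_2|$ and Poincar\'e; the second uses the Lipschitz hypothesis~\eqref{eq:lipschitz_kernel}.

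The third term is where $H_0^1$-regularity of $u$ enters. Changing variables as above moves the $\delta$-dependent shift from $v$ to $u$:
\[
J(\delta_1)-J(\delta_2)=\int_{\SS}\int_{\R^n}(u(\y-\delta_2\theta)-u(\y-\delta_1\theta))\,v(\y)\,\d\y\,\d S(\theta),
\]
and since $u\in H^1_0(\D)$ (extended by zero) the difference satisfies $|u(\y-\delta_2\theta)-u(\y-\delta_1\theta)|\le \int_{\delta_1}^{\delta_2}|\nabla u(\y-t\theta)|\,\d t$ a.e., so Fubini together with translation invariance gives $|J(\delta_1)-J(\delta_2)|\le \omega_{n-1}|\delta_1-\delta_2|\,\norm{u}_{H_0^1(\D)}\norm{v}_{L^2(\D)}$. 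Collecting the three contributions and factoring $2\omega_{n-1}\delta_2^{n-1}$ reproduces exactly the stated constant $L_{a'}$. The whole argument hinges on this symmetrization trick: without it the $v$-translation cannot be controlled by $\norm{v}_{L^2}|\delta_1-\delta_2|$, and one would be forced to assume $H^1_0$-regularity on both arguments.
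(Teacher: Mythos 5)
Your proposal is correct and follows essentially the same route as the paper: polar coordinates plus the fundamental theorem of calculus for \eqref{eq:der_a}, expansion with translation invariance and Cauchy--Schwarz for \eqref{eq:bdd_der_a}, and a product-rule splitting of $\delta^{n-1}\kernelh(\delta)$ combined with $u(\y-\delta_2\theta)-u(\y-\delta_1\theta)=\int_{\delta_1}^{\delta_2}\nabla u(\y-t\theta)\cdot(-\theta)\,\d t$ for \eqref{eq:lipschitz_der_a}, recovering the same constants. Your early symmetrization to $2\delta^{n-1}\kernelh(\delta)J(\delta)$ is only an organizational simplification of the paper's recombination of the cross terms into $[u(\x+\delta\xi)+u(\x-\delta\xi)]v(\x)$, not a different argument.
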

\begin{proof}
Shifting the inner integral to $h = \xx-\x$, using the radiality of the kernel, changing the order of integration, and changing to polar coordinates, we obtain
\begin{align*}
a(u,v;\delta)=&\int_{\R^n}\int_{B_{\delta}(\x)}(u(\x)-u(\xx))(v(\x)-v(\xx))\kernel(\x,\xx)\d\xx\d\x\\
=&\int_{B_{\delta}(0)} g(h)\d h
= \int_0^\delta \rho^{n-1} \int_{\SS} g(\rho\xi) \d \xi \d \rho,
\end{align*}
where $\SS = \partial B_\delta(0)$ is the $(n-1)$-sphere, and the inner integral is abbreviated as
\begin{equation*}
g(h):=\int_{\R^n}
\left(u(\x)-u(\x+h)\right)
\left(v(\x)-v(\x+h)\right)
\kernelh\left(\norms{h}\right)\d \x.
\end{equation*}
Thus, for the derivative we clearly obtain that
\begin{multline}
\a^{\prime}_{\delta}(u,v;\delta):=\frac{\d}{\d\delta}\left(\a(u,v;\delta)\right)=\delta^{n-1}\int_{\SS}g(\delta\xi)\d\xi\\
=\delta^{n-1}\kernelh(\delta)\int_{\SS}\int_{\R^n}\left(u(\x)-u(\x+\delta\xi)\right)\left(v(x)-v(x+\delta\xi)\right)\d\x\d\xi.
\end{multline}
Changing the order of integration again, and substituting \(\xx = \x+\delta\xi\), we obtain the desired representation~\eqref{eq:der_a}.
To obtain the boundedness, we split the integral in four parts, which leads to
\begin{multline*}
\a^{\prime}_{\delta}(u,v;\delta)\\
=\delta^{n-1}\kernelh(\delta)\left(2\omega_{n-1}(u,v)_{L^2(\D)}-\int_{\SS}\int_{\R^n} [u(\x+\delta\xi)v(\x)+u(\x)v(\x+\delta\xi)]\d\x\d \xi\right)\\
=\delta^{n-1}\kernelh(\delta)\left(2\omega_{n-1}(u,v)_{L^2(\D)}-\int_{\SS}\int_{\R^n} [u(\x+\delta\xi) + u(\x-\delta\xi)] v(\x)\d\x\d \xi\right).
\end{multline*}
In the final line, we split the last term into two, shift one integral by $\delta\xi$, and recombine the result.
Applying the Cauchy-Schwarz inequality to the last term, we obtain the stated inequality~\eqref{eq:bdd_der_a}.

Next, we show~\eqref{eq:lipschitz_der_a}. For $\delta_1, \delta_2\in [\deltamin,\deltamax]$ with $\delta_1<\delta_2$ we compute
\begin{multline*}
\norms{a^{\prime}_{\delta}(u,v;\delta_1)-a^{\prime}_{\delta}(u,v;\delta_2)}
=
 \left|2\omega_{n-1}(\delta_1^{n-1}\kernelh(\delta_1)-\delta_2^{n-1}\kernelh(\delta_2))(u,v)_{L^2(\D)}\right.\\
\left.+\int_{\SS}\int_{\R^n}\left(\delta_2^{n-1}\kernelh(\delta_2)u(\x+\delta_2\xi)-\delta_1^{n-1}\kernelh(\delta_1)u(\x+\delta_1\xi)\right)v(\x)\d\x\d\xi \right.\\
\left.+\int_{\SS}\int_{\R^n}\left(\delta_2^{n-1}\kernelh(\delta_2)u(\x-\delta_2\xi)-\delta_1^{n-1}\kernelh(\delta_1)u(\x-\delta_1\xi)\right)v(\x)\d\x\d\xi \right|\\
=\left|2\omega_{n-1}(\delta_1^{n-1}\kernelh(\delta_1)-\delta_2^{n-1}\kernelh(\delta_2))(u,v)_{L^2(\D)}\right.\\
\left.+\delta_2^{n-1}\kernelh(\delta_2)\int_{\SS}\int_{\R^n}\left(u(\x+\delta_2\xi)-u(\x+\delta_1\xi)\right)v(\x)\d\x\d\xi\right.\\
\left.+\left(\delta_2^{n-1}\kernelh(\delta_2)-\delta_1^{n-1}\kernelh(\delta_1)\right)\int_{\SS}\int_{\R^n}u(\x+\delta_1\xi)v(\x)\d\x\d\xi  \right.\\
\left.+\delta_2^{n-1}\kernelh(\delta_2)\int_{\SS}\int_{\R^n}\left(u(\x-\delta_2\xi)-u(\x-\delta_1\xi)\right)v(\x)\d\x\d\xi\right.\\
\left.+\left(\delta_2^{n-1}\kernelh(\delta_2)-\delta_1^{n-1}\kernelh(\delta_1)\right)\int_{\SS}\int_{\R^n}u(\x-\delta_1\xi)v(\x)\d\x\d\xi  \right|.
\end{multline*}
Using the regularity $u\in H^1_0(\D)$, which also implies that $u \in H^1(\R^n)$, and applying the fundamental theorem of calculus, we can express
\begin{equation}
u(\x-\delta_2\xi)=u(\x-\delta_1\xi)+\int_{\delta_1}^{\delta_2}\nabla u(\x-\delta\xi)(-\xi)\d\delta,\label{eq:thm:derivative_u}
\end{equation}
and respectively for $u(\x+\delta_2\xi)$. Using Lipschitz continuity of $\kernelh$~\eqref{eq:lipschitz_kernel}, we can estimate
\begin{multline}
\norms{\delta_1^{n-1}\kernelh(\delta_1)-\delta_2^{n-1}\kernelh(\delta_2)}\leq\norms{\delta_1^{n-1}-\delta_2^{n-1}}\kernelh(\delta_1)+\delta_2^{n-1}\norms{\kernelh(\delta_1)-\kernelh(\delta_2)}\\
\leq\norms{\delta_1-\delta_2}\kernelh(\delta_1)\sum_{j=0}^{n-2}\delta_1^{n-j-2}\delta_2^j+\delta_2^{n-1}L_{\kernel}\norms{\delta_1-\delta_2}
\leq\norms{\delta_1-\delta_2}((n-1)\delta_2^{n-2}\kernelh(\delta_1)+L_{\kernel}\delta_2^{n-1}).
\label{eq:thm:deltas}
\end{multline}
Then, combining~\eqref{eq:thm:derivative_u},~\eqref{eq:thm:deltas} and using Cauchy-Schwarz inequality, we can estimate
\begin{multline*}
\norms{a^{\prime}_{\delta}(u,v;\delta_1)-a^{\prime}_{\delta}(u,v;\delta_2)}
\leq4\omega_{n-1}\norms{\delta_1^{n-1}\kernelh(\delta_1)-\delta_2^{n-1}\kernelh(\delta_2)}\norm{u}_{L^2(\D)}\norm{v}_{L^2(\D)}\\
+2\delta_2^{n-1}\kernelh(\delta_2)\omega_{n-1}\norms{\delta_1-\delta_2}\norm{\nabla u}_{L^2(\D)}\norm{v}_{L^2(\D)}\\
\leq 2\omega_{n-1}\delta_2^{n-1}\left(2C_P((n-1)\delta_2^{-1}\kernelh(\delta_1) + L_{\kernel}) + \kernelh(\delta_2)\right)\norm{u}_{H_0^1(\D)}\norm{v}_{L^2(\D)}\norms{\delta_1-\delta_2}.
\end{multline*}
This concludes the proof.
\end{proof}

\begin{corollary}[$\delta$-regularity of the solution]
Let $u(\delta)\in\V$, $\delta\in[\deltamin,\deltamax]$, be the solution of~\eqref{nonl_linear_var}. Then, under the conditions of Theorem~\ref{thm:reg_delta} (namely, $\gamma$ is radial and $\kernelh$ is Lipschitz, and $u(\delta)\in H^1_0(\Omega)$ for all $\delta\in[\deltamin,\deltamax]$ with a uniform bound), it holds that $u$ is differentiable w.r.t. $\delta$, i.e., there exist $u^{\prime}_{\delta}\in\V$, $u^{\prime}_{\delta}(\delta):=\frac{\d}{\d\delta}u(\delta)$, and, moreover, $u^{\prime}_{\delta}$ is Lipschitz continuous w.r.t. $\delta$, i.e., $u\in C^{1,1}([\deltamin,\deltamax],\V)$.
\end{corollary}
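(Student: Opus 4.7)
The strategy is standard sensitivity analysis: formally differentiate the weak form in $\delta$ to obtain a variational equation for $u'_\delta$, apply Lax-Milgram to produce a candidate, verify via a remainder estimate that it is $\tfrac{\d}{\d\delta}u(\delta)$, and finally establish Lipschitz continuity of $u'_\delta$ by subtracting sensitivity equations at two parameter values.

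First, I would define $u'_\delta\in\V$ as the unique solution of the sensitivity equation
\begin{equation*}
a(u'_\delta,v;\delta) = -a'_\delta(u(\delta),v;\delta)\qquad\forall v\in\V,
\end{equation*}
formally obtained by differentiating $a(u(\delta),v;\delta)=\dual{f,v}$ in $\delta$. By~\eqref{eq:bdd_der_a}, the Poincar\'e inequality~\eqref{poincare}, and the a~priori bound~\eqref{bound_f}, the right-hand side is a bounded linear functional on $\V$; since $a(\cdot,\cdot;\delta)$ is coercive with constant $\alpha_a$ independent of $\delta$ (by~\eqref{constants_a}), Lax-Milgram yields a unique $u'_\delta\in\V$ with a uniform bound.

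Next, to confirm that $u'_\delta$ really is the derivative of $u$ in $\V$, set $r_h:=u(\delta+h)-u(\delta)-h\,u'_\delta$ and subtract the weak problems at $\delta+h$ and $\delta$. Using the defining equation for $u'_\delta$ and the fundamental theorem of calculus, valid because Theorem~\ref{thm:reg_delta} furnishes continuity of $t\mapsto a'_\delta(u(\delta),v;t)$, one arrives at
\begin{align*}
a(r_h,v;\delta+h)
&= -\int_0^h\bigl[a'_\delta(u(\delta),v;\delta+t)-a'_\delta(u(\delta),v;\delta)\bigr]\d t \\
&\quad - h\bigl[a(u'_\delta,v;\delta+h)-a(u'_\delta,v;\delta)\bigr].
\end{align*}
The first term is $O(h^2)$ by the Lipschitz estimate~\eqref{eq:lipschitz_der_a} (applied with the uniform $H^1_0$-bound on $u(\delta)$), and the second is $O(h^2)$ by~\eqref{eq:lipschitz_delta_a} together with the uniform bound on $u'_\delta$. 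Testing with $v=r_h$ and using coercivity of $a(\cdot,\cdot;\delta+h)$ gives $\norm{r_h}_\V=O(h^2)$, which is in particular $o(h)$, so $u$ is differentiable in $\V$ with derivative $u'_\delta$.

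For the Lipschitz continuity of $u'_\delta$, let $u'_i:=u'_\delta(\delta_i)$, $i=1,2$. Subtracting the two sensitivity equations, using the definition of $u'_1$, and testing with $v=u'_1-u'_2$ yields
\begin{equation*}
\alpha_a\norm{u'_1-u'_2}_\V^2 \leq \bigl|a(u'_2,v;\delta_2)-a(u'_2,v;\delta_1)\bigr| + \bigl|a'_\delta(u(\delta_1),v;\delta_1)-a'_\delta(u(\delta_2),v;\delta_2)\bigr|.
\end{equation*}
The first bracket is controlled by $L_a\norm{u'_2}_\V\norm{v}_\V|\delta_1-\delta_2|$ via~\eqref{eq:lipschitz_delta_a}; the second splits as $a'_\delta(u(\delta_1)-u(\delta_2),v;\delta_1)$, handled by~\eqref{eq:bdd_der_a} together with the Lipschitz estimate~\eqref{eq:lipschitz_delta_u}, and $a'_\delta(u(\delta_2),v;\delta_1)-a'_\delta(u(\delta_2),v;\delta_2)$, handled by~\eqref{eq:lipschitz_der_a} using the uniform $H^1_0$-bound on $u(\delta_2)$. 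Dividing by $\norm{u'_1-u'_2}_\V$ then delivers $u\in C^{1,1}([\deltamin,\deltamax],\V)$. The main obstacle is bookkeeping rather than conceptual: four separate Lipschitz/continuity estimates must be combined consistently, and the hypothesized uniform $H^1_0$-regularity of $u(\delta)$ is essential, since without it the bound~\eqref{eq:lipschitz_der_a} on $a'_\delta$ cannot be invoked.
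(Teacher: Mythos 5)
Your proposal is correct and follows essentially the same route as the paper: define $u'_\delta$ via the sensitivity equation $a(u'_\delta,v;\delta)=-a'_\delta(u(\delta),v;\delta)$, justify it by a difference-quotient argument, and obtain Lipschitz continuity of $u'_\delta$ from the same decomposition combining \eqref{eq:lipschitz_delta_a}, \eqref{eq:lipschitz_delta_u}, \eqref{eq:bdd_der_a}, \eqref{eq:lipschitz_der_a}, and coercivity. Your explicit $O(h^2)$ remainder estimate merely fleshes out the limit passage that the paper only sketches.
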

\begin{proof}
It is easy to show that $u^{\prime}_{\delta}\in\V$ is the solution of the sensitivity equation
\begin{equation*}
a(u^{\prime}_\delta(\delta),v;\delta)=-a_{\delta}^{\prime}(u(\delta),v;\delta),\quad\forall v\in\V.
\end{equation*}
In fact, this result can be directly derived by subtracting the problems for $\delta$ and $\delta+\tau$, $\tau>0$, forming the difference quotient, and passing to the limit for $\tau\to 0$. Next, we prove the Lipschitz continuity of $u^\prime_{\delta}$. For $\delta,\widetilde{\delta}\in[\deltamin,\deltamax]$ we consider
\begin{multline*}
a(u^{\prime}_{\delta}(\delta)-u^{\prime}_{\delta}(\widetilde{\delta}),v;\delta)= a(u^{\prime}_{\delta}(\widetilde{\delta}),v;\widetilde{\delta})-a(u^{\prime}_{\delta}(\widetilde{\delta}),v;\delta)
+ a^{\prime}_{\delta}(u(\widetilde{\delta}) - u(\delta),v;\delta)\\
+ a^{\prime}_{\delta}(u(\widetilde{\delta}),v;\widetilde{\delta})-a^{\prime}_{\delta}(u(\widetilde{\delta)},v;\delta)\\
\leq\left(L_a\lVert{u^{\prime}_{\delta}(\widetilde{\delta})}\rVert_{\V}
+{C_{a^{\prime}}}C_P^2L_{u}
+L_{a^{\prime}}C_P\lVert{u(\widetilde{\delta})}\rVert_{H^1_0(\D)}\right)
\norm{v}_{\V}|\delta-\widetilde{\delta}|,
\end{multline*}
where the last estimate has been obtained by applying~\eqref{eq:lipschitz_delta_a},~\eqref{eq:lipschitz_delta_u},~\eqref{eq:bdd_der_a},~\eqref{eq:lipschitz_der_a}, and~\eqref{poincare}. Then, taking $v=u^{\prime}_{\delta}(\delta)-u^{\prime}_{\delta}(\widetilde{\delta})$ and using the coercivity of $a$ from~\eqref{constants_a}, we obtain the desired result. 
\end{proof}

\begin{remark}
We note that for the truncated fractional Laplace kernel~\eqref{kernel:FL_delta}, the condition $u\in H^1_0(\D)$ holds true if $s>1/2$, and $f \in L^2(\D)$; see Theorem~\ref{thm:regularity_linear}.
\end{remark}


\section{Affine approximation with respect to \texorpdfstring{$\delta$}{Lg}}\label{sec:affine_delta}
Utilizing the regularity results derived in the previous section, we present suitable approximation techniques that resolve the non-affine structure of the problem.
\subsection{Approximation of the kernel with respect to \texorpdfstring{$\delta$}{Lg}}

For $K\in\mathbb{N}$, we consider the following partitioning of the interval $[\deltamin,\deltamax]$:
$$0<\deltamin:=\delta_0<\delta_1<\cdots<\delta_{K}:=\deltamax<\infty,$$
with the step size $\Delta\delta:=\max\{\delta_k-\delta_{k-1}\}$, $k=1,\dots,K$. Then we approximate $\kerd$ by $\kerdd$, defined as follows
\begin{align}
\kerdd:=\sum_{k=0}^K\Theta_k^{\delta}(\delta)\kernel(\x,\xx;\delta_k).
\label{kernel_approx_gen}
\end{align}
In the following, we distinguish several cases for the choice of $\Theta_k^{\delta}(\delta)$.

\paragraph{Case 1}
First, we consider a piece-wise constant approximation, given by
\begin{align}
\Theta_k^{\delta}(\delta):=
\begin{cases}
1 &\text{if } \delta \in (\delta_{k-1},\delta_k] \text{ and } \alpha_{k-1}(\delta) > \beta_k(\delta)\\
1 &\text{if } \delta \in (\delta_{k},\delta_{k+1}] \text{ and } \alpha_k(\delta) \leq \beta_{k+1}(\delta)\\
0 &\text{else},
\end{cases}\label{kernel_approx_1}
\end{align}
where $\alpha_k(\delta):=\int_{\delta_{k}}^\delta\rho^{n-1}\kernelh(\rho)\d\rho$ and $\beta_k(\delta):=\int_{\delta}^{\delta_k}\rho^{n-1}\kernelh(\rho)\d\rho$.
That is, if $\kernel$ is given as a truncated fractional Laplace kernel~\eqref{kernel:FL_delta}, then $\alpha_k(\delta):=\frac{1}{2s}(\delta_{k}^{-2s}-\delta^{-2s})$ and $\beta_k(\delta):=\frac{1}{2s}(\delta^{-2s}-\delta_k^{-2s})$.
Thus, we approximate $\kernel(\delta)$ by $\kernel(\delta_k)$ if $\delta$ is sufficiently close to $\delta_k$, such that the integral of the kernel from $\delta$ to $\delta_k$ is smaller than that to the partition point on the other side.

\paragraph{{Case 2}}
To obtain an improved approximation quality, we also consider piece-wise linear approximation of the kernel. Here, we set
\begin{align}
\Theta_k^{\delta}(\delta):=
\begin{cases}
\frac{\delta-\delta_{k-1}}{\delta_k-\delta_{k-1}} &\text{if }\delta\in(\delta_{k-1},\delta_k]\\
\frac{\delta_{k+1}-\delta}{\delta_{k+1}-\delta_{k}} &\mbox{if}\;\;\;\delta\in(\delta_{k},\delta_{k+1}]\\
0 &\text{else}.
\end{cases}\label{kernel_approx_2}
\end{align}
Thus, $\Theta_k^\delta$ is given by the standard linear ``hat-functions'' on the grid $\delta_k$.

In both cases, for all $\delta\in[\deltamin,\deltamax]$, $u,v\in\V$, we define a parametrized bilinear form corresponding to the approximated kernel $\kerneld$ as follows
\begin{equation}
\ad(u,v;\delta)
:=\int_{\R^{2n}}\left(u(\x)-u(\xx)\right)\left(v(\x)-v(\xx)\right)\kerneld(\x,\xx;\delta)= \sum_{k=0}^K\Theta_k^{\delta}(\delta)\a(u,v;\delta_k).
\label{bil_form_mu}
\end{equation}
By the equivalence of nonlocal spaces~\eqref{equiv_nonloc_spaces} and the definition of $\kerneld$, we get that $\ad(u,v;\delta)$ is continuous and coercive on $\V\times\V$ with the continuity $\gamma_a$ and coercivity $\alpha_a$ constants defined as in~\eqref{constants_a}. 
Here, we use the ``partition of unity'' properties that $\Theta_k^\delta \geq 0$ and $\sum_k \Theta_k^\delta = 1$.

Next, we provide an estimate for the error caused by the approximation of the kernel $\kernel$ by $\kerneld$. In particular, we show that using \emph{Case~2} for sufficiently regular $u$, we obtain quadratic convergence in $\Delta\delta$, while \emph{Case~1} provides only a linear convergence order, but without any additional assumptions.

\begin{proposition}[\emph{Case 1}]\label{lemma:delta_conv}
Let $\kernel$ satisfy~\eqref{kernel_prop1}--\eqref{kernel_prop3} and let $\kerneld$ be defined as in~\eqref{kernel_approx_1}, then for any $\delta\in[\deltamin,\deltamax]$, and $u,v\in\V$ we obtain
\begin{equation}\label{error_bil_form}
\norms{\a(u,v;\delta)-\ad(u,v;\delta)}\leq C_a \, \Delta\delta \, \norm{u}_{L^2(\D)}\norm{v}_{L^2(\D)},
\end{equation}
where $C_a:= 4\omega_{n-1}C_\kernel$, with $\omega_{n-1}$ from~\eqref{volume_unit_sphere} and $C_\kernel$ from~\eqref{kernel_prop3}.
\end{proposition}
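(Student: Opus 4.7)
The overall strategy is to reduce the estimate to an application of hypothesis~\eqref{kernel_prop3}, mimicking the proof of the Lipschitz continuity of $a(\cdot,\cdot;\delta)$ but keeping the spatial norms at the $L^2$ level rather than passing to $\V$ via Poincar\'e. The starting observation is that in \emph{Case 1}, the functions $\Theta_k^\delta$ form a partition of unity where, for each $\delta\in[\deltamin,\deltamax]$, exactly one weight is equal to one and all others vanish. Consequently there exists an index $k^\star=k^\star(\delta)$ with $\ad(u,v;\delta)=a(u,v;\delta_{k^\star})$, and by the definition of the two selection rules in~\eqref{kernel_approx_1} this $\delta_{k^\star}$ is adjacent to $\delta$, so in particular
\[
|\delta-\delta_{k^\star}|\leq\Delta\delta.
\]

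Next, I would write the difference as an integral over the symmetric difference of the strips $S_\delta$ and $S_{\delta_{k^\star}}$:
\[
a(u,v;\delta)-\ad(u,v;\delta)=\pm\int_{S_\delta\triangle S_{\delta_{k^\star}}}(u(\x)-u(\xx))(v(\x)-v(\xx))\kernel(\x,\xx)\d(\xx,\x),
\]
where the sign depends on whether $\delta\gtrless\delta_{k^\star}$. Majorizing $\kernel$ by $\kernelh(|\x-\xx|)$ using~\eqref{kernel_prop2} and expanding $(u(\x)-u(\xx))(v(\x)-v(\xx))$ into four bilinear terms, each of the terms is handled identically. Taking for instance the cross term $u(\xx)v(\x)$, one substitutes $y=\xx-\x$ to obtain
\[
\int_{\R^n}|v(\x)|\int_{\{y\in\R^n\,:\,|y|\in I\}}|u(\x+y)|\kernelh(|y|)\d y\,\d\x,
\]
with $I$ the interval between $\delta$ and $\delta_{k^\star}$; applying Cauchy--Schwarz in $\x$ for each fixed $y$ (using that $u,v$ are extended by zero outside $\D$) and Fubini yields the bound $\|u\|_{L^2(\D)}\|v\|_{L^2(\D)}\int_{\{|y|\in I\}}\kernelh(|y|)\d y$. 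Passing to polar coordinates gives a factor $\omega_{n-1}$ and a radial integral $\int_I\rho^{n-1}\kernelh(\rho)\d\rho$, which by hypothesis~\eqref{kernel_prop3} is at most $C_\kernel|\delta-\delta_{k^\star}|\leq C_\kernel\Delta\delta$. Summing the four identical contributions produces exactly $C_a=4\omega_{n-1}C_\kernel$.

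I do not expect a genuine obstacle here, since the computation is essentially the one already carried out for Lipschitz continuity of $a$ at the start of Section~3; the only subtle point is the first step, namely verifying from the case split in~\eqref{kernel_approx_1} that the two rules are mutually exclusive, exhaustive on $[\deltamin,\deltamax]$, and always select a grid point $\delta_{k^\star}$ within distance $\Delta\delta$ of $\delta$. Once this is observed, the estimate follows by a straightforward application of Fubini, Cauchy--Schwarz, and the radial bound~\eqref{kernel_prop3}; no use of Poincar\'e or of the coercivity/continuity constants in~\eqref{constants_a} is needed, which is what allows the $L^2$-valued right-hand side in~\eqref{error_bil_form}.
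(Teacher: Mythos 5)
Your proposal is correct and follows essentially the same route as the paper: both reduce the difference to an integral of $|\kernel-\kerneld|\le\kernelh$ over the annular strip between $\delta$ and the single active grid point, pass to polar coordinates, and invoke~\eqref{kernel_prop3} to bound the radial integral by $C_\kernel\Delta\delta$, with Cauchy--Schwarz at the $L^2$ level producing the factor $4\omega_{n-1}$. The only cosmetic difference is that you expand the product into four cross terms and estimate each, whereas the paper applies Cauchy--Schwarz to the product $(u'-u)(v'-v)$ directly and keeps both weights $\Theta_{k-1}^\delta,\Theta_k^\delta$ symbolically instead of naming the selected point $\delta_{k^\star}$.
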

\begin{proof}
For simplicity of notation we introduce the following abbreviations: $u:=u(\x)$, $u^{\prime}:=u(\xx)$, and $\gamma:=\gamma(\x,\xx;\delta)$. Let $\delta\in[\delta_{k-1},\delta_k]$, $k=1,\dots,K$. Then, invoking the definition of $\kerneld$~\eqref{kernel_approx_gen} and~\eqref{kernel_prop1}--\eqref{kernel_prop3}, for any $u,v\in\V$, we obtain
\begin{multline*}
\norms{\a(u,v;\delta)-\ad(u,v;\delta)}
=\norms{\int_{S_{\delta_k}\setminus S_{\delta_{k-1}}}(u^{\prime}-u)(v^{\prime}-v)(\kernel-\kerneld)}\\
\leq 4\left(\int_{S_{\delta_k}\setminus S_{\delta_{k-1}}}u^2\norms{\kernel-\kerneld}\right)^{{1}/{2}}\left(\int_{S_{\delta_k}\setminus S_{\delta_{k-1}}}v^2\norms{\kernel-\kerneld}\right)^{{1}/{2}}\\
\leq 4\omega_{n-1}\left(\Theta_{k-1}^\delta(\delta)\int_{\delta_{k-1}}^\delta\rho^{n-1}\kernelh(\rho)\d\rho+\Theta_k^{\delta}(\delta)\int_{\delta}^{\delta_k}\rho^{n-1}\kernelh(\rho)\d\rho\right)\norm{u}_{L^2(\D)}\norm{v}_{L^2(\D)}\\
\leq 4\omega_{n-1} C_\kernel(\delta_k-\delta_{k-1})\norm{u}_{L^2(\D)}\norm{v}_{L^2(\D)}.
\end{multline*}
Taking the maximum over $k$ yields the desired result.
\end{proof}

For {\it Case~2} we obtain the following result.  
\begin{proposition}[\emph{Case 2}]\label{lemma:delta_conv2}
Let $\kernel$, $u$ be such that conditions of Theorem~\ref{thm:reg_delta} hold, and let $\kerneld$ be defined as in~\eqref{kernel_approx_2}, then for any $\delta\in[\deltamin,\deltamax]$, and $u,v\in\V$ we obtain that
\begin{equation}\label{error_bil_form2}
\norms{\a(u,v;\delta)-\ad(u,v;\delta)}\leq L_{a^{\prime}} (\Delta\delta)^2\,\norm{u}_{H^1_0(\D)}\norm{v}_{L^2(\D)},
\end{equation}
where $L_{a^{\prime}}$ is Lipschitz continuity constant defined in~\eqref{eq:lipschitz_der_a}.
\end{proposition}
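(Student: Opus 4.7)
The starting point is to observe that, by the hat-function structure of the weights in~\eqref{kernel_approx_2} and the representation~\eqref{bil_form_mu}, for any fixed $u,v\in\V$ and any $\delta\in(\delta_{k-1},\delta_k]$ one has
\[
\ad(u,v;\delta) = \frac{\delta_k-\delta}{\delta_k-\delta_{k-1}}\,a(u,v;\delta_{k-1}) + \frac{\delta-\delta_{k-1}}{\delta_k-\delta_{k-1}}\,a(u,v;\delta_k).
\]
Thus $\delta\mapsto\ad(u,v;\delta)$ is precisely the piecewise linear interpolant, on the grid $\{\delta_k\}_{k=0}^{K}$, of the scalar function $g(\delta):=a(u,v;\delta)$ defined on $[\deltamin,\deltamax]$. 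The claim therefore reduces to a classical interpolation-error statement for $g$.

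Under the hypotheses of Theorem~\ref{thm:reg_delta}, $g$ is differentiable with $g^{\prime}(\delta)=a^{\prime}_\delta(u,v;\delta)$, and the Lipschitz bound~\eqref{eq:lipschitz_der_a} furnishes a Lipschitz modulus for $g^{\prime}$ that is dominated by $L_{a^{\prime}}\norm{u}_{H^1_0(\D)}\norm{v}_{L^2(\D)}$. In particular $g\in C^{1,1}([\deltamin,\deltamax])$, and what remains is to prove the standard $O(h^2)$ error bound for linear interpolation of a scalar $C^{1,1}$ function on an interval of length $h=\delta_k-\delta_{k-1}\le\Delta\delta$.

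To extract this bound I would argue directly, restricting to a subinterval $[\delta_{k-1},\delta_k]$ and setting $e(\delta):=g(\delta)-\ad(u,v;\delta)$. Since $e(\delta_{k-1})=e(\delta_k)=0$, Rolle's theorem produces a $\xi\in(\delta_{k-1},\delta_k)$ with $e^{\prime}(\xi)=0$. Because the linear interpolant has constant derivative in $\delta$, the function $e^{\prime}$ inherits the Lipschitz modulus of $g^{\prime}$, so for any $\delta\in[\delta_{k-1},\delta_k]$,
\[
|e(\delta)| = \left|\int_{\delta_{k-1}}^{\delta}\bigl(e^{\prime}(t)-e^{\prime}(\xi)\bigr)\d t\right|
\leq L_{a^{\prime}}\norm{u}_{H^1_0(\D)}\norm{v}_{L^2(\D)}\int_{\delta_{k-1}}^{\delta}|t-\xi|\d t
\leq L_{a^{\prime}}\norm{u}_{H^1_0(\D)}\norm{v}_{L^2(\D)}\,(\Delta\delta)^2.
\]
Taking the maximum over $k$ delivers~\eqref{error_bil_form2}.

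The one point requiring care is the interpretation of $L_{a^{\prime}}$ as a single constant: the expression in~\eqref{eq:lipschitz_der_a} involves $\delta_2^{n-1}$ and $\kernelh(\delta_1)$, so to apply the Lipschitz bound on every subinterval of the partition uniformly one must replace these quantities by their suprema over $[\deltamin,\deltamax]$ (all of which are finite under~\eqref{kernel_prop3} and the Lipschitz hypothesis~\eqref{eq:lipschitz_kernel}). This is the only real obstacle; the rest of the argument is a textbook interpolation estimate applied pointwise in $\delta$ with the parameters $u,v$ frozen.
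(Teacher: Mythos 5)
Your proposal is correct and follows essentially the same route as the paper: the paper likewise restricts to a subinterval $[\delta_{k-1},\delta_k]$, represents $a(u,v;\delta)$ via the fundamental theorem of calculus and $\ad(u,v;\delta)$ via the mean value theorem (whose intermediate point is exactly the Rolle point $\xi$ in your argument), and then integrates the Lipschitz bound~\eqref{eq:lipschitz_der_a} on $a^{\prime}_\delta$ to obtain the $(\Delta\delta)^2$ factor. Your closing remark about reading $L_{a^{\prime}}$ as a uniform constant over $[\deltamin,\deltamax]$ is a fair and worthwhile observation, but it does not alter the substance of the argument.
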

\begin{proof}
Using fundamental theorem of calculus, we can express for a given $u,v\in\V$ and $\delta\in[\delta_{k-1},\delta_k]$:
\begin{equation}
\a(u,v;\delta)=\a(u,v;\delta_{k-1})+\int_{\delta_{k-1}}^{\delta}\a_\delta^{\prime}(u,v;\xi)\d\xi.\label{eq:proof1}
\end{equation}
From the definition of $\ad(u,v;\delta)$, with $\Theta_k^{\delta}(\delta)$ specified as in Case 2, it follows that
\begin{align}
\ad(u,v;\delta)&=\a(u,v;\delta_{k-1})+\Theta_{k}^{\delta}(\delta)\left(\a(u,v;\delta_k)-\a(u,v,\delta_{k-1})\right)\nonumber\\
&=\a(u,v;\delta_{k-1})+(\delta-\delta_{k-1})\frac{\a(u,v;\delta_k)-\a(u,v,\delta_{k-1})}{\delta_k-\delta_{k-1}}\nonumber\\
&=\a(u,v;\delta_{k-1})+\int_{\delta_{k-1}}^{\delta}a^{\prime}_{\delta}(u,v;\hat{\delta}_{u,v})\d\xi,\label{eq:proof2}
\end{align}
where $\hat{\delta}_{u,v}\in[\delta_{k-1},\delta_k]$, using the mean value theorem. Subtracting~\eqref{eq:proof1} from~\eqref{eq:proof2} and using Lipschitz continuity of $a^{\prime}_{\kernel}(u,v;\delta)$ in Theorem~\ref{thm:reg_delta}, we obtain
\begin{align*}
\norms{\a(u,v;\delta)-\ad(u,v;\delta)}&\leq\int_{\delta_{k-1}}^{\delta_k}\norms{\a_\delta^{\prime}(u,v;\xi)-\a_\delta^{\prime}(u,v;\hat{\delta}_{u,v})}\d\xi.
\end{align*}
Applying the estimate~\eqref{eq:lipschitz_der_a} for $a_\delta^{\prime}$ and using 
$\lvert{\hat{\delta}_{u,v}- \xi}\rvert \leq \Delta\delta$ concludes the proof.
\end{proof}

\subsection{Error due to the affine kernel approximation}
We consider the problem related to the affine kernel $\kerneld$: For $\delta\in[\deltamin,\deltamax]$, find $u(\delta)\in\V$, such that
\begin{equation}
\ad(u,v;\delta)=\dual{f,v},\quad\forall v\in\V,\label{var_trunc_delta}
\end{equation}
where $\ad$ is defined in~\eqref{bil_form_mu}. Now, using the results of Proposition~\ref{lemma:delta_conv2} and Proposition~\ref{lemma:delta_conv}, we provide the error in the solution caused by the affine approximation.
\begin{proposition}\label{prop:error_sol_affine_delta}
For $\delta\in[\deltamin,\deltamax]$, let $u(\delta),\widetilde{u}(\delta)\in V$ be the solutions of the problems~\eqref{nonl_linear_var},~\eqref{var_trunc_delta}, respectively. For {\it Case 1}, under conditions of Proposition~\ref{lemma:delta_conv}, and for \emph{Case 2}, under the conditions of Proposition~\ref{lemma:delta_conv2}, we obtain that
\begin{equation}
\norm{u(\delta)-\widetilde{u}(\delta)}_\V\leq\frac{C_P}{\alpha_a}\begin{cases}
{C_a\Delta\delta}\,\norm{u(\delta)}_{L^2(\D)} &\text{for {\it Case 1}},\\
{L_{a^{\prime}}(\Delta\delta)^2 }\,\norm{u(\delta)}_{H_0^1(\D)} &\text{for {\it Case 2}}.
\end{cases}\label{eq:error_sol_affine_delta}
\end{equation}
\end{proposition}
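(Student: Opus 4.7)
The plan is to use a standard Strang/Céa-style argument tailored to the fact that the perturbation is in the bilinear form rather than in the discrete space. Writing $e := u(\delta) - \widetilde{u}(\delta) \in \V$, I would first exploit that both $u$ and $\widetilde{u}$ solve a problem with the same right-hand side, so subtracting~\eqref{nonl_linear_var} and~\eqref{var_trunc_delta} gives the ``consistency'' identity
\begin{equation*}
\ad(e, v; \delta) = \ad(u, v; \delta) - a(u, v; \delta) \qquad \forall v \in \V,
\end{equation*}
where on the left I replaced $a(u,v;\delta)$ by using $a(u,v;\delta) = \dual{f,v} = \ad(\widetilde{u},v;\delta)$.

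Next, I would test with $v = e$ and apply the coercivity of $\ad$ on $\V \times \V$ with constant $\alpha_a$, which was established right after~\eqref{bil_form_mu} using the partition-of-unity property of $\Theta_k^\delta$ together with~\eqref{equiv_nonloc_spaces}. This yields
\begin{equation*}
\alpha_a \norm{e}_\V^2 \;\leq\; \ad(e, e; \delta) \;=\; \bigl| \ad(u, e; \delta) - a(u, e; \delta) \bigr|.
\end{equation*}
The right-hand side is exactly the kernel-approximation error from Proposition~\ref{lemma:delta_conv} in \emph{Case 1} and from Proposition~\ref{lemma:delta_conv2} in \emph{Case 2}, applied to the pair $(u, e)$. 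This produces
\begin{equation*}
\alpha_a \norm{e}_\V^2 \;\leq\;
\begin{cases}
C_a \, \Delta\delta \, \norm{u}_{L^2(\D)} \norm{e}_{L^2(\D)} & \text{\emph{Case 1}}, \\
L_{a^\prime} (\Delta\delta)^2 \, \norm{u}_{H^1_0(\D)} \norm{e}_{L^2(\D)} & \text{\emph{Case 2}}.
\end{cases}
\end{equation*}

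Finally, since $e \in \V$, the nonlocal Poincar\'e inequality~\eqref{poincare} gives $\norm{e}_{L^2(\D)} \leq C_P \norm{e}_\V$; dividing through by $\norm{e}_\V$ yields~\eqref{eq:error_sol_affine_delta}. The argument is essentially routine once the two kernel-approximation propositions are in hand, and there is no real obstacle: the only point to be slightly careful about is that the bounds in Propositions~\ref{lemma:delta_conv} and~\ref{lemma:delta_conv2} control the bilinear-form error in the $L^2(\D)$ norm of both arguments rather than the $\V$ norm, which is precisely why the Poincar\'e constant $C_P$ (and not merely $1/\alpha_a$) appears in the final estimate.
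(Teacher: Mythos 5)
Your proof is correct and is essentially the argument the paper defers to: it cites ``standard methods, similarly as in Proposition~\ref{aposteriori_delta},'' which is exactly this coercivity-plus-consistency estimate with the residual term absent. Your choice to use the coercivity of $\ad$ and to place the kernel-perturbation on $u(\delta)$ (rather than on $\widetilde{u}(\delta)$, as in the template of Proposition~\ref{aposteriori_delta}) is the variant that directly produces $\norm{u(\delta)}_{L^2(\D)}$, resp.\ $\norm{u(\delta)}_{H^1_0(\D)}$, on the right-hand side as stated, and your remark on why $C_P$ enters is accurate.
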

\begin{proof}
The proof can be obtained with standard methods, similarly as in Proposition~\ref{aposteriori_delta},
which will be provided later.
\end{proof}


\section{Regularity with respect to \texorpdfstring{$s$}{Lg}}\label{sec:regularity_s}
In this section we analyze the regularity of the solution of~\eqref{nonl_linear_var} with truncated fractional Laplace kernel~\eqref{kernel:FL_delta} with respect to the kernel parameter $s$. For simplicity of the presentation of our analysis we scale~\eqref{nonl_linear_var} by $c_{n,s}/2$. Then, for $s\in(0,1)$, $\delta\in(0,\infty]$, we seek $u\in\VD{s}$ such that for all $v\in\VD{s}$ 
\begin{align}
a(u,v;s)
:= \int_{S_\delta}\frac{\left(u(\x)-u(\xx)\right)\left(v(\x)-v(\xx)\right)}{\distn}\d(\xx,\x)
= \dual{f(s),v} \label{var_form_s}
\end{align}
where $f(s)= (2/c_{n,s}) F$ as defined in~\eqref{eq:RHS_s}. Throughout the next sections we also assume that $F\in H^{1/2-\varepsilon}(\D)$ for any $\varepsilon>0$.
For the sake of analysis, we consider first only the case of finite interaction radius $\delta < \infty$. The fractional Laplace case $\delta = \infty$ will be addressed later.

\begin{lemma}[Derivative of the bilinear form]\label{lemma:der_s_bilform}
Let $\delta\in(0,\infty)$ and $s_1,s_2\in (0,1)$ with $s\in(0,(s_1+s_2)/2)\subset(0,1)$.
Then, for $u\in\HD{s_2}$, $v\in\HD{s_1}$ the bilinear form $a(u,v;s)$ is infinitely many times differentiable, i.e., for $k=1,2,\dots$, there exist $a_s^{(k)}(u,v;s):=\frac{\d^k}{\d s^k}a(u,v;s)$, given by
\begin{align}
a_s^{(k)}(u,v;s):=(-2)^k\int_{S_{\delta}}\frac{\left(u(\x)-u(\xx)\right)\left(v(\x)-v(\xx)\right)\log^k(\dist)}{\distn}\d(\xx,\x).
\end{align}
Moreover, $a_s^{(k)}(u,v;s)$ is bounded and
\begin{equation}
\norms{a_s^{(k)}(u,v;s)}\leq C(k,\heps)\,\norm{u}_{\VD{s_2}}\norm{v}_{\VD{s_1}},\label{bound_der_a}
\end{equation}
where $C(k,\heps)=2^k\left(\left({k}/{(e\heps)}\right)^k+\delta^{\heps}(\log(\delta))^k_+\right)$ and $\heps=s_1+s_2-2s>0$.
\end{lemma}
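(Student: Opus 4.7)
The plan is to prove the formula for $a_s^{(k)}$ by formal differentiation under the integral sign and then derive the bound separately, using the bound to justify the differentiation a posteriori via dominated convergence.

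First I would compute the derivatives of the kernel pointwise: since $\frac{\d}{\d s}\norms{\x-\xx}^{-(n+2s)} = -2\log(\dist)\,\norms{\x-\xx}^{-(n+2s)}$, iteration immediately gives the candidate formula for $a_s^{(k)}$ with the factor $(-2)^k\log^k(\dist)$. To justify moving the derivative inside the integral, I would form the difference quotient $h^{-1}[a^{(k-1)}_s(u,v;s+h)-a^{(k-1)}_s(u,v;s)]$, observe that the integrand converges pointwise to the claimed limit, and dominate it uniformly in $h$ for $|h|$ small enough so that the shifted exponent $s+h$ still lies in $(0,(s_1+s_2)/2)$; the bound derived below (applied with $\hat\varepsilon$ slightly decreased and $k$ replaced by $k+1$) provides the required integrable majorant.

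The key estimate (\ref{bound_der_a}) is the crux. I would factor the weight appearing in $a_s^{(k)}$ as
\begin{equation*}
\frac{\norms{\log(\dist)}^k}{\distn}
= \dist^{\,\hat\varepsilon}\,\norms{\log\dist}^k \cdot \frac{1}{\dist^{(n+2s_1)/2}} \cdot \frac{1}{\dist^{(n+2s_2)/2}},
\end{equation*}
which uses precisely $n+2s = (n+2s_1)/2+(n+2s_2)/2 - \hat\varepsilon$. Applying the Cauchy--Schwarz inequality on $S_\delta$ distributes the last two factors onto $u$ and $v$, producing exactly $\norm{u}_{\VD{s_2}}\norm{v}_{\VD{s_1}}$, and leaves the scalar factor $\sup_{r\in(0,\delta]}r^{\hat\varepsilon}|\log r|^k$ out front, multiplied by the constant $2^k$ from the differentiation.

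The remaining step is a one-variable calculus exercise: on $(0,1)$ the function $r^{\hat\varepsilon}(-\log r)^k$ is maximized at $r=e^{-k/\hat\varepsilon}$ with maximum value $(k/(e\hat\varepsilon))^k$; on $[1,\delta]$ (if $\delta>1$) the function $r^{\hat\varepsilon}(\log r)^k$ is increasing and bounded by $\delta^{\hat\varepsilon}(\log\delta)^k$, while for $\delta\le 1$ the second contribution is absorbed into the positive-part notation $(\log\delta)^k_+$. Adding the two contributions yields exactly $C(k,\heps)=2^k((k/(e\heps))^k+\delta^{\heps}(\log\delta)_+^k)$. I expect the main obstacle to be purely bookkeeping: keeping track of the exponent splitting so that the two halves really reproduce the $\VD{s_1}$ and $\VD{s_2}$ seminorms (rather than, say, $\HD{s_i}$ norms, which would have required the equivalence (\ref{equiv_1})), and verifying that the $\heps>0$ assumption is what makes the apparently divergent weight $r^{-(n+2s)}|\log r|^k$ integrable against the Sobolev seminorms at the prescribed regularities. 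Once the bound is in hand, a standard induction on $k$ upgrades the differentiability statement to $C^\infty$.
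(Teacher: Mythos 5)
Your proposal is correct and follows essentially the same route as the paper: formal differentiation justified a posteriori by dominated convergence, the exponent splitting $n+2s=(n+2s_1)/2+(n+2s_2)/2-\heps$ combined with Cauchy--Schwarz to produce $\norm{u}_{\VD{s_2}}\norm{v}_{\VD{s_1}}$, and the elementary optimization of $\xi^{\heps}\norms{\log\xi}^k$ (which the paper delegates to Proposition~\ref{appx: prop1}) to obtain $C(k,\heps)$.
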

\begin{proof}
The formal derivation of the derivative follows from a direct computation. In order to justify the differentiation under the integral, we can apply the dominated convergence theorem. For this purpose, we first prove the second statement. For $u\in\HD{s_2}$, $v\in\HD{s_1}$, using with the H{\"o}lder inequality we can estimate
\begin{multline}
\norms{a_s^{(k)}(u,v;s)}\leq 2^k\norms{\int_{S_{\delta}}\frac{\left(u(\x)-u(\xx)\right)\left(v(\x)-v(\xx)\right)\log^k(\dist)}{\dist^{n+2s+\heps}\dist^{-\heps}}\d(\xx,\x)}\\
\leq 2^k\sup_{\xi\in[0,\delta]}\frac{\norms{\log^k(\xi)}}{\xi^{-\heps}}\int_{S_{\delta}}
\frac{\norms{u(\x)-u(\xx)}\norms{v(\x)-v(\xx)}}{\dist^{n/2+s_2}\dist^{n/2+s_1}}\d(\xx,\x)\\
\leq 2^k \sup_{\xi\in[0,\delta]}\xi^{\heps}\lvert{\log^k(\xi)}\rvert \; \norm{u}_{\VD{s_2}}\norm{v}_{\VD{s_1}},
\end{multline}
where we have set $\xi = \dist \in [0,\delta]$.
Then, applying estimate~\eqref{estimate_log_k} from Appendix~\ref{app:aux} for the supremum, we obtain the desired result.
\end{proof}

Concerning the case $\delta = \infty$, we use the splitting
\begin{equation}
\label{split_bilinear}
a(u,v;\infty,s) = a(u,v;s) + C(\delta^\prime,n,s)(u,v)_{L^2(\D)},
\end{equation}
with $C(\delta^\prime,n,s) = (2\pi^{n/2})/(\Gamma(n/2)(\delta^\prime)^{2s}s)$, which is valid for all $\delta^\prime \geq \diam(\D)$; cf.~\cite{burkgunz2018} for a derivation of this simple identity. Then, a similar estimate as given above for $\delta < \infty$ can be given also in the fractional Laplace case.
\begin{corollary}
Using expression~\eqref{split_bilinear}, we obtain for any $\delta^\prime \geq \diam(\Omega)$ the derivative of the bilinear form corresponding to the fractional Laplacian $a(\cdot,\cdot;\infty,s)$ as
\begin{equation}
\frac{\d^k}{\d s^k}a(u,v;\infty,s)=\frac{\d^k}{\d s^k}a(u,v;
\delta^{\prime},s)+\frac{\d^kC(\delta^{\prime},n,s)}{\d s^k}(u,v)_{L^2(\D)}.
\end{equation}
Moreover, the estimate~\eqref{bound_der_a} remains valid with the constant defined as 
$C(k,\heps)=2^k\left(\left({k}/{(e\heps)}\right)^k+(\delta^\prime)^{\heps}(\log(\delta^\prime))^k_+ + C^2_P C^{(k)}_s(\delta^\prime,n,s)\right)$.
\end{corollary}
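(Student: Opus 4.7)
The plan is to differentiate the identity~\eqref{split_bilinear} $k$ times in $s$ and to bound the two resulting pieces separately. For the first piece, $a(u,v;\delta^\prime,s)$, everything has already been done: since $\delta^\prime\geq\diam(\D)<\infty$, Lemma~\ref{lemma:der_s_bilform} applies verbatim with $\delta$ replaced by $\delta^\prime$ and supplies both the $k$-fold differentiability with the stated integral representation and the bound
\[
\left\lvert\tfrac{d^k}{ds^k}a(u,v;\delta^\prime,s)\right\rvert
\leq 2^k\!\left(\left(\tfrac{k}{e\heps}\right)^k+(\delta^\prime)^{\heps}(\log(\delta^\prime))^k_{+}\right)\norm{u}_{\VD{s_2}}\norm{v}_{\VD{s_1}},
\]
which already produces the first two summands of the target constant $C(k,\heps)$.

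For the scalar prefactor $C(\delta^\prime,n,s)=(2\pi^{n/2})/(\Gamma(n/2)(\delta^\prime)^{2s}s)$ I would first observe that it is $C^\infty$ in $s\in(0,1)$, since it is the product of $s^{-1}$ and $(\delta^\prime)^{-2s}$. Its $k$-th derivative is then computed classically by the Leibniz rule, giving a finite sum of terms of the form $s^{-(j+1)}(-2\log\delta^\prime)^{k-j}(\delta^\prime)^{-2s}$, which remains uniformly bounded on any closed subinterval of $(0,1)$; I would define $C^{(k)}_s(\delta^\prime,n,s)$ simply as a convenient upper bound for $\lvert d^kC(\delta^\prime,n,s)/ds^k\rvert$. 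Upon differentiating~\eqref{split_bilinear} this prefactor multiplies $(u,v)_{L^2(\D)}$, which I would estimate by Cauchy--Schwarz followed by two applications of the nonlocal Poincar\'e inequality~\eqref{poincare} (valid for the fractional-Laplace type kernels) to obtain $\lvert(u,v)_{L^2(\D)}\rvert\leq C_P^2\norm{u}_{\VD{s_2}}\norm{v}_{\VD{s_1}}$. Adding the two contributions produces exactly the announced expression for $C(k,\heps)$.

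The main (and essentially only) obstacle is a bookkeeping one: one must check that term-by-term differentiation of~\eqref{split_bilinear} is legitimate. Since~\eqref{split_bilinear} is a pointwise equality of bilinear forms at each fixed $s$, differentiating it in $s$ at fixed $u,v$ is automatic, and the exchange of derivative and integral in the first summand is precisely what Lemma~\ref{lemma:der_s_bilform} has already established via dominated convergence. No genuinely new difficulty arises for the case $\delta=\infty$, because the contribution from the infinite interaction range is absorbed into a smooth $L^2$-based correction to which the Poincar\'e inequality applies directly.
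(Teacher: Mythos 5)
Your argument is correct and is exactly the intended one: the paper states this corollary without proof, as an immediate consequence of differentiating the splitting~\eqref{split_bilinear} term by term, applying Lemma~\ref{lemma:der_s_bilform} with $\delta=\delta^\prime<\infty$ to the truncated part, and bounding the elementary $k$-th derivative of the smooth scalar factor $C(\delta^\prime,n,s)$ together with $\norms{(u,v)_{L^2(\D)}}\leq C_P^2\norm{u}_{\VD{s_2}}\norm{v}_{\VD{s_1}}$ via Cauchy--Schwarz and the Poincar\'e inequality~\eqref{poincare}. Your bookkeeping, including the justification of term-by-term differentiation and the reading of $C^{(k)}_s(\delta^\prime,n,s)$ as an upper bound for $\lvert\d^kC(\delta^\prime,n,s)/\d s^k\rvert$, matches what the paper presupposes.
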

However, this estimate is only of minor relevance for the purposes of this paper, since an affine separable approximation of the bilinear form can be based directly on~\eqref{split_bilinear}, because the last term is already affine in the parameters, and only the truncated bilinear form for $\delta < \infty$ needs to be further approximated.

\begin{theorem}[$s$-regularity of the solution]
Let for any $s\in(0,1)$, fixed $\delta \in [\deltamin,+\infty]$, and $f$ as in~\eqref{eq:RHS_s} with $F \in H^{1/2-\varepsilon}(\D)$ for any $\varepsilon> 0$, $u(s)\in\VD{s}$ be the solution of~\eqref{var_form_s}. Then, $u$ is infinitely many times differentiable w.r.t.\ $s$ with values in $\VD{s}$. Thus, it holds $u\in C^{\infty}((0,1),L^2(\Omega))$.
\end{theorem}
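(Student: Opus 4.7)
I would proceed by induction on the order of differentiation $k$, at each stage solving a sensitivity equation obtained by $k$-fold formal differentiation of $a(u(s),v;s)=\dual{f(s),v}$. Two structural ingredients are in place: Lemma~\ref{lemma:der_s_bilform} supplies the infinite differentiability of $s\mapsto a(u,v;s)$, and the $C^\infty$-dependence of $c_{n,s}$ on $s\in(0,1)$ via the Gamma function gives $f^{(k)}(s)=\bigl(\tfrac{\d^k}{\d s^k}(2/c_{n,s})\bigr)F\in L^2(\D)$ for every $k$. In addition, applying Theorem~\ref{thm:regularity_linear} with $r=1/2-\varepsilon$ yields the extra spatial regularity $u(s)\in\HD{s+1/2-\varepsilon}$, which is exactly what will make Lemma~\ref{lemma:der_s_bilform} applicable to the sensitivity right-hand sides.

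For $k=1$, the sensitivity equation
\[
a(u'(s),v;s) = \dual{f'(s),v} - a_s^{(1)}(u(s),v;s),\qquad v\in\VD{s},
\]
has a right-hand side bounded on $\VD{s}$: $f'(s)\in L^2(\D)\hookrightarrow(\VD{s})'$, and $a_s^{(1)}(u(s),\cdot;s)$ is controlled by Lemma~\ref{lemma:der_s_bilform} taken with $s_1=s$ and $s_2=s+1/2-\varepsilon>s$, yielding the strict inequality $s_1+s_2>2s$ required by the lemma. Coercivity of $a(\cdot,\cdot;s)$ on $\VD{s}$ then gives a unique $u'(s)\in\VD{s}$ by Lax--Milgram. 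To identify $u'(s)$ with the true derivative, I would subtract the equations at $s$ and $s+h$, divide by $h$, and pass to the limit $h\to 0$ using a mean-value representation of $a_s^{(1)}$ in the spirit of Proposition~\ref{lemma:delta_conv2}, together with the smoothness of $c_{n,s}$.

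The inductive step for $k\geq 2$ rests on Leibniz's rule,
\[
a(u^{(k)}(s),v;s) = f^{(k)}(s)(v) - \sum_{j=1}^{k}\binom{k}{j}\, a_s^{(j)}(u^{(k-j)}(s),v;s),
\]
and proceeds by the same Lax--Milgram argument once each $u^{(k-j)}(s)$ for $j\geq 1$ is shown to carry enough spatial regularity to feed back into Lemma~\ref{lemma:der_s_bilform}. The case $\delta=+\infty$ is handled through the splitting~\eqref{split_bilinear}: its correction term is a $C^\infty$ scalar function of $s$ times an $L^2$ inner product and therefore contributes only smooth affine terms upon differentiation, so only the truncated part requires the preceding analysis.

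The main obstacle is the spatial-regularity propagation through the sensitivities. For $k-j\geq 1$, applying Lemma~\ref{lemma:der_s_bilform} to $a_s^{(j)}(u^{(k-j)}(s),\cdot;s)$ with test functions in $\VD{s}$ forces $u^{(k-j)}(s)\in\HD{s_2}$ with $s_2>s$ strictly, since the condition $s_1+s_2>2s$ with $s_1=s$ cannot be met otherwise. This requires a regularity bootstrap for each sensitivity equation, extending Theorem~\ref{thm:regularity_linear} to right-hand sides of log-weighted nonlocal form; tracking a uniformly positive excess regularity $\sigma_k>0$ across the induction and choosing $\varepsilon>0$ small enough to accommodate the accumulated loss is the delicate part. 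Once accomplished, the induction closes in $\VD{s}$ at every $s\in(0,1)$, and the final conclusion $u\in C^\infty((0,1),L^2(\D))$ follows via the Poincar\'e inequality~\eqref{poincare}.
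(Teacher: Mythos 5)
Your overall architecture -- sensitivity equations obtained by Leibniz differentiation, Lax--Milgram at each stage, the splitting~\eqref{split_bilinear} for $\delta=\infty$, and the identification of the spatial-regularity propagation as the crux -- matches the paper's proof. But the crux is precisely what you leave unexecuted: you flag the bootstrap as ``the delicate part'' and assert that ``once accomplished, the induction closes,'' without supplying the mechanism. Moreover, your first step already fails to set up the induction. Taking $s_1=s$, $s_2=s+1/2-\varepsilon$ in Lemma~\ref{lemma:der_s_bilform} only shows that $a_s^{(1)}(u(s),\cdot;s)$ is bounded on $\VD{s}$, hence only that $u'(s)\in\VD{s}$; it yields no excess spatial regularity for $u'(s)$, so at $k=2$ the term $a_s^{(1)}(u'(s),\cdot;s)$ cannot be controlled (you would need $u'(s)\in\HD{s_2}$ with $s_2>s$, which you have not established).

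The missing idea is \emph{not} an extension of Theorem~\ref{thm:regularity_linear} to ``log-weighted nonlocal right-hand sides.'' It is the choice of the \emph{test-space} exponent strictly below $s$. Fix $\tau^\star>0$ small with $s+\tau^\star<\min\{1,s+1/2-\varepsilon\}$ and $s-\tau^\star/2>0$. Since $u(s)\in\HD{s+\tau^\star}$ by Theorem~\ref{thm:regularity_linear}, apply~\eqref{bound_der_a} with $s_1=s-\tau^\star/2$, $s_2=s+\tau^\star$ (so $\heps=\tau^\star/2>0$): this exhibits $a_s^{(1)}(u(s),\cdot;s)$ as a bounded functional on $\HD{s-\tau^\star/2}$, i.e.\ an element of $H^{-s+\tau^\star/2}(\D)$. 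Theorem~\ref{thm:regularity_linear} with $r=-s+\tau^\star/2\geq -s$ then gives $u'(s)\in\HD{s+\tau^\star/2}$ -- a genuine gain above $\HD{s}$ that feeds the next step. Iterating with $s_1=s-\tau^\star/2^k$ yields $u^{(k)}_s(s)\in\HD{s+\tau^\star/2^k}$: the excess regularity halves at each stage but never vanishes, which is exactly what closes the induction. Separately, your identification of $u'(s)$ with the actual limit of difference quotients should be done by establishing a uniform bound on $(u(s+\tau)-u(s))/\tau$ in $\VD{s}$ (again using Lemma~\ref{lemma:der_s_bilform} with the higher regularity of $u(s+\tau)$) and passing to a weak limit, rather than via a mean-value representation in the spirit of Proposition~\ref{lemma:delta_conv2}, which concerns the $\delta$-dependence and is not available here.
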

\begin{proof}
First, select any $\tau^\star > 0$ with $s+\tau^\star < \min\{1,s+1/2-\varepsilon\}$.
By subtracting equations~\eqref{var_form_s} with $u(s)$ and with $u(s+\tau)$, both tested with $v\in\VD{s+\tau^{\star}}$, for some $0 < \tau \leq \tau^{\star}$, we obtain the following relation:
\begin{multline}
a\left(\frac{u(s+\tau)-u(s)}{\tau},v;s\right)
= \frac{1}{\tau}\dual{f(s+\tau)-f(s),v}\\
-\frac{1}{\tau}\big[a(u(s+\tau),v;s+\tau)-a(u(s+\tau),v;s)\big].\label{eq:sensitivity_limit}
\end{multline}
It is clear that $f(s)$ in~\eqref{var_form_s} is arbitrarily often differentiable in $s$, with all derivatives, denoted by $f_s^{(k)}(s)$, $k\in\mathbb{N}$, being in $H^{1/2-\varepsilon}(\D)$, $\varepsilon>0$.
Then, the right-hand side of~\eqref{eq:sensitivity_limit} is uniformly bounded in $\tau$. Indeed, taking $v = \d_\tau u = (u(s+\tau)-u(s))/\tau$, invoking the higher regularity of the solution from Theorem~\ref{thm:regularity_linear}, and using the fundamental theorem of calculus together with Lemma~\ref{lemma:der_s_bilform}, we obtain for all $\tau\leq\tau^\star/3$
\begin{equation*}
\norm{\d_\tau u}_{\VD{s}}\leq L_f + C(1,\tau^\star/3)\norm{u(s+\tau)}_{\VD{s+\tau^\star}}
\leq L_f + C\norm{f(s+\tau)}_{H^{1/2-\varepsilon}(\D)},
\end{equation*}
for $\varepsilon>0$ sufficiently small. Here, $L_{f}$ is the Lipschitz constant of $f(s)$.
Now letting $\tau\to 0$ in~\eqref{eq:sensitivity_limit} we obtain \(\d_t u \rightharpoonup u^{\prime}_s\) together with the following sensitivity equation:
\begin{equation}
a(u^{\prime}_s(s),v;s)=f^{\prime}_s(v;s)-a^{\prime}_s(u(s),v;s),\quad\forall v\in\VD{s+\tau^{\star}}.
\label{eq:sensitivity_s}
\end{equation}
By the density of $\VD{s+\tau^{\star}}$ in $\VD{s}$, we obtain that the above equation also holds for all $v\in\VD{s}$.
From Theorem~\ref{thm:regularity_linear}, we have that $u(s)\in\HD{s+1/2-\varepsilon}\subset\HD{s+\tau^\star}$.
Now, we potentially decrease $\tau^\star$, such that additionally $s - \tau^\star/2 > 0$.
Applying~\eqref{bound_der_a} with $s_1 = s - \tau^\star/2$, $s_2 = s + \tau^\star$, we obtain that
\begin{equation*}
\norms{a^{\prime}_s(u(s),v;s)}\leq C(1,\tau^\star/2)\norm{u(s)}_{\HD{s+\tau^\star}}\norm{v}_{\HD{s-\tau^\star/2}},
\end{equation*}
which also implies that $a^{\prime}_s(u(s),\cdot;s)\in H^{-s+\tau^\star/2}(\D)$.
Then, by denoting by $R(v)$ the right-hand side of~\eqref{eq:sensitivity_s}, we obtain that $R\in H^{-s+\tau^\star/2}(\D)$.
Applying Theorem~\ref{thm:regularity_linear}, it follows that $u^{\prime}_s(s)\in\HD{s+\tau^\star/2}$.
Similarly, the second sensitivity $u^{(2)}_s(s)$ fulfills
\begin{equation}
a(u_s^{(2)}(s),v;s)=f^{(2)}_{s}(v;s)-2a^{\prime}_s(u^{\prime}_s(s),v;s)-a^{(2)}_s(u(s),v;s),\quad\forall v\in\VD{s}.\label{eq:sensitivity_s2}
\end{equation}
Then, applying~\eqref{bound_der_a} for $a^{\prime}_s(u^{\prime}_s(s),v;s)$ with $s_1=s-\tau^\star/4$, $s_2=s+\tau^\star/2$ and for $a^{(2)}_s(u(s),v;s)$ with $s_1=s-\tau^\star/4$, $s_2=s+\tau^\star$, we obtain that the right-hand side of~\eqref{eq:sensitivity_s2} is in $H^{-s+\tau^\star/4}(\D)$. Again, applying regularity result~\eqref{nonl_regularity}, we conclude that $u^{(2)}_s(s)\in\HD{s+\tau^\star/4}$. Proceeding iteratively and using an induction argument, it is easy to show that $k$-th derivative $u^{(k)}_s(s)\in\VD{s}$, is the unique solution of
\begin{equation*}
a(u^{(k)}_s(s),v;s)=f^{(k)}_s(v;s)-\sum_{j=1}^k\binom{k}{j}a^{(j)}_s(u^{(k-j)}_s(s),v;s),\quad\forall v\in\VD{s},
\end{equation*}
and, in addition, $u^{(k)}_s(s)\in\HD{s+\tau^\star/2^k}$. By embedding the solution spaces into the common space $L^2(\Omega)$, we obtain the last property.
\end{proof}


\section{Affine approximation with respect to \texorpdfstring{$s$}{Lg}}\label{sec:affine_s}

Approximation with respect to $s$ poses similar difficulties as for the case of $\delta$. However, here, invoking the higher regularity properties, we can approximate the bilinear form with high order polynomials. Concretely, we adopt the Chebyshev interpolation for $a(\cdot,\cdot;s)$ with respect to $s$ on the interval $[\smin,\smax]\subset(0,1)$.

For $m\in\mathbb{N}$, we consider the following partitioning of the interval $[\smin,\smax]$:
$0<\smin:=s_0<s_1<\cdots<s_{M}:=\smax<\infty$, where $s_m$ are the Chebyshev maximal points on the interval $[\smin,\smax]$, given by $s_m = (1/2)(\smin + \smax) - (1/2)(\smax-\smin) \cos((m/M) \, \pi)$, $m=1,\dots,M$. Then, for a given $\delta\in(0,\infty)$, $s\in(0,1)$, and sufficiently regular $u,v$, we approximate 
\begin{align}
a(u,v;s)\approx\adM(u,v;s):=\sum_{m=0}^M\Theta_m^s(s)a(u,v;s_m),\quad \Theta_m^s(s):=\prod_{\substack{j=0\\j\neq m}}^{M}\frac{s-s_j}{s_m-s_j}.\label{bilform_sapprox}
\end{align}
Note that, in order for $\adM$ to be well-defined, the regularity that both $u$ and $v$ are in $\HD{s}$ is not sufficient, but, for instance, taking $u,v\in\HD{\smax}$ is sufficient. We will discuss this more thoroughly in the following.
\begin{remark}
We note that for the fractional Laplace case $\delta = \infty$, we can obtain an affine separable bilinear form by combining the splitting~\eqref{split_bilinear} for a choice of $\delta \geq \diam(\Omega)$ with the approximation~\eqref{bilform_sapprox}. 
\end{remark}
Thus, we will restrict attention to the case $\delta < \infty$ in the following.
\begin{lemma}[Interpolation error]\label{lemma:s_int_error}
Let $u\in\HD{s_2}$, $v\in\HD{s_1}$ and $s\in[\smin,\smax]\subset(0,1)$, such that $(s_1+s_2)/2-1/2<\smax<(s_1+s_2)/2$, $s_1,s_2\in(0,1)$. Then, for $\delta\in(0,\infty)$, we obtain
\begin{equation}
\norms{a(u,v;s)-\adM(u,v;s)}\leq\sigma^{M+1}C(\delta)\norm{u}_{\VD{s_2}}\norm{v}_{\VD{s_1}},
\label{eq:s_error}
\end{equation}
where $\sigma=(\smax-\smin)/(2\heps(\smax))$, $C(\delta)=4(e^{-1}+\delta^{\heps(\smin)+1})$ if $\delta>1$ and $C(\delta)=4e^{-1}$ if $\delta\leq 1$, where $\heps(s)=s_1+s_2-2s$.
\end{lemma}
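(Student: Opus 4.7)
\medskip

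The plan is to bound the Chebyshev interpolation error via the classical remainder formula together with the derivative bound already established in Lemma~\ref{lemma:der_s_bilform}. More precisely, for fixed $u,v$, the map $s\mapsto a(u,v;s)$ is smooth on an open neighborhood of $[\smin,\smax]$ by Lemma~\ref{lemma:der_s_bilform} (the hypothesis $\smax<(s_1+s_2)/2$ guarantees $\heps(s)>0$ throughout the interval, and the condition $(s_1+s_2)/2-1/2<\smax$ is the usual room needed to apply the derivative bound to order $M+1$). Writing $P_M[f](s):=\sum_{m=0}^M\Theta_m^s(s)f(s_m)$ for the Lagrange interpolant at the Chebyshev--Lobatto nodes $s_0,\dots,s_M$, the standard remainder formula gives, for every $s\in[\smin,\smax]$,
\begin{equation*}
a(u,v;s)-\adM(u,v;s)=\frac{1}{(M+1)!}\,a_s^{(M+1)}(u,v;\xi)\prod_{m=0}^{M}(s-s_m),
\end{equation*}
for some $\xi\in(\smin,\smax)$.

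Next I would estimate each factor. For the nodal polynomial at Chebyshev-type nodes on $[\smin,\smax]$ one has the sharp bound
$\max_{s\in[\smin,\smax]}\prod_{m=0}^M|s-s_m|\le 2\bigl((\smax-\smin)/4\bigr)^{M+1}$,
and for the derivative I would use~\eqref{bound_der_a} with $k=M+1$, noting that the constant $C(k,\heps(s))$ is maximized at $s=\smax$ because $\heps$ is decreasing in $s$; thus $C(M+1,\heps(\xi))\le C(M+1,\heps(\smax))$. Combined with Stirling's inequality $(M+1)!\ge ((M+1)/e)^{M+1}$, the first ``algebraic'' part of $C(M+1,\heps(\smax))=2^{M+1}\bigl((M+1)/(e\heps(\smax))\bigr)^{M+1}$ contributes exactly
\begin{equation*}
\frac{1}{(M+1)!}\cdot 2^{M+1}\left(\frac{M+1}{e\,\heps(\smax)}\right)^{M+1}\le\frac{2^{M+1}}{\heps(\smax)^{M+1}},
\end{equation*}
so that after multiplying by the nodal bound, this piece becomes $\sigma^{M+1}$ times a harmless constant. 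The second ``logarithmic'' part $2^{M+1}\delta^{\heps(\smax)}(\log\delta)^{M+1}_+$ is only nonzero when $\delta>1$ and, upon dividing by $(M+1)!$ and using $(\log\delta)^{M+1}/(M+1)!\le\delta$, yields a term bounded by $\sigma^{M+1}\delta^{\heps(\smin)+1}$ up to constants; here I would use monotonicity $\heps(\smax)\le\heps(\smin)$ and the elementary inequality $x^{M+1}/(M+1)!\le e^x$ with $x=\log\delta$.

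Collecting everything and bounding numerical constants uniformly in $M$ yields
\begin{equation*}
|a(u,v;s)-\adM(u,v;s)|\le\sigma^{M+1}\,C(\delta)\,\norm{u}_{\VD{s_2}}\norm{v}_{\VD{s_1}},
\end{equation*}
with $C(\delta)=4(e^{-1}+\delta^{\heps(\smin)+1})$ when $\delta>1$ and $C(\delta)=4e^{-1}$ when $\delta\le1$ (in the latter case the logarithmic part of $C(k,\heps)$ vanishes). The main obstacle I anticipate is tracking constants cleanly enough to reach the stated $C(\delta)$: in particular, the interplay between the $(\heps(\smax))^{-(M+1)}$ blow-up from the derivative bound and the factor $((\smax-\smin)/4)^{M+1}$ from the nodal polynomial has to combine to produce the factor $((\smax-\smin)/(2\heps(\smax)))^{M+1}$ without any spurious $M$-dependence, which forces one to absorb the Stirling slack exactly where indicated above, and to treat the $\delta>1$ and $\delta\le1$ cases separately so that the logarithmic term does not pollute the small-$\delta$ regime.
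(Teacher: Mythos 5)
Your proof follows essentially the same route as the paper's: the Chebyshev/Lagrange remainder formula, the derivative bound~\eqref{bound_der_a} from Lemma~\ref{lemma:der_s_bilform}, Stirling's inequality to cancel the $\left((M+1)/(e\heps)\right)^{M+1}$ growth, and the estimate $\log^{M+1}(\delta)\leq\delta\,(M+1)!$ from Proposition~\ref{appx: prop1} for the logarithmic part, so the substance is correct. Two constant-level points need tightening: the nodes in~\eqref{bilform_sapprox} are Chebyshev \emph{extrema}, for which the nodal polynomial satisfies $\max_s\prod_m|s-s_m|\leq(\smax-\smin)^{M+1}/2^{2M}$, i.e.\ twice your quoted root-node bound; and $C(M+1,\heps(\xi))$ is \emph{not} maximized at $\xi=\smax$ when $\delta>1$ --- the logarithmic term increases with $\heps$ and so peaks at $\xi=\smin$ --- which forces the term-by-term maximization you in effect carry out afterwards; finally, to land exactly on $C(\delta)=4(e^{-1}+\delta^{\heps(\smin)+1})$ you need the refined form $k!\geq e\,(k/e)^k$ together with $\heps(\smax)\leq 1$, the latter being precisely what the hypothesis $(s_1+s_2)/2-1/2<\smax$ provides.
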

\begin{proof}
We present a proof for $\delta>1$, the case $\delta\leq 1$ follows along the same lines. Using the Chebyshev polynomial interpolation error estimate, and the bound~\eqref{bound_der_a}, we can estimate that
\begin{multline*}
\norms{a(u,v;s)-\adM(u,v;s)}\leq\frac{(\smax-\smin)^{M+1}}{2^{2M}(M+1) !}\max_{\xi\in[\smin,\smax]}\norms{a_s^{(M+1)}(u,v;\xi)}\\
\leq\frac{(\smax-\smin)^{M+1}}{2^{2M}(M+1) !}\norm{u}_{\VD{s_2}}\norm{v}_{\VD{s_1}}\max_{\xi\in[\smin,\smax]}\norms{C(M+1,\heps(\xi))}.
\end{multline*}
Using the fact that $e\left(\frac{M+1}{e}\right)^{M+1}\leq (M+1) !$ and $\log^{M+1}(\delta)\leq\delta(M+1)!$ (cf.\ Proposition~\ref{appx: prop1}), we can estimate the last term in the previous inequality as follows
\begin{multline*}
\max_{\xi\in[\smin,\smax]}\norms{C(M+1,\heps(\xi))}
\leq 2^{M+1}(M+1) !\max_{\xi\in[\smin,\smax]}\left(\frac{1}{e\heps(\xi)^{M+1}}+\delta^{\heps(\xi)+1}\right)\\
\leq 2^{M+1}(M+1) !\left(\frac{1}{e\heps(\smax)^{M+1}}+\delta^{\heps(\smin)+1}\right)
\leq 2^{M+1}(M+1) !\frac{e^{-1}+\delta^{\heps(\smin)+1}}{\heps(\smax)^{M+1}}.
\end{multline*}
A combination of the estimates concludes the proof.
\end{proof}

\begin{remark}
\label{rem:restriction_s}
For $\sigma < 1$, i.e., $\heps(\smax) > (\smax - \smin)/2$,
the error term in~\eqref{eq:s_error} converges to zero exponentially.
This can be guaranteed in two situations, where we assume that $v\in\HD{\smin}$, $u\in\HD{\smin + 1/2 - \varepsilon}$ for arbitrarily small $\varepsilon > 0$.
\begin{itemize}
\item
  For $\smin \leq 1/2$ and $\smax - \smin < 1/5$: In this case, we can choose
  \[
    s_1 = \smin, \quad s_2 = \smin + 1/2 - \varepsilon,
  \]
  which leads to $\sigma < 1$
   in the case of $\varepsilon < 1/2 - (5/2)(\smax - \smin)$. 
\item
  For $\smin > 1/2$ and $\smax - \smin < (2/3)(1-\smax)$: Here, a choice of $s_2$ as above is prevented by the restriction $s_2 < 1$. Thus, we chose
  \[
    s_1 = \smin, \quad s_2 = 1 - \varepsilon,
  \]
  which is possible using $\HD{\smin+1/2-\varepsilon} \hookrightarrow \HD{1-\varepsilon}$.
  Then, we obtain $\sigma < 1$
  in the case of $\varepsilon < 1 - \smax - (3/2)(\smax - \smin)$. 
\end{itemize}
\end{remark}
In the following, we will assume that one of the conditions of Remark~\ref{rem:restriction_s} holds, which limits the size of the interval $[\smin,\smax]$. However, in the case that the interval of interest is larger, we can simply subdivide it into several subintervals $[\smin^k,\smax^k]$ with $\smin^0 = \smin$, $\smax^{k-1} = \smin^k$ for $k = 1,2,\ldots,K$, and $\smax^K = \smax$, such that each subinterval fulfills the conditions of Remark~\ref{rem:restriction_s}. Furthermore,  for any $\smin, \smax \in (0,1)$ the interval $[\smin, 2/3]$ can be covered by at most four subintervals, and the remaining interval $[2/3, \smax]$ can be covered by a finite number of subintervals. However, for notational simplicity we will simply accept the restrictions from Remark~\ref{rem:restriction_s} and consider a single interval.

\subsection{Error due to the affine kernel approximation}
In the following, we analyze the error cause by the approximation~\eqref{bilform_sapprox}.
Due to the fact that the coefficents $\Theta^s_m$ can be negative, we can not guarantee that \(\adM\) is coercive on $\HD{s}\times\HD{s}$, $s\in(0,1)$. To overcome this difficulty, we introduce a regularized problem. 

In what follows, we assume that the conditions from Remark~\ref{rem:restriction_s} are fulfilled.
We also define the mean value for the choices of $s_1$ and $s_2$ given there as
\[
\s := (s_1+s_2)/2 = \begin{cases}
  \smin + 1/4 - \varepsilon/2 &\text{if } \smin \leq 1/2, \\
  (\smin + 1)/2 - \varepsilon/2 &\text{else}.
\end{cases}
\]
For a regularization parameter $\rho>0$, we define $\adr\colon\VD{\s}\times\VD{\s}\times[\smin,\smax]\to\R$ as
\begin{equation}
\label{bilinear_reg}
\adr(u,v;s):=\adM(u,v;s)+\rho(u,v)_{\VD{\s}}. 
\end{equation}
We consider the following regularized problem: Find $u\in\HD{\s}$, such that
\begin{equation}
\adr(u^\rho,v;s)=\dual{f(s),v} \qquad\forall v\in\HD{\s}.\label{detailed_reg}
\end{equation}
\begin{proposition}
Under the assumption $\rho > C(\delta)\sigma^{M+1}$,
the bilinear form~\eqref{bilinear_reg} is coercive and continuous on $\VD{\s}\times{\VD{\s}}$,
with coercivity constant $\alpha_a^{\rho}:=\rho-C(\delta)\sigma^{M+1}$ and continuity constant $\gamma_a^{\rho}:={\sconst{s,\s}^2}+\rho+C(\delta)\sigma^{M+1}$.
Additionally, it is coercive on $\VD{s}\times{\VD{s}}$ for any $s \in [\smin,\smax]$ with constant one.
Thus, there exists a unique solution $u^\rho \in \VD{\s} \hookrightarrow \VD{s}$ of~\eqref{detailed_reg} with
$
\norm{u^\rho}_{\VD{s}} \leq \norm{f(s)}_{\VD{s}'}.
$
\end{proposition}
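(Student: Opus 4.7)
The plan is to decompose $\adr(u,v;s)=a(u,v;s)+\bigl(\adM(u,v;s)-a(u,v;s)\bigr)+\rho\,(u,v)_{\VD{\s}}$, exploit the fact that $a(u,u;s)=\norm{u}_{\VD{s}}^2\geq 0$, and control the interpolation discrepancy by Lemma~\ref{lemma:s_int_error}. In order to get bounds on $\VD{\s}\times\VD{\s}$ for a symmetric argument $u=v$, I will apply Lemma~\ref{lemma:s_int_error} with the \emph{symmetric} choice $s_1=s_2=\s$. This is admissible under the standing assumption of Remark~\ref{rem:restriction_s} (only $\s-1/2<\smax<\s$ is required for the lemma), and it yields exactly the same constants $\sigma$ and $C(\delta)$ as the asymmetric choice used so far, since both depend on $(s_1,s_2)$ only through the sum $s_1+s_2=2\s$ appearing in $\heps$.

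For coercivity on $\VD{\s}$, the decomposition together with $a(u,u;s)\geq 0$ and the symmetric interpolation bound yields $\adr(u,u;s)\geq(\rho-C(\delta)\sigma^{M+1})\norm{u}_{\VD{\s}}^2$, which matches $\alpha_a^\rho$. For continuity, Cauchy--Schwarz in $\VD{s}$ combined with the embedding constant $\sconst{s,\s}$ from~\eqref{equiv_2} (valid since $\s>\smax\geq s$) gives $\norms{a(u,v;s)}\leq\sconst{s,\s}^2\norm{u}_{\VD{\s}}\norm{v}_{\VD{\s}}$; Lemma~\ref{lemma:s_int_error} handles the interpolation error term, and Cauchy--Schwarz in $\VD{\s}$ the regularization term. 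Summing the three contributions gives precisely $\gamma_a^\rho$. For the stronger coercivity on $\VD{s}\times\VD{s}$ with constant one, I use the identity $a(u,u;s)=\norm{u}_{\VD{s}}^2$ directly, so that for $u\in\VD{\s}$ (on which $\adr$ is well defined),
\[
\adr(u,u;s)\geq \norm{u}_{\VD{s}}^2+\bigl(\rho-C(\delta)\sigma^{M+1}\bigr)\norm{u}_{\VD{\s}}^2\geq\norm{u}_{\VD{s}}^2,
\]
thanks to the hypothesis $\rho>C(\delta)\sigma^{M+1}$. This is the key ingredient that will make the final a~priori estimate sharper than what $\VD{\s}$-coercivity alone provides.

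Existence and uniqueness of $u^\rho\in\VD{\s}$ then follow from the Lax--Milgram theorem applied to the continuous and coercive form $\adr$ on $\VD{\s}\times\VD{\s}$. The a~priori bound is obtained by testing~\eqref{detailed_reg} with $v=u^\rho\in\VD{\s}\hookrightarrow\VD{s}$ and invoking the constant-one $\VD{s}$-coercivity:
\[
\norm{u^\rho}_{\VD{s}}^2\leq\adr(u^\rho,u^\rho;s)=\dual{f(s),u^\rho}\leq\norm{f(s)}_{\VD{s}'}\norm{u^\rho}_{\VD{s}},
\]
and division yields the stated estimate. The only genuine subtlety I expect is justifying the symmetric use of Lemma~\ref{lemma:s_int_error}, i.e., certifying that the constants $\sigma$ and $C(\delta)$ used in the proposition statement are indeed the ones produced by the lemma when both slots carry the same regularity $\s$; once this observation is in place, the rest of the argument is a standard decomposition-and-Lax--Milgram routine.
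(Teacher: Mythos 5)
Your proposal is correct and follows essentially the same route as the paper: the paper likewise applies Lemma~\ref{lemma:s_int_error} with the symmetric choice $s_1=s_2=\s$ to get coercivity $\adr(u,u;s)\geq\norm{u}^2_{\VD{s}}+(\rho-C(\delta)\sigma^{M+1})\norm{u}^2_{\VD{\s}}$, obtains continuity from the embedding~\eqref{equiv_2} and Cauchy--Schwarz, and concludes with Lax--Milgram and the constant-one $\VD{s}$-coercivity for the a~priori bound. Your explicit check that $\sigma$ and $C(\delta)$ depend on $(s_1,s_2)$ only through $s_1+s_2$, so the symmetric application of the lemma reproduces the stated constants, is a detail the paper leaves implicit.
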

\begin{proof}
Using the error estimate~\eqref{eq:s_error} with the choices $s_1 = s_2 = \s$, we can write
\begin{align*}
\adr(u,u;s)
&\geq \a(u,u;s) - \norms{\a(u,u;s) - \adr(u,u;s)} + \rho\norm{u}^2_{\VD{\s}}\\
&\geq\norm{u}^2_{\VD{s}}+(\rho-C(\delta)\sigma^{M+1})\norm{u}^2_{\VD{\s}}.
\end{align*}
By neglecting either the first or the second term, we obtain the coercivity of $\adr$ as stated.
Second, using the Sobolev embedding~\eqref{equiv_2}, we obtain
\begin{align*}
\norms{\adr(u,v;s)}&\leq
\norm{u}_{\VD{s}}\norm{v}_{\VD{s}}+C(\delta)\sigma^{M+1}\norm{u}_{\VD{\s}}\norm{v}_{\VD{\s}}
+\rho\norm{u}_{\VD{\s}}\norm{v}_{\VD{\s}}\\
&\leq\left({\sconst{s,\s}^2}+\rho+C(\delta)\sigma^{M+1}\right)\norm{u}_{\VD{\s}}\norm{v}_{\VD{\s}}.
\end{align*}
The existence and uniqueness of the solution follows now with the Lax-Milgram lemma. The given estimate is due to the coercivity of $\adr$ on $\VD{s}$.
\end{proof}

Additionally, we can quantify the error between the solutions of~\eqref{detailed_reg} and~\eqref{var_form_s}. 
\begin{proposition}
\label{prop:error_s_approx}
Let $u\in\VD{s}$, $u^{\rho}\in\VD{\s}$ are the solutions of~\eqref{var_form_s} and~\eqref{detailed_reg}, respectively. Then, we have the following error bound
\begin{equation}
\norm{u^{\rho}-u}_{\VD{s}}\leq {\sconstv{\smin,s}} \left(C(\delta)\sigma^{M+1}+\rho\right)\norm{u}_{\VD{s_2}},
\label{error_reg_detailed}
\end{equation}
where $s_2 = \min\{\;\smin+1/2,\,1\;\} -\varepsilon$.
\end{proposition}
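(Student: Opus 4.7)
The plan is to combine the coercivity of $\adr$ on $\VD{s}$ with constant one (established in the preceding proposition) with a Galerkin-orthogonality-type identity between the two problems, together with the interpolation estimate from Lemma~\ref{lemma:s_int_error}. A preliminary observation is that $u\in\VD{s_2}$ by Theorem~\ref{thm:regularity_linear}, and the choices prescribed in Remark~\ref{rem:restriction_s} guarantee $\s \leq s_2$, so $e := u^\rho - u \in \VD{\s}$ is an admissible test function in both formulations.

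Subtracting~\eqref{var_form_s} from~\eqref{detailed_reg}, for any $v \in \VD{\s} \hookrightarrow \VD{s}$ the right-hand sides cancel and
\[
\adr(e, v; s) = \bigl(a(u, v; s) - \adM(u, v; s)\bigr) - \rho\, (u, v)_{\VD{\s}}.
\]
Setting $v = e$ and using $\norm{e}^2_{\VD{s}} \leq \adr(e, e; s)$ yields
\[
\norm{e}^2_{\VD{s}} \leq \bigl|a(u, e; s) - \adM(u, e; s)\bigr| + \rho\, |(u, e)_{\VD{\s}}|.
\]
The first term is controlled directly by Lemma~\ref{lemma:s_int_error} with $s_1=\smin$ and $s_2$ as in the statement, producing $\sigma^{M+1} C(\delta)\, \norm{u}_{\VD{s_2}}\,\norm{e}_{\VD{\smin}}$.

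The main obstacle is the regularization term: a naive Cauchy--Schwarz in $\VD{\s}$ produces $\norm{e}_{\VD{\s}}$, which cannot in general be controlled by $\norm{e}_{\VD{s}}$ since $\s > s$ is possible. The resolution is to rewrite $(u,e)_{\VD{\s}} = a(u,e;\s)$ as an integral over $S_\delta$ with weight $|x-x'|^{-(n+2\s)}$ and to exploit that $\s = (s_1+s_2)/2$, so that the weight factors as $|x-x'|^{-(n/2+s_2)}\cdot|x-x'|^{-(n/2+s_1)}$. A weighted Cauchy--Schwarz then gives $|(u,e)_{\VD{\s}}| \leq \norm{u}_{\VD{s_2}}\,\norm{e}_{\VD{\smin}}$, with no extra constant. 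Combining with the bound on the consistency error, applying $\norm{e}_{\VD{\smin}} \leq \sconstv{\smin,s}\,\norm{e}_{\VD{s}}$ from~\eqref{equiv_2}, and dividing by $\norm{e}_{\VD{s}}$ delivers the claimed estimate. Without this weight-splitting trick one would pick up a $1/(\rho - C(\delta)\sigma^{M+1})$ factor, and the bound would fail to be linear in $\rho$ as stated.
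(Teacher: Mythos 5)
Your proof is correct and follows essentially the same route as the paper's: subtract the two variational problems, test with the error, invoke the unit-constant coercivity of $\adr$ on $\VD{s}\times\VD{s}$, bound the consistency term via Lemma~\ref{lemma:s_int_error} with $s_1=\smin$, and control the regularization term by the very same splitting of the weight $\dist^{-(n+2\s)}$ into $\dist^{-(n/2+s_1)}\dist^{-(n/2+s_2)}$ followed by Cauchy--Schwarz, which is exactly how the paper obtains $\norms{(u,e)_{\VD{\s}}}\leq\norm{u}_{\VD{s_2}}\norm{e}_{\VD{\smin}}$. Your closing remark about avoiding a $1/(\rho-C(\delta)\sigma^{M+1})$ factor is a useful observation but not needed for the argument itself.
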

\begin{proof}
From Theorem~\ref{thm:regularity_linear}, we have that $u\in\HD{\smin+1/2-\varepsilon}\subset\HD{\s}$.  Then for $v\in\HD{\s}\subset\HD{\smin}$ and using the Cauchy-Schwarz inequality, we can estimate 
\[
(u,v)_{\VD{\s}} = \int_{S_\delta}  \frac{(u(\x) - u(\xx))(v(\x) - v(\xx))}{\dist^{n/2 + s_1}\dist^{n/2 + s_2}} \d(\x,\xx)
\leq\norm{u}_{\VD{s_2}}\norm{v}_{\VD{s_1}},
\]
with $s_1 = \smin$ and $s_2 = \min\{\;\smin+1/2-\varepsilon,\,1-\varepsilon\;\}$ (cf.\ Remark~\ref{rem:restriction_s}).
Moreover, for $v\in\HD{\s}$ we obtain that
\begin{equation*}
\adr(u^{\rho}-u,v)=a(u,v)-\adM(u,v)-\rho(u,v)_{\VD{\s}}.
\end{equation*}
Then, taking $v=u^{\rho}-u\in\HD{\s}$, using coercivity of $\adr$ on $\VD{s}\times\VD{s}$, the estimate above, and Lemma~\ref{lemma:s_int_error} with the same $s_1$, $s_2$ as above, we obtain
\begin{multline*}
\norm{u^{\rho}-u}_{\VD{s}}^2\leq C(\delta)\sigma^{m+1}\norm{u}_{\VD{s_2}}\norm{u^{\rho}-u}_{\VD{\smin}}+\rho\norm{u}_{\VD{s_2}}\norm{u^{\rho}-u}_{\VD{\smin}}\\
\leq{\sconstv{\smin,s}}\left(C(\delta)\sigma^{m+1}+\rho\right)\norm{u}_{\VD{s_2}}\norm{u^{\delta}-u}_{\VD{s}},
\end{multline*}
which yields the desired result.
\end{proof}
\begin{corollary}
For a choice of $\rho = 2 C(\delta)$, the problem~\eqref{detailed_reg} is well posed, and admits the estimate
\[
\norm{u^{\rho}-u}_{\VD{s}}\leq C \sigma^{M+1} \norm{u}_{\VD{s_2}},
\]
with $C$ depending only on $s$, $\smin$, $\smax$, and $\delta$, with $\sigma$ as given in Lemma~\ref{lemma:s_int_error}.
\end{corollary}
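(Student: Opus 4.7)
The plan is to derive this corollary as a direct specialization of Proposition~\ref{prop:error_s_approx} once the free parameter $\rho$ is fixed. The first step is to verify that the chosen $\rho$ satisfies the well-posedness hypothesis of the preceding proposition, namely $\rho > C(\delta)\sigma^{M+1}$. Under the assumptions of Remark~\ref{rem:restriction_s} we have $\sigma < 1$ and hence $\sigma^{M+1} < 1$, so $\rho = 2C(\delta)$ (or more naturally $\rho = 2C(\delta)\sigma^{M+1}$, see below) comfortably dominates $C(\delta)\sigma^{M+1}$. With this bound in hand, the preceding proposition guarantees that $\adr(\cdot,\cdot;s)$ is coercive and continuous on $\VD{\s}\times\VD{\s}$, so the Lax--Milgram lemma yields a unique solution $u^\rho \in \VD{\s}$ of~\eqref{detailed_reg} together with the uniform bound in terms of $\norm{f(s)}_{\VD{s}'}$.

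For the quantitative error bound, the plan is to substitute the chosen $\rho$ into the estimate~\eqref{error_reg_detailed}, which reads
\[
\norm{u^{\rho}-u}_{\VD{s}}\leq {\sconstv{\smin,s}} \left(C(\delta)\sigma^{M+1}+\rho\right)\norm{u}_{\VD{s_2}}.
\]
In order for the factor $\sigma^{M+1}$ to persist as the dominant rate in $M$, one must balance $\rho$ against the first term, i.e.\ take $\rho$ proportional to $C(\delta)\sigma^{M+1}$; the symmetric choice $\rho = 2C(\delta)\sigma^{M+1}$ reduces the bracket to $3C(\delta)\sigma^{M+1}$, while the written value $\rho = 2C(\delta)$ would leave a non-decaying remainder and should be interpreted in the same balanced sense. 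Absorbing $3{\sconstv{\smin,s}}C(\delta)$ into a single constant $C$ depending only on $s$, $\smin$, $\smax$, and $\delta$ then gives the claimed bound.

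There is no genuine obstacle here: the corollary is pure bookkeeping, combining the Lax--Milgram conclusion of the previous proposition with the quantitative estimate of Proposition~\ref{prop:error_s_approx} for a calibrated choice of $\rho$. The only point that deserves care is the scaling of $\rho$ with $\sigma^{M+1}$, without which the exponential convergence rate inherited from Lemma~\ref{lemma:s_int_error} is lost.
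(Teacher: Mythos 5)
Your proof is correct and is exactly the argument the paper intends: the corollary is a direct specialization of Proposition~\ref{prop:error_s_approx} (together with the well-posedness statement preceding it) once $\rho$ is fixed, and the paper supplies no further proof. You are also right to flag that the literal choice $\rho = 2C(\delta)$ would leave a non-decaying term $2\sconstv{\smin,s}C(\delta)\norm{u}_{\VD{s_2}}$ in the bound~\eqref{error_reg_detailed}; the intended calibration is $\rho = 2C(\delta)\sigma^{M+1}$ (this is precisely the value used in the paper's numerical experiments), which satisfies the coercivity requirement $\rho > C(\delta)\sigma^{M+1}$ and turns the bracket into $3C(\delta)\sigma^{M+1}$, giving the stated estimate with $C = 3\sconstv{\smin,s}C(\delta)$.
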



\section{Reduced basis approximation}\label{sec:rbm}

Solving the problems~\eqref{nonl_linear_var} for various values of the parameters, e.g., by finite elements or other discretization methods, would require a substantial computational effort due to the fact that the underlying discrete system will consist of banded matrices with  bandwidth related to the nonlocal interaction radius $\delta$. For $\delta=\infty$, as in the example of the fractional Laplacian, we need to deal with full matrices. As a remedy, one could use the affine problems~\eqref{var_trunc_delta} and~\eqref{detailed_reg} instead, to reduce the computational cost of assembling the matrices for different parameters. However, in this case one still has to confront the problem of solving dense high-dimensional systems.

In this section, we describe how to build the reduced basis approximation upon problems~\eqref{var_trunc_delta} and~\eqref{detailed_reg} -- which are often referred as the ``detailed problems'' -- and to obtain a reduced problem of much smaller dimension. 
We note that~\eqref{var_trunc_delta} and~\eqref{detailed_reg} can also be replaced by high-fidelity discrete problems, where $\V$, $\VD{s}$ are substituted with high-dimensional discrete approximation spaces, e.g., finite element spaces. 
Subsequently, the reduced model is constructed based on an approximation of the detailed spaces
by low-dimensional reduced spaces $\V_N\subset\V$, $\VN{s}\subset\VD{s}$. 
Various approaches exist for the construction of the reduced basis approximation
spaces.
Typically, this is done by applying an iterative greedy strategy to a set of snapshots, i.e., solutions computed for different parameter values; see, e.g.,~\cite{buffa,binev}. 
The computational speed-up is then achieved by an offline-online computational procedure, invoking the affine-parameter dependency of the problem: In the offline routine, which is performed only once, we assemble all parameter independent forms needed in the construction of the affine approximation, i.e., we assemble $a(\cdot,\cdot;\delta_k)$ in~\eqref{bil_form_mu} and $a(\cdot,\cdot;s_m)$ in~\eqref{bilform_sapprox}, for $k=0,\dots,K$, $m=0,\dots,M$. During the online stage, we evaluate the parameter-dependent components, i.e., $\Theta_k^{\delta}(\delta)$ and $\Theta_m^s(s)$, and solve the corresponding reduced system. This stage is executed multiple times for each new parameter value. 

\subsection{RBM with respect to \texorpdfstring{$\delta$}{Lg}}
The RBM approximation for $\delta$ is rather straightforward and we outline it only briefly. 
Using a greedy algorithm, we construct the reduced bases spaces $\V_N\subset\V$ from the set of snapshots $\{u(\delta_i), i=1,\dots,N\}$, where $u(\delta_i)$ are the solutions of~\eqref{nonl_linear_var} for different $\delta_i\in\P^{\delta}$, $\P^{\delta}:=[\deltamin,\deltamax]$. To find the reduced solution we solve the following problem: For $\delta\in\P^{\delta}$, find $u_N(\delta)$, such that
\begin{equation}
\ad(u_N,v;\delta)=\dual{f,v}\qquad \forall v\in\V_N,\label{eq:rb_problem_delta}
\end{equation}
where, we recall, $\ad(\cdot,\cdot;\delta)$ is defined in~\eqref{bil_form_mu} either using {\it Case 1} or {\it Case 2} approximations. To validate the error caused by the RB approximation, we derive the corresponding a~posteriori error estimates. We define the residual $r(\cdot;\delta)\in\Vd$ by
\[
r(v;\delta):=\ad(\tr{u}_N,v;\delta)-\dual{f,v},\quad v \in \V.
\]
We note that the residual vanishes on $\V_N$, i.e., $r(v;\delta)=0$ for any $v\in V_N$. Then, we obtain the following error bounds.

\begin{proposition}[A posteriori error estimator for $\delta$]
\label{aposteriori_delta}
For any $\delta\in\P^{\delta}$ the reduced basis error for {\it Case~1}, under conditions of Proposition~\ref{lemma:delta_conv}, and for {\it Case~2}, under the conditions of Proposition~\ref{lemma:delta_conv2}, we obtain
\begin{equation}
\norm{\tr{u}_N-u}_\V\leq
\frac{\norm{{r}}_{\Vd}}{\alpha_a}+\frac{C_P}{\alpha_a}\begin{cases}
{C_a}\Delta\delta\,\norm{\tr{u}_N}_{L^2(\D)} &\text{ for {\it Case 1}},\\
{L_a^{\prime}}(\Delta\delta)^2\,\norm{\tr{u}_N}_{H_0^1(\D)} &\text{ for {\it Case 2}},
\end{cases}\label{eq:apos_delta}
\end{equation}
where $\alpha_a$, $C_a$, $C_P$, and $L_{a^\prime}$ are from~\eqref{constants_a},~\eqref{error_bil_form},~\eqref{poincare}, and~\eqref{eq:lipschitz_der_a}, respectively.
\end{proposition}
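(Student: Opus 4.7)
The plan is to use the coercivity of the original bilinear form $a(\cdot,\cdot;\delta)$ on $\V$ and split the error $e := \tr{u}_N - u$ into a residual contribution and a kernel approximation contribution, where the latter is arranged so that it involves $\tr u_N$ (a computable quantity) rather than the unknown $u$.

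First, I would observe that for every $v \in \V$,
\[
a(e, v;\delta) \;=\; a(\tr u_N, v;\delta) - \dual{f,v}
\;=\; \bigl[a(\tr u_N, v;\delta) - \ad(\tr u_N, v;\delta)\bigr] \;+\; r(v;\delta),
\]
where the second equality uses $\dual{f,v} = \ad(\tr u_N, v;\delta) - r(v;\delta)$ by definition of the residual. This is the central rewriting: the kernel approximation error is evaluated on $\tr u_N$ instead of on $u$.

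Next, testing with $v = e \in \V$ and applying the coercivity constant $\alpha_a$ from~\eqref{constants_a} for $a(\cdot,\cdot;\delta)$, we get
\[
\alpha_a \norm{e}_{\V}^2 \;\leq\; a(e,e;\delta) \;\leq\; \norms{r(e;\delta)} + \norms{a(\tr u_N, e;\delta) - \ad(\tr u_N, e;\delta)}.
\]
The first term is bounded in the standard way by $\norm{r}_{\V^\prime}\, \norm{e}_\V$. For the second term I would apply Proposition~\ref{lemma:delta_conv} in \emph{Case~1}, giving
\[
\norms{a(\tr u_N, e;\delta) - \ad(\tr u_N, e;\delta)} \;\leq\; C_a \, \Delta\delta \, \norm{\tr u_N}_{L^2(\D)} \norm{e}_{L^2(\D)},
\]
and Proposition~\ref{lemma:delta_conv2} in \emph{Case~2}, giving
\[
\norms{a(\tr u_N, e;\delta) - \ad(\tr u_N, e;\delta)} \;\leq\; L_{a^\prime}\, (\Delta\delta)^2 \, \norm{\tr u_N}_{H^1_0(\D)} \norm{e}_{L^2(\D)}.
\]

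Finally, I would invoke the nonlocal Poincar\'e inequality~\eqref{poincare} to absorb $\norm{e}_{L^2(\D)} \leq C_P \norm{e}_\V$, divide both sides by $\norm{e}_\V$, and obtain the claimed bound~\eqref{eq:apos_delta}. No step here is really an obstacle; the only subtlety worth emphasizing is the algebraic shift from $a(u,e;\delta)-\ad(u,e;\delta)$ (which would force $u$ on the right-hand side and thus not be computable) to $a(\tr u_N,e;\delta)-\ad(\tr u_N,e;\delta)$, achieved by writing the Galerkin residual of $e$ with respect to $a$ rather than $\ad$.
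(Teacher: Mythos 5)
Your proposal is correct and follows exactly the paper's own argument: the same identity $a(\tr{u}_N-u,v;\delta)=r(v;\delta)+a(\tr{u}_N,v;\delta)-\ad(\tr{u}_N,v;\delta)$, the same test function $v=\tr{u}_N-u$ with coercivity of $a$, and the same use of Propositions~\ref{lemma:delta_conv} and~\ref{lemma:delta_conv2} followed by the Poincar\'e inequality. The remark about evaluating the kernel-approximation error on $\tr{u}_N$ rather than $u$ to keep the bound computable is precisely the point of the paper's decomposition.
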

\begin{proof}
For any $v\in\V$ we have
\begin{align*}
\a(\tr{u}_N-u,v)
= r(v) + \a(\tr{u}_N,v) - \ad(\tr{u}_N,v).
\end{align*}
Taking $v=\tr{u}_N-u$, using coercivity of $\a$, we obtain
\begin{align*}
\alpha_a\norm{\tr{u}_N-u}^2_{\V}
\leq \norm{{r}}_{\Vd}\norm{\tr{u}_N-u}_{\V} + \norms{\a(\tr{u}_N,\tr{u}_N-u)-\ad(\tr{u}_N,\tr{u}_N-u)}
\end{align*}
and applying the error bound~\eqref{error_bil_form}, resp.~\eqref{error_bil_form2}, we conclude the proof.
\end{proof}

\subsection{RBM with respect to \texorpdfstring{$s$}{Lg}}

Upon the detailed problem~\eqref{detailed_reg}, we build the corresponding reduced basis approximations. For suitably chosen approximation spaces $\VN{s}$, we have the following problem:
For a given $s\in\P^s:=[\smin, \smax ]$, find $u^{\rho}_N(\parmu)\in\VN{s}$, such that
\begin{equation}
\adr(u^{\rho}_N,v;s)=\dual{f(s),v},\quad\forall v\in\VN{s}.\label{rb_reg}
\end{equation}
In particular, we chose the reduced space as follows:
\begin{equation}
\VN{s}:=\left\{\spann\{u(s_i),\; s_i\in\Ps,\; i=1,\dots,N\},\;\;\norm{\cdot}_{\VD{s}}\right\}.
\end{equation}
We note that, invoking Theorem~\ref{thm:regularity_linear}, we have that
\[
\spann\{u(s_i),\; s_i\in\Ps,\; i=1,\dots,N\}\subset\HD{\smin+1/2-\varepsilon}\subset\HD{\s}\subset\HD{s},
\]
and, hence, $\VN{s}\subset\VD{s}$. This also guarantees the well-posedness of the reduced problem~\eqref{rb_reg}.
For any $s\in\P^s$, we define the residual 
\begin{equation}
r^{\rho}(v;s):=\adr(u_N^{\rho},v;s)-\dual{f(s),v},\quad\forall v\in\VD{\s}.
\end{equation}
We note that $r^{\rho}(v;s)$ vanishes for any $v\in\VN{\s}$. Then, we derive the a~posteriori error bound associated with the reduced basis approximation.
\begin{proposition}[A posteriori error estimator for $s$]
Let for $s\in\P^s$, $u\in\VD{s}$, $u_N^{\rho}(s)\in\VN{s}$ be a solution of~\eqref{var_form_s} and~\eqref{rb_reg}, respectively. Then, with 
$s_2 = \min\{\smin+1/2,\, 1\;\} - \varepsilon$, the reduced basis error can be estimated as
\begin{equation}
\norm{u^{\rho}_N-u}_{\VD{s}}\leq{\sconstv{\smin,s}}\left(\norm{r^{\rho}}_{\VD{-\smin}}+\left(\rho+C(\delta)\sigma^{M+1}\right)\norm{u_N^{\rho}}_{\VD{s_2}}\right).\label{eq:apos_s}
\end{equation}
\end{proposition}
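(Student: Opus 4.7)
My plan is to adapt the proof of Proposition~\ref{prop:error_s_approx} by replacing the detailed regularized solution $u^\rho$ with its reduced counterpart $u_N^\rho$, in the same residual-plus-affine-approximation spirit as Proposition~\ref{aposteriori_delta}. Writing $\adr(u_N^\rho, v; s) = \dual{f(s),v} + r^\rho(v;s)$ and subtracting~\eqref{var_form_s}, I obtain for every $v\in\VD{\s}$ the error identity
\begin{equation*}
a(u_N^\rho - u, v; s) = \bigl[a(u_N^\rho, v; s) - \adM(u_N^\rho, v; s)\bigr] - \rho(u_N^\rho, v)_{\VD{\s}} + r^\rho(v; s).
\end{equation*}

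Next, by Theorem~\ref{thm:regularity_linear} one has $u \in \HD{s+1/2-\varepsilon}$, and since $u_N^\rho$ is a finite linear combination of snapshots drawn from $\HD{\smin+1/2-\varepsilon}$, the error $u_N^\rho - u$ lies in $\VD{\s}\hookrightarrow\VD{s}$. Coercivity of $a(\cdot,\cdot;s)$ on $\VD{s}$ with constant one gives $\norm{u_N^\rho-u}^2_{\VD{s}}\leq a(u_N^\rho-u,u_N^\rho-u;s)$. Substituting $v = u_N^\rho - u$ into the error identity, I would bound the three terms on the right as follows: the interpolation discrepancy by Lemma~\ref{lemma:s_int_error} with $s_1=\smin$ and $s_2=\min\{\smin+1/2,\,1\}-\varepsilon$, producing $C(\delta)\sigma^{M+1}\norm{u_N^\rho}_{\VD{s_2}}\norm{v}_{\VD{\smin}}$; the regularization term $\rho(u_N^\rho,v)_{\VD{\s}}$ by the same asymmetric Cauchy--Schwarz split used in the proof of Proposition~\ref{prop:error_s_approx}, yielding $\rho\,\norm{u_N^\rho}_{\VD{s_2}}\norm{v}_{\VD{\smin}}$; and the residual by $\norm{r^\rho}_{\VD{-\smin}}\norm{v}_{\VD{\smin}}$. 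All three bounds share the factor $\norm{v}_{\VD{\smin}}\leq \sconstv{\smin,s}\norm{v}_{\VD{s}}$ via the embedding~\eqref{equiv_2}, so dividing by $\norm{u_N^\rho-u}_{\VD{s}}$ and collecting the three contributions gives exactly~\eqref{eq:apos_s}.

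The main subtlety is the interpretation of $\norm{r^\rho}_{\VD{-\smin}}$, because $r^\rho$ is defined as a functional on $\VD{\s}$ rather than on the larger space $\VD{\smin}$. This extension is legitimate because the enhanced spatial regularity of $u_N^\rho$ inherited from the snapshots permits every term in $\adr(u_N^\rho,\cdot;s)$ to be estimated against $\norm{\cdot}_{\VD{\smin}}$ through the same asymmetric Cauchy--Schwarz trick exploited in Lemma~\ref{lemma:s_int_error} and Proposition~\ref{prop:error_s_approx}, so that $r^\rho$ extends by density to a bounded linear functional on $\VD{\smin}$. Once this extension is in place the remaining bookkeeping is routine, and the residual term is the only genuinely new ingredient compared to Proposition~\ref{prop:error_s_approx}.
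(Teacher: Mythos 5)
Your argument is correct and follows essentially the same route as the paper's proof: the identical error identity $a(u_N^\rho-u,v;s)=r^\rho(v;s)+a(u_N^\rho,v;s)-\adM(u_N^\rho,v;s)-\rho(u_N^\rho,v)_{\VD{\s}}$, the test function $v=u_N^\rho-u$, the same asymmetric Cauchy--Schwarz bounds from Lemma~\ref{lemma:s_int_error} and Proposition~\ref{prop:error_s_approx}, and the embedding~\eqref{equiv_2} to pass from $\VD{\smin}$ to $\VD{s}$. Your closing remark on extending $r^\rho$ to a bounded functional on $\VD{\smin}$ is a point the paper leaves implicit, and it is handled correctly.
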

\begin{proof}
For $v\in\VD{\s}$, we obtain
\begin{align*}
a(u_N^{\rho}-u,v)&=\adr(u_N^{\rho},v)-\dual{f,v}+a(u_N^{\rho},v)-\adr(u_N^{\rho},v)\\
&=r^{\rho}(v;s)+a(u_N^{\rho},v)-\ad(u_N^{\rho},v)-\rho(u_N^{\rho},v)_{\VD{\s}}.
\end{align*}
Taking $v=u_N^{\rho}-u\in\VD{\s}\subset\VD{\smin}$, we obtain by similar arguments as in Proposition~\ref{prop:error_s_approx}
\begin{multline*}
\norm{u_N^{\rho}-u}^2_{\VD{s}}\leq\norm{r^{\rho}}_{\VD{-\smin}}\norm{u_N^{\rho}-u}_{\VD{\smin}}\\
+C(\delta)\sigma^{M+1}\norm{u_N^{\rho}}_{\VD{s_2}}\norm{u_N^{\rho}-u}_{\VD{\smin}}
+\rho\norm{u_N^{\rho}}_{\VD{s_2}}\norm{u_N^{\rho}-u}_{\VD{\smin}}\\
\leq
{\sconstv{\smin,s}}\left(\norm{r^{\rho}}_{\VD{-\smin}}+\left(\rho+C(\delta)\sigma^{M+1}\right)\norm{u_N^{\rho}}_{\VD{s_2}}\right)\norm{u_N^{\rho}-u}_{\VD{s}}.
\end{multline*}
Dividing by the error yields the result.
\end{proof}

We point out that the derived a-posteriori error bounds~\eqref{eq:apos_delta} and~\eqref{eq:apos_s} can be computed efficiently in an offline-online manner. 
This means that parameter-independent components for the computation of the error bound can be precomputed in the offline stage, and the online evaluation of the error bound depends only on the reduced dimension $N$ and $K$, respectively $M$.


\section{Numerical results}\label{sec:numerics}

We consider $\D = (0, 1)$, which is discretized with the uniform mesh of mesh size $h$, which we set to $h = 2^{-9}$. The interaction domain is given by $\DI=(-\delta,0)\cup(1,1+\delta)$, with $\delta\in[\deltamin,\deltamax]=[0.0625,1]$.
We consider the case of the fractional Laplace type kernels, that is kernels parametrized by $\delta$ and $s$ and defined in~\eqref{kernel:FL_delta}. The parameter domain for $\delta$ and $s$ is set to 
\[\P:=\P^\delta\times\P^{s}=[\deltamin,\deltamax]\times[\smin,\smax]=[0.0625,1]\times[1/3,1/2].\]
For the construction the RB method for the parameter $\delta$ we consider a training set $\P^{\delta}_{\rm train}$,  consisting of 121 uniformly distributed points with a step size $\Delta\delta=2^{-7}$. Analogously, for the parameter $s$, we use a training set $\P^{s}_{\rm train}$, comprising of 50 points. To validate the RB approximation we use a testing sets $\P^{\delta}_{\rm test}$ and $\P^{s}_{\rm test}$ consisting of 100 and 30 randomly distributed points on $\P^{\delta}$ and $\P^{s}$, respectively. 

In Figure~\ref{fig:0} we plot the variability of the discrete solution of~\eqref{nonl_linear_var} w.r.t.\ either $\delta$ or $s$ and $f(s) = (2/c_{n,s})F$ with $F$ chosen as a characteristic function on $[1/2,1]$, i.e., $F=\chi_{[1/2,1]}$. We observe that even having only one parameter varying in the model, there is a non-trivial parameter dependency of the solution. Moreover, we observe how the regularity of the solution deteriorates for decreasing $s$, as indicated by the theory.
\begin{figure}[htbp]
\centering
\includegraphics[width=0.42\textwidth]{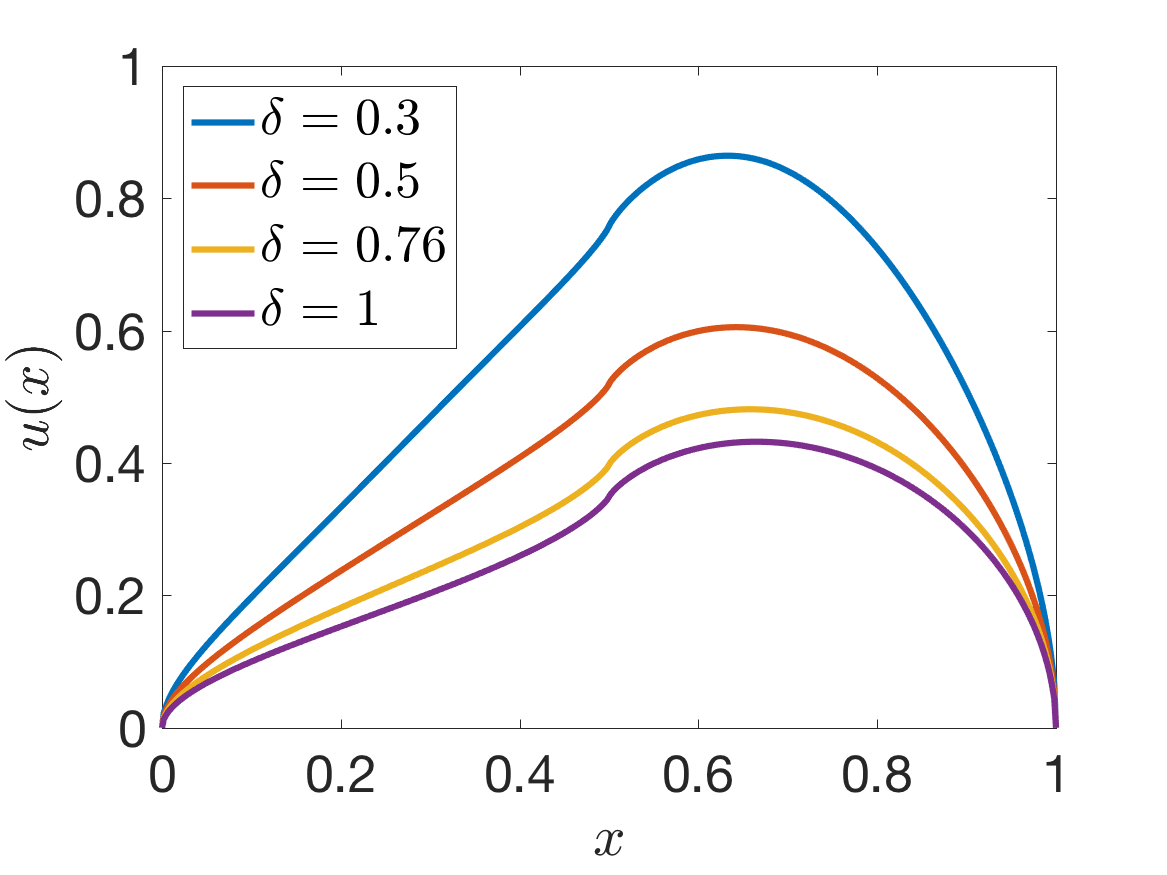}
\includegraphics[width=0.42\textwidth]{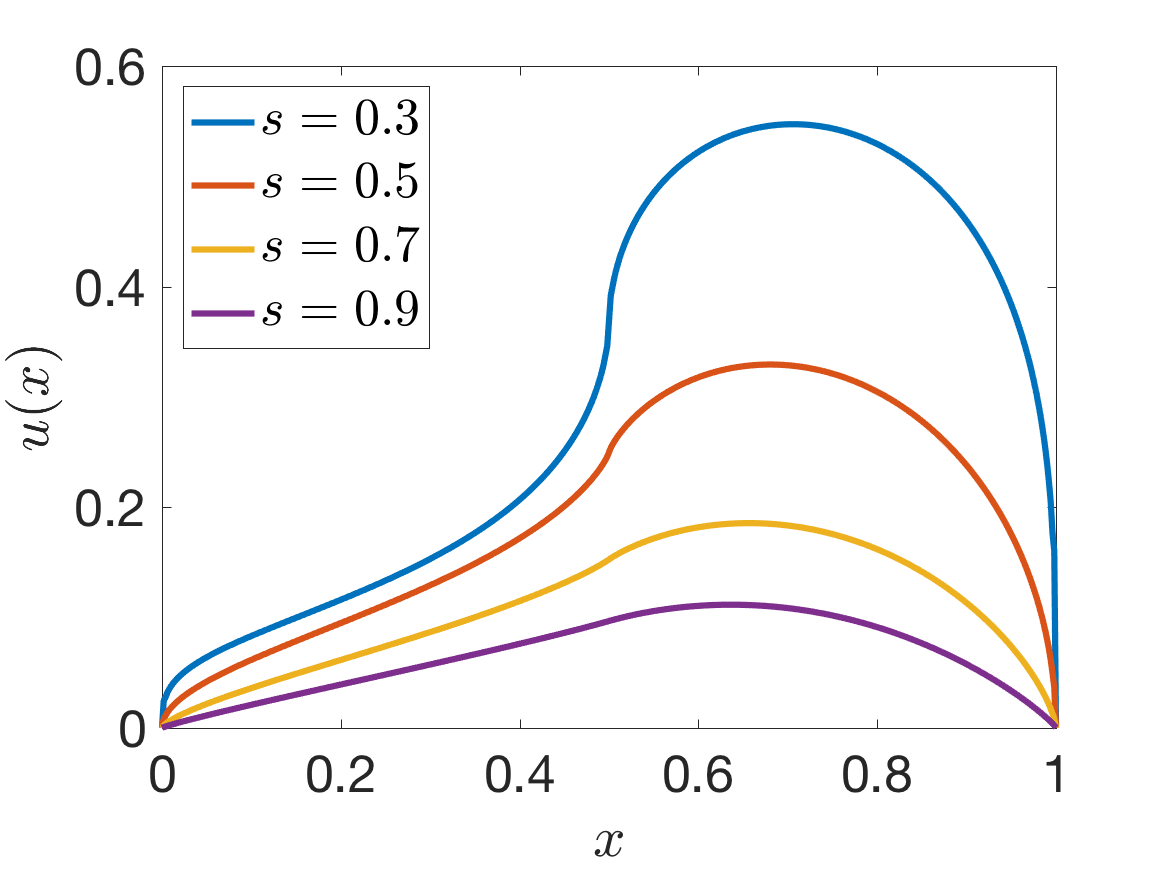}
\caption{Snapshots of the solution for different values of $\delta$ with $s=0.5$ (left) and different $s$ with $\delta=+\infty$ (right).}
\label{fig:0}
\end{figure}

\subsection{Affine approximation}

First, we numerically investigate the errors solely due to the the affine approximation of the bilinear form, without any reduced basis approximation. 
In the following, unless otherwise stated, the pivot energy space $\V$ is chosen for $\delta^{\star}=1/2$ and the data term is chosen as $f(s) = (2/c_{n,s})F$, where $F \equiv -1$. We note that we do not consider the error between continuous and the finite element solution, and only compare the errors in terms of the discrete solution. Hence, for notational convenience we denote the discrete solution using the same notation as for the continuous solution.

In Figure~\ref{fig:1} we depict the pointwise errors 
$\norm{u(\delta)-\widetilde{u}(\delta)}_{\V}$ for $\delta\in\P_{\rm train}^{\delta}$
between discrete solutions of the original problem~\eqref{nonl_linear_var} and the problem~\eqref{var_trunc_delta} with an affine kernel $\kerneld$ for values of $K\in\{\;5,9,16,31,61\;\}$ in~\eqref{kernel_approx_gen} using a uniform partitioning of $[\deltamin,\deltamax]$. 
We clearly observe the reduction of the error while increasing $K$ and smaller errors for \emph{Case~2} in comparison to \emph{Case~1}. 
In addition, we notice that the maximal error is not uniformly distributed over the intervals and significantly increases for smaller values of $\delta$. This suggests that using a uniform partitioning of $[\deltamin,\deltamax]$ for the construction of~\eqref{kernel_approx_gen} may not be optimal. {In fact, from the estimate~\eqref{eq:error_sol_affine_delta}, taking into account the particular form of the fractional Laplace kernel~\eqref{kernel:FL_delta}, we can deduce that distributing 
\begin{equation}
\label{delta_graded}
\delta_k \sim \deltamin\left(\frac{\deltamax}{\deltamin}\right)^{{k}/{K}},\quad k=0,1,\ldots,K,
\end{equation}
better equalizes the error in terms of the constant appearing in the provided estimate. }
Using such partitioning for the construction of $\kerneld$ for \emph{Case~2}, we measure again the corresponding errors in the discrete solutions, which is depicted in Figure~\ref{fig:1}. Here, we observe an additional reduction in the maximal error over \emph{Case~2} with the uniform refinement, and, in addition, a better equilibration of the maximal error over the subintervals $[\delta_{k-1},\delta_k]$. 
\begin{figure}[htbp]
 \centering
\includegraphics[width=0.32\textwidth]{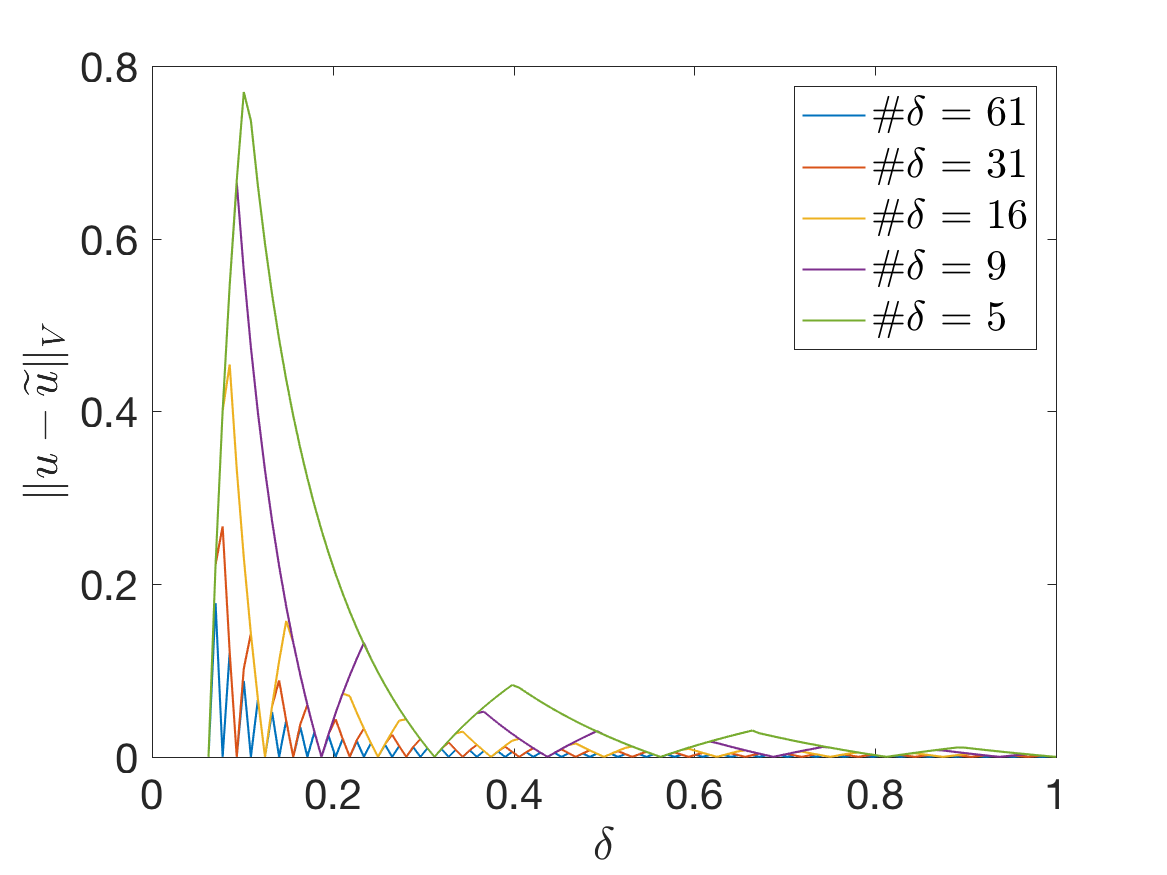}
\includegraphics[width=0.32\textwidth]{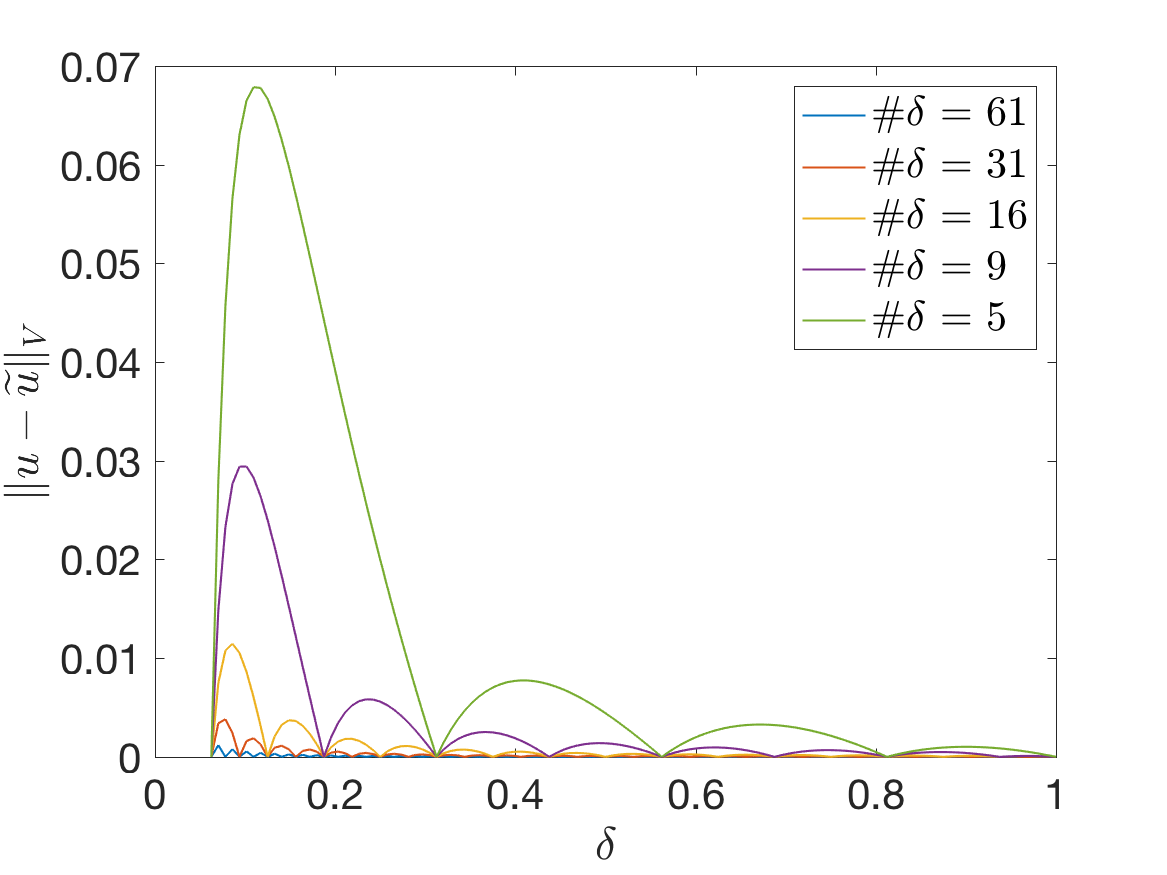}
\includegraphics[width=0.32\textwidth]{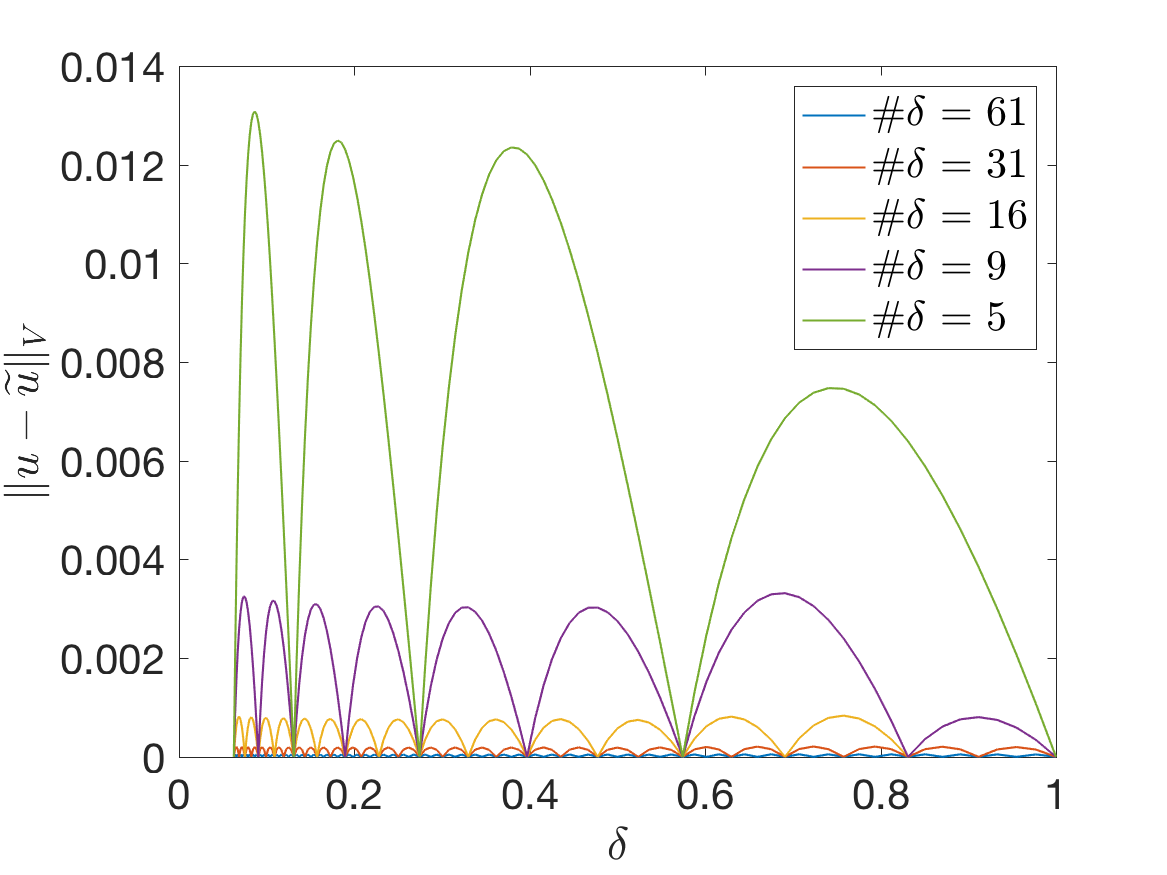}
\caption{Pointwise error $\norm{u(\delta)-\widetilde{u}(\delta)}_\V$ over the interval $\delta \in \P^\delta=[\deltamin,\deltamax]$ for \emph{Case~1} (left) and \emph{Case~2} (middle) and \emph{Case~2} using an graded refinement~\eqref{delta_graded} (right); $s=0.5$.}
\label{fig:1}
\end{figure}

The error convergence in terms of $\Delta\delta$ for \emph{Case~1} and \emph{Case~2} using uniform and adaptive refinements for $\kerneld$ in~\eqref{kernel_approx_gen} is presented on Figure~\ref{fig:2}. We observe linear for \emph{Case~1} and quadratic for \emph{Case~2} order of convergence for both type of refinements, which is in agreement with the theoretical results from Proposition~\ref{prop:error_sol_affine_delta}. We remark that for the chosen parameter $s=0.5$, the theory guarantees only $u \in \HD{1-\varepsilon}$ for arbitrarily small $\varepsilon > 0$, however, since the computation is based on the finite element solution, it still holds $u \in H^1_0(\Omega)$ on the discrete level, and the corresponding norm can depend only very weakly on the mesh size $h$.
\begin{figure}[htbp]
 \centering
\includegraphics[width=0.45\textwidth]{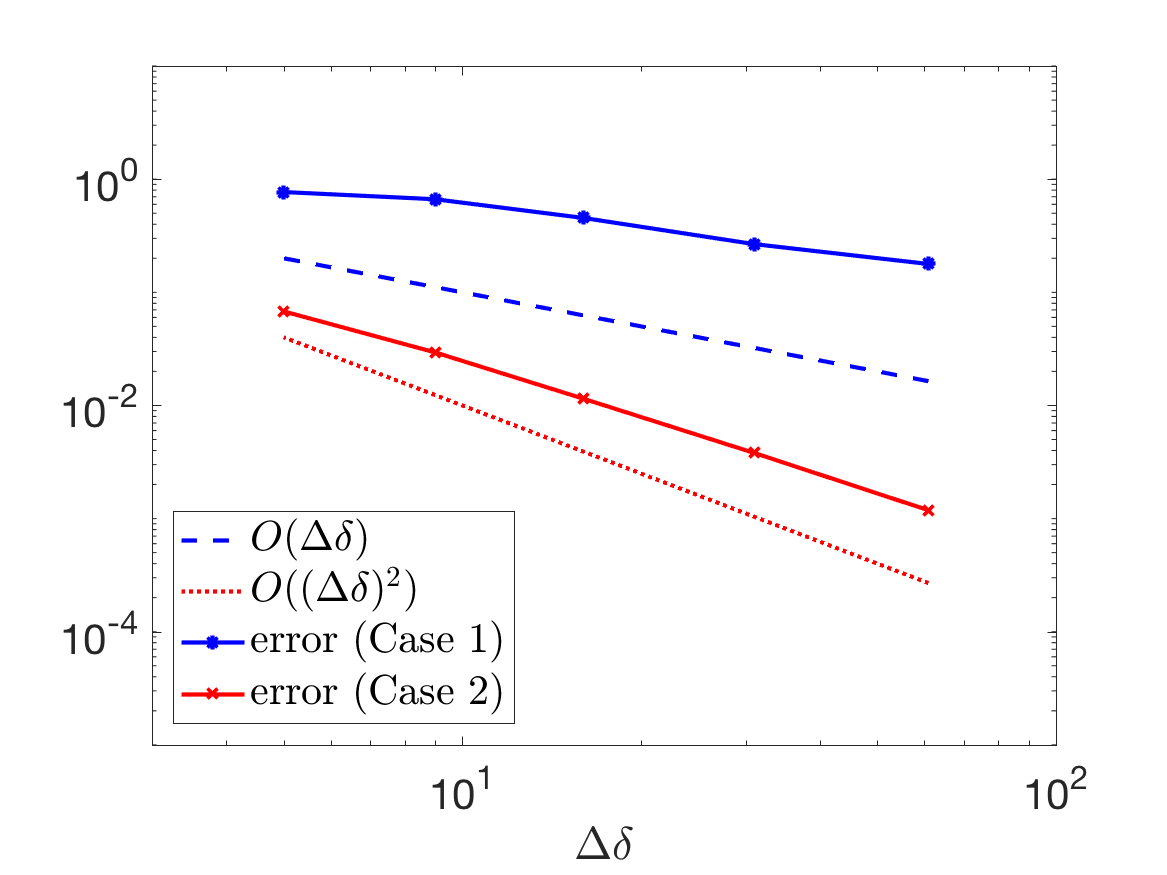}
\includegraphics[width=0.45\textwidth]{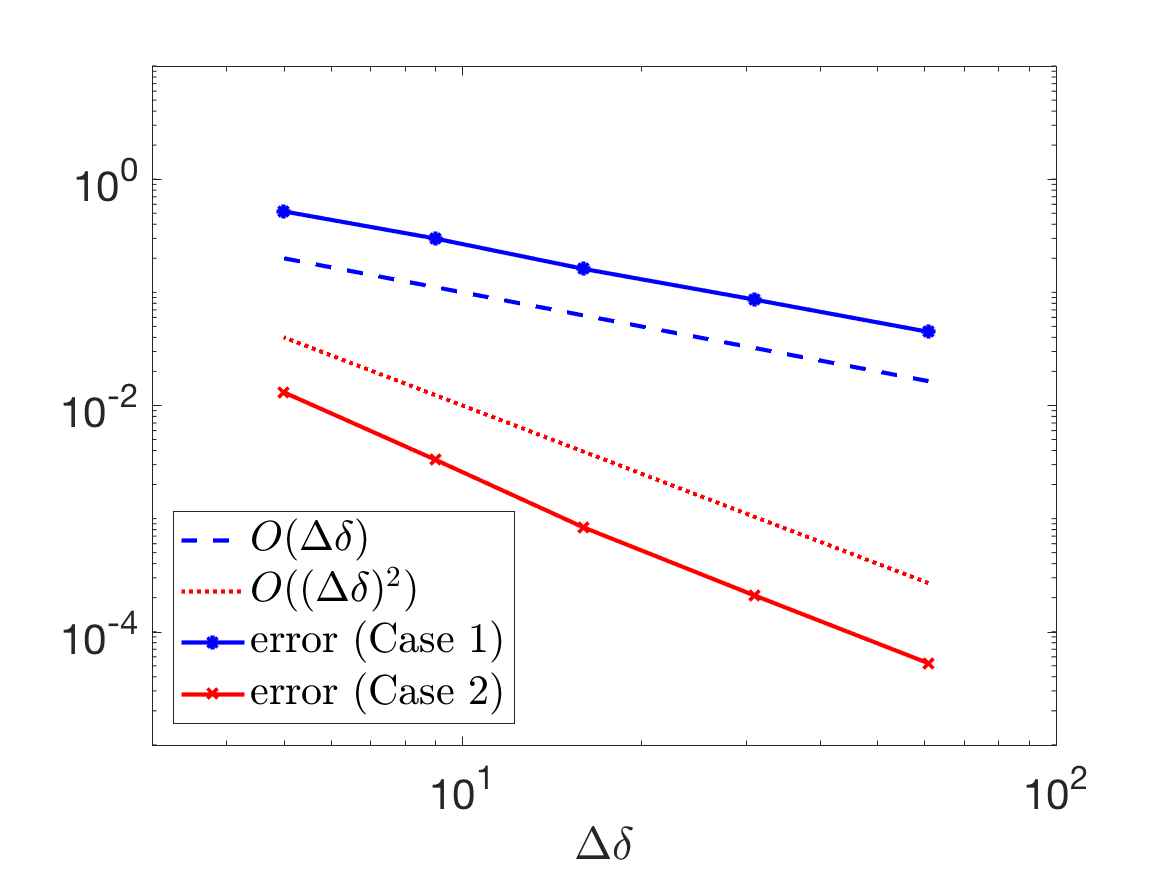}
\caption{Convergence of $\max_{\delta\in\P_{\rm train}^{\delta}}\norm{u(\delta)-\widetilde{u}(\delta)}_{\V}$ with respect to the number of points $K$ using uniform refinement (left) and graded refinement~\eqref{delta_graded} (right); $s=0.5$. }
\label{fig:2}
\end{figure}

In a similar manner, for the parameter $s$ we investigate the error 
between the discrete solution $u(s)$ of the original problem~\eqref{var_form_s} and the solution $u^{\rho}(s)$ of the regularized problem~\eqref{detailed_reg} with the regularization parameter $\rho = 2 C(\delta)\sigma^{M+1}$.
Here, we implement the regularization term with parameter $\s = \smin + 1/4 = 7/12$, which corresponds to a choice of $\varepsilon$ below the machine precision.
We note that this leads to $\sigma = (\smax - \smin)/(2\heps(\smax)) = 1/2$.
In Figure~\ref{fig:3}, the error is plotted for different number of Chebyshev points $M$ used in~\eqref{bilform_sapprox}. We clearly observe the exponential convergence of the error, which numerically confirms the error bound~\eqref{error_reg_detailed}. 
\begin{figure}[htbp]
 \centering
\includegraphics[width=0.5\textwidth]{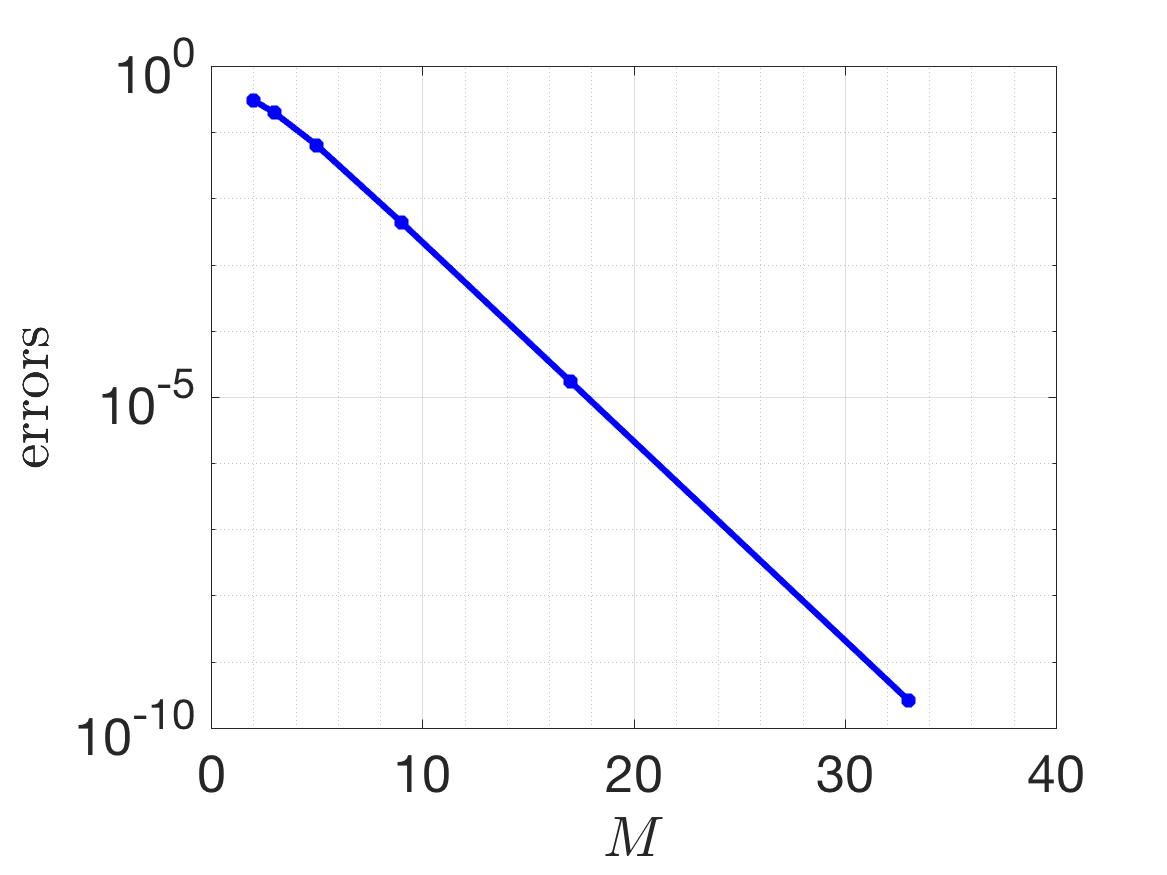}
\caption{Convergence of $\max_{s\in\P_{\rm test}^{s}}\norm{u(s)-{u}^\rho(s)}_{\VD{s}}$ for $\delta=0.25$.}
\label{fig:3}
\end{figure}

\subsection{Reduced basis approximation}
Finally, we investigate numerically the reduced basis approximation for $\delta$ and $s$. Using the greedy algorithm, based on the true error criterion, we iteratively construct the reduced basis spaces. In Figure~\ref{fig:4} we plot the convergence of the RB error for $\delta$ over the test set $\P^{\delta}_{\rm test}$ for different dimensions of the RB space $N$ and numbers of interpolation points $K$. That is, the error $\max_{\delta\in\P_{\rm test}^{\delta}}\norm{u(\delta)-{{u}}_N(\delta)}_{\VD{s}}$, where $u(\delta)$ and ${u}_N(\delta)$ are the discrete solutions of~\eqref{nonl_linear_var} and~\eqref{eq:rb_problem_delta}, respectively. Here, we set $s=0.5$.

Similarly, we consider the problem with the parameter $s$ for different values of $N$ and $M$. Concerning the choice of $\s$ and $\rho$, we follow the same settings as in the previous section. Here, we plot the relative error $\max_{s\in\P_{\rm test}^{s}}\norm{u(s)-{u}^\rho_N(s)}_{\VD{s}}/\norm{u_N^{\rho}(s)}_{\VD{s}}$.
\begin{figure}[htbp]
 \centering
\includegraphics[width=0.45\textwidth]{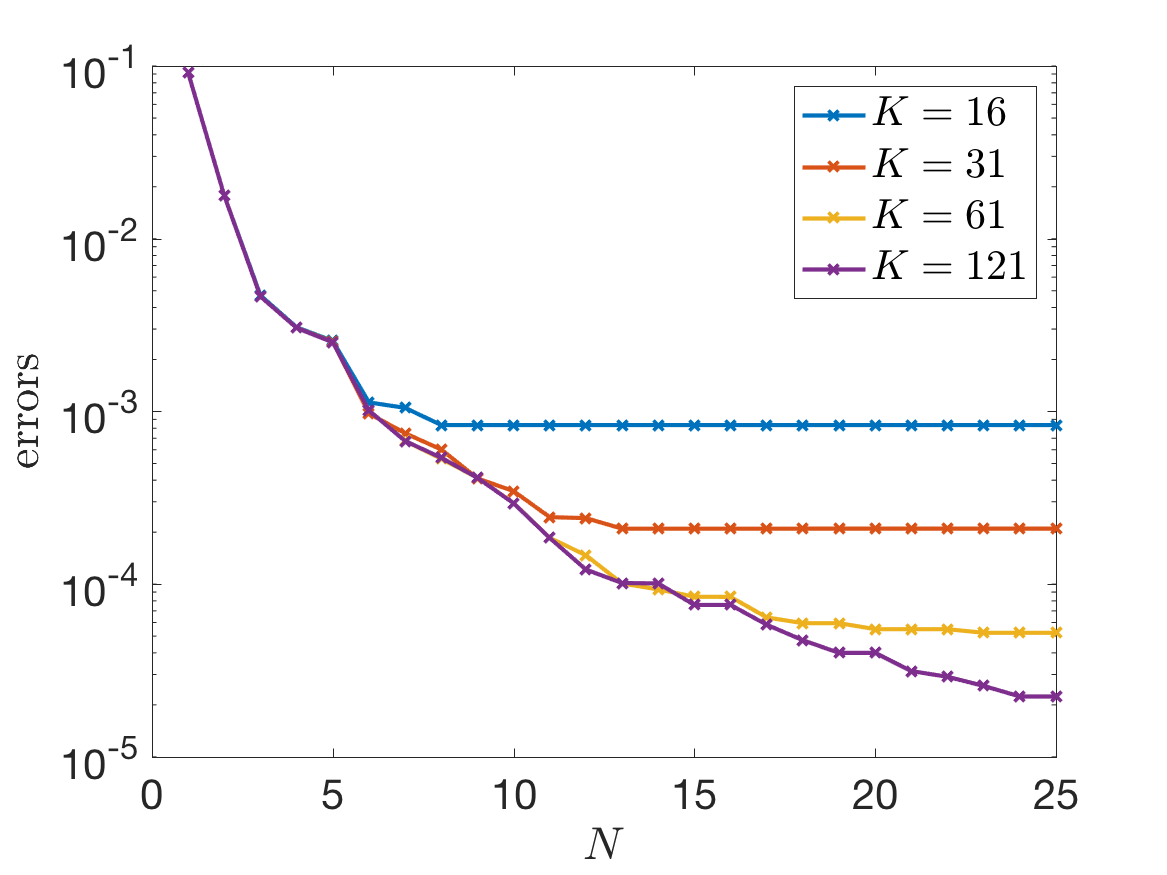}
\includegraphics[width=0.45\textwidth]{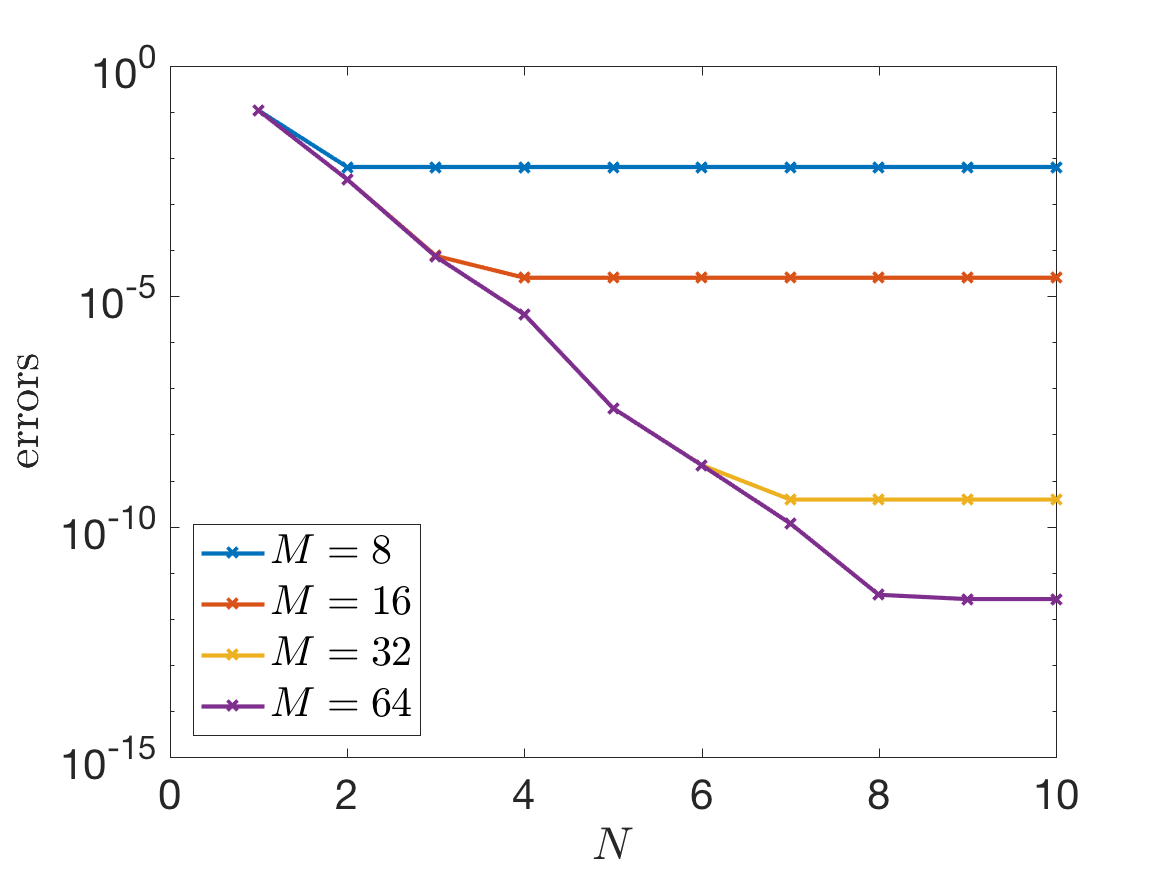}
\caption{Convergence of the reduced basis approximation for $\delta$ (left) and for $s$ (right). }
\label{fig:4}
\end{figure}
From both plots we observe a rapid convergence of the RB error. However, while we can clearly see the exponentially decaying behavior of the error in the parameter $s$, for the case of $\delta$ the convergence is slower, that could be explained by the reduced parameter regularity of the solution. In the case that the number of points $K$ (resp.\ $M$) is too low, the error can not be reduced below the error caused by the affine approximation. In the case of $s$, machine precision is reached already for $M=64$.



\newpage
\appendix

\section{Auxilliary estimates}
\label{app:aux}
\begin{proposition}\label{appx: prop1}
For $\delta\in(0,\infty)$, $k=1,2,\dots$ and $\alpha>0$, we have the following estimate
\begin{equation}
\label{estimate_log_k}
\sup_{\xi\in[0,\delta]}{\xi^{\alpha}}{|\log^k(\xi)}|
\leq \left(\frac{k}{e\alpha}\right)^k + \delta^{\alpha}(\log(\delta))_+^k\leq k! \left(\frac{1}{e\alpha^{k}} + \delta^{\alpha+1}\right).\nonumber
\end{equation}
\end{proposition}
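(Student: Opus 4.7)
The plan is to handle the two inequalities separately. For the first inequality, I would split the domain of optimization $(0,\delta]$ into the subintervals $(0,\min(1,\delta)]$ and $[1,\delta]$ (the latter only appearing if $\delta > 1$), treat the two candidate maxima, and then bound the overall supremum by their sum.

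On $(0,\min(1,\delta)]$, where $|\log\xi| = -\log\xi$, I would differentiate $g(\xi) := \xi^{\alpha}(-\log\xi)^k$ to obtain
\[
g'(\xi) = \xi^{\alpha-1}(-\log\xi)^{k-1}\bigl[\alpha(-\log\xi) - k\bigr],
\]
which vanishes at $\xi^\star = e^{-k/\alpha}$. Substituting back gives $g(\xi^\star) = (k/(e\alpha))^k$; since $g(0^+)=0=g(1)$ and the sign of $g'$ shows $\xi^\star$ is a maximum, this is the sup on $(0,1]$. On $[1,\delta]$ (when $\delta>1$), the function $h(\xi):=\xi^{\alpha}(\log\xi)^k$ has $h'(\xi) = \xi^{\alpha-1}(\log\xi)^{k-1}[\alpha\log\xi + k] \geq 0$, so it is monotone increasing and attains its maximum $\delta^{\alpha}(\log\delta)^k$ at $\xi=\delta$. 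Combining the two bounds and observing that both terms are non-negative yields
\[
\sup_{\xi\in(0,\delta]} \xi^{\alpha}|\log\xi|^k \leq \Bigl(\tfrac{k}{e\alpha}\Bigr)^k + \delta^{\alpha}(\log\delta)_+^k.
\]

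For the second inequality, I would prove the two term-by-term bounds. The estimate $(k/(e\alpha))^k \leq k!/(e\alpha^k)$ is equivalent to $k^k \leq e^{k-1} k!$, which I would prove by induction on $k$: the base case $k=1$ is trivial, and for the step I would use $(1+1/k)^k \leq e$ to get $(k+1)^{k+1} \leq (k+1)\,e\,k^k$, so $e^{k-1}k! \geq k^k$ implies $e^k (k+1)! \geq (k+1)^{k+1}$. For the second piece, $\delta^{\alpha}(\log\delta)_+^k \leq k!\,\delta^{\alpha+1}$ is trivial if $\delta\leq 1$, and for $\delta>1$ it reduces to $(\log\delta)^k \leq k!\,\delta$, which follows at once from the Taylor series identity $\delta = e^{\log\delta} = \sum_{j\geq 0}(\log\delta)^j/j! \geq (\log\delta)^k/k!$.

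I do not foresee a genuine obstacle: the whole argument is elementary calculus plus a short induction. The only step requiring some care is verifying the sign of the derivative $g'$ near the endpoints of $(0,1]$ to confirm that $\xi^\star = e^{-k/\alpha}$ is indeed a maximum and that no maximum lies at $\xi = \delta$ when $\delta < e^{-k/\alpha}$; in that corner case one checks directly that $g(\delta) = \delta^{\alpha}(-\log\delta)^k \leq (k/(e\alpha))^k$ because $g$ is increasing on $(0,\xi^\star)$.
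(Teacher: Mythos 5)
Your proof is correct and follows essentially the same route as the paper: split the supremum at $\xi=1$, locate the interior maximum at $\xi^\star=e^{-k/\alpha}$ by differentiation, use monotonicity on $[1,\delta]$, and then bound the two resulting terms by $k!$ separately. The only immaterial differences are that you prove the Stirling-type bound $k^k\le e^{k-1}k!$ by induction where the paper simply invokes $(k/e)^k e\le k!$, and you obtain $(\log\delta)^k\le k!\,\delta$ from the exponential series $\delta=e^{\log\delta}\ge(\log\delta)^k/k!$ rather than from the paper's integral bound $\Gamma(k+1)\ge c^k e^{-c}$ with $c=\log\delta$.
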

\begin{proof}
For $\delta\in(0,1)$, $k=1,2,\dots$, let $f(\xi):={\xi^{\alpha}}{|\log^k(\xi)|}$. Then, we can express
\begin{equation}
\sup_{\xi\in[0,\delta]}f(\xi)\leq\sup_{\xi\in[0,1]}f(\xi)+\sup_{\xi\in[1,\delta]}f(\xi).\label{appx:proof1}
\end{equation}
We analyze each term on the right-hand side of~\eqref{appx:proof1} separately. For the first term with $\xi \leq 1$, we obtain $f(\xi) = (-1)^k \xi^\alpha \log^k(\xi)$. Clearly, $\lim_{\xi\to 0}f(\xi) = f(1) = 0$ and to find a maximum, we compute the first derivative as
\begin{equation*}
f^{\prime}(\xi)= (-1)^k \left[\alpha\xi^{\alpha-1}\log^k(\xi) + \xi^\alpha\frac{k\log^{k-1}(\xi)}{\xi}\right]=\xi^{\alpha-1}|\log^k(\xi)|\left(\alpha+\frac{k}{\log(\xi)}\right).
\end{equation*}
Then, the equation $f^{\prime}(\xi)=0$ for $\xi\in(0,1)$ is solved exactly for $\xi^{\star}=e^{-k/\alpha}$.
From this, we obtain
\[\max_{\xi\in[0,1]}f(\xi)=f(\xi^{\star})=e^{-k}\norms{\log^k\left(e^{-k/\alpha}\right)}=\left(\frac{k}{e\alpha}\right)^k.\]
For the second term, we only have to consider the case $\delta > 1$ (otherwise the term is zero and \(\log(\delta)_+ = 0\)):
Since $f(\xi)$ is monotonically increasing function on the interval $[1,\delta]$, the last term in~\eqref{appx: prop1} is realized at $f(\delta)$, i.e., $\sup_{\xi\in[1,\delta]}f(\xi)=\delta^{\alpha}\log^k(\delta)$. 
This proves the first inequality.

Finally, from the fact that $k!=\Gamma(k+1)$, $k\in\mathbb{N}$, we can write
\begin{equation*}
k!=\Gamma(k+1)=\int_{0}^{\infty}t^{k}e^{-t}\d t\geq\int_{c}^{\infty}t^{k}e^{-t}\d t\geq c^{k}\int_{c}^{\infty}e^{-t}\d t=c^{k}e^{-c},
\end{equation*}
where $c$ is some positive constant. Then, taking $c=\log(\delta)$, $\delta>1$, we obtain that $k!\geq\log^k(\delta)\delta^{-1}$.
Finally, using the fact that $(k/e)^ke \leq k!$, we obtain the second inequality.
\end{proof}
\end{document}